\newtheorem{theorem}{Theorem}[section]
\newtheorem{lemma}[theorem]{Lemma}
\newtheorem{proposition}[theorem]{Proposition}
\newtheorem{definition}[theorem]{Definition}
\newtheorem{corollary}[theorem]{Corollary}
\newtheorem{remark}[theorem]{Remark}
\numberwithin{equation}{section}
\begin{document}
\title{Bott vanishing for elliptic surfaces}

\begin{abstract}
We explore Bott Vanishing for elliptic surfaces over $\mathbb{P}^1$. We show that Bott Vanishing is singnificantly affected by the geometric properties that whether there exists certain type of singular fibers on the elliptic fibration such as cuspidal fibers. For ample line bundle on the surface with large self-intersection, these geometric properties give criteria for Bott Vanishing.
\end{abstract}

\author{Chengxi Wang}
\address{UCLA Mathematics Department,
Box 951555, Los Angeles, CA 90095-1555} \email{chwang@math.ucla.edu}

\maketitle

{\bf Key words}: Bott Vanishing, elliptic fibration, singular fibers, coherent sheaf cohomology.

{\bf MSC classes}: 14F17 (primary), 14J27, 14C17, 14C20 (secondary)

\tableofcontents
\section{Introduction}\label{section1}
The cohomology spaces of several special vector bundles on a variety reveal the geometric and topological character of the variety. In particular, some geometric properties of the variety are relate to the vanishing of higher cohomology spaces which is useful when calculating the spaces of sections of the vector bundle. We say Bott Vanishing holds for a smooth projective variety $X$ if $$H^j(X, \Omega_X^i\otimes A)=0$$ for all ample line bundle $A$, all $i\geq 0$ and all $j>0$. Bott Vanishing is much broader than Kodaira vanishing which is the case when $i$ equals the dimension of $X$.

\smallskip

All smooth projective toric varieties are proved to satisfy Bott Vanishing in \cite{BC, BTLM, Mustata, Fujino}. For a Fano variety, Bott Vanishing fails if it is not rigid. Bott vanishing is implied by the liftability of the Frobenius morphism \cite{BTLM}, \cite[Proposition 7.1.4]{AWZ}. Then Bott vanishes fails for a lot of flag varieties not isomorphic to a product of projective spaces such as Grassmannins not isomorphic to projective plane and quadrics of dimension at lease $3$ \cite[section 4]{BTLM}. Thus there still exist rigid Fano variety not satisfying Bott Vanishing.

\smallskip

Totaro gives an answer in \cite{BTbott} for Achinger-Witaszek-Zdanowicz's question \cite[after Theorem 4]{AWZ} that Bott vanishing holds for the quintic del Pezzo surfaces which are non-toric rationally connected varieties. Sebasti\'an Torres generalizes this example by giving in \cite{STorres} that Bott vanishing holds for every stable GIT quotient of $(\mathbb{P}^1)^n$
by the action of $PGL_2$, over an algebraically closed field of characteristic zero.
In \cite{BTbott}, there are also investigations for irrationally connected varieties.
Bott vanishing fails for all $K3$ surfaces of degree less than $20$ or at least $24$ with Picard number $1$ \cite[Theorem 3.2]{BTbott}, which is obtained by using recent work of Ciliberto-Dedieu-Sernesi and Feyzbakhsh \cite{CDS, Feyzbakhsh}.

 \smallskip

 The geometric properties of a $K3$ surface $X$ leading to this failure is the existence of elliptic curves of low degree on $X$ or a (possibly singular) Fano $3-$fold containing the $X$ as a hyperplane section. Theorems $5.1$, $6.1$ and $6.2$ in \cite{BTbott} give geometric interpretation for this failure: the existence of certain special type of singular fibers of the elliptic fibration plays an important role in whether $H^1(X, \Omega^1_X\otimes A)$ is zero for an ample line bundle $A$ which is the key point for Bott Vanishing. These results inspire the research in this paper.

\smallskip

 In this paper, we explore Bott vanishing for elliptic surfaces $X$ with an elliptic fibration $\pi:X\rightarrow \mathbb{P}^1$. For elliptic surfaces, one cannot expect Bott vanishing to hold for all ample line bundles. But we want to determine as explicitly as possible which ample line bundles $A$ have $H^j(X, \Omega_X^i\otimes A)=0$ all $i\geq 0$ and all $j>0$ (Definition \ref{botvanipair}). The crucial issue is testing whether $H^1(X, \Omega^1_X\otimes A)$ is zero by Proposition \ref{pairbott}.

 \smallskip

 From the main results of this paper, Theorem \ref{Thr=1} and Theorem \ref{Thr>1}, we know the vanishing of $H^1(X, \Omega^1_X\otimes A)$ is determined by geometric properties of the fibration $\pi$, i.e, whether there exists certain type of singular fibers of $\pi$ such as cuspidal fibers.
In Theorem \ref{Thr=1} and Corollary \ref{cor>1A^2}, we prove when the self-intersection number $A^2$ is large enough, the geometric properties are actually sufficient and necessary condition for the vanishing of $H^1(X, \Omega^1_X\otimes A)$ which we can use as a criterion. In other words, we get geometric explanation for the vanishing of the cohomology spaces. The notable aspect of the paper is that our conclusions are about the elliptic surfaces over $\mathbb{P}^1$ with arbitrary $\beta$ and $r$, where $\beta=\chi(\mathcal{O}_{X})$ is the Euler characteristic of the structure sheaf of $X$ and $r$ is the intersection number $A\cdot E$ for a fiber $E$ of $\pi$.

\smallskip

Moreover, Section \ref{Ellis} contains some background for the elliptic surfaces. We also obtain several surjective maps between the space of sections of ample line bundles on the surfaces which is needed in the proof of main results in Section \ref{mainthb}. In section \ref{examples}, we construct several concrete elliptic surfaces over $\mathbb{P}^1$ in the case when $r=1,2,3,4$ and apply the main theorems to get criteria for Bott Vanishing for a pair $(X, A)$ (Definition \ref{botvanipair}).

\smallskip

\medskip

\noindent{\it Acknowledgements.} I would like to thank Burt Totaro for his guidance and helpful suggestions.

\section{Elliptic surfaces}\label{Ellis}
In this section, we give several conclusions about elliptic surfaces as preparations for proof of main theorems.

\smallskip

In this paper, a variety over a field $k$ is referred to as an irreducible reduced separated scheme of finite type over $k$. We take a curve to mean a variety of dimension $1$.
An elliptic surface is defined as follows.
\begin{definition}\label{defellip}
Let $C$ be a smooth complex projective curve and $X$ be a smooth complex projective surface. A elliptic fibration of $X$ over $C$ is an surjective morphism $\pi:X\rightarrow C$ such that
\begin{enumerate}
\item almost all fibers are smooth of genus $1$;
\item all fibers are reduced;
\item there is no $(-1)-$curve contained in a fiber.
\end{enumerate}
A elliptic surface over $C$ is defined to be a smooth complex projective surface $X$ with an elliptic
fibration $\pi:X\rightarrow C$.
\end{definition}

\begin{remark}
Condition $(2)$ is not a standard part of the definition of an elliptic surface, but we will assume it for this paper.
Condition $(3)$ in Definition \ref{defellip} means the elliptic fibration is minimal. A $(-1)-$curve (on a smooth projective surface) is a curve isomorphic to $\mathbb{P}^1$ with self-intersection $-1$. If there are $(-1)-$curves on a smooth projective surface, we can successively blow-down $(-1)-$curves to reduce the surface to a minimal model. An elliptic fibration may have some reducible fibers.
\end{remark}

We give descriptions for the singular fibers and the singular locus of an elliptic fibration in Remark \ref{degreefi} and Lemma \ref{degreeS}.

\begin{remark}\label{degreefi}
Let $\pi:X\rightarrow \mathbb{P}^1$ be an elliptic fibration and $S$ be the singular locus of $\pi$ which is a zero-dimensional scheme in $X$. From Kodaira's table of singular fibers \cite[$\mathrm{V.7}$]{BPV}, \cite[Corollary $5.2.3$]{CD}, the irreducible singular fiber of $\pi$ is of type $\mathrm{I_1}$ which is a nodal cubic or type $\mathrm{II}$ which is a cuspidal cubic; the reducible singular fiber is of type $\mathrm{III}$ which is two copies of $\mathbb{P}^1$ tangent at a poin, or $\mathrm{IV}$ which is three copies of $\mathbb{P}^1$ through a point, or $\mathrm{I_n}$ $(n\geq 2)$ which consists of a cycle of $n$ rational curves, meeting transversally (Figure \ref{type234I}). We list as follows in local analytic coordinates the polynomial defining $\pi$ and the degree of $S$ at each non-smooth point of different types:

\ \ \ \ \ \ \ \ \ \ \ \ \ \ \ \ \ \ $\pi$ is given by  \ \  \ \ \ \ \ \ \ \ \ \ \ \ \ \ $\mathrm{degree}(S)$ is
\begin{itemize}
  \item type $\mathrm{I_1}$:  \ $\pi(x,y)=x^2-y^2$, \ \ \ \ \ \ \ $dim_{\mathbb{C}}\mathbb{C}[x,y]/(2x,-2y)=1$;
  \item type $\mathrm{II}$:   \ $\pi(x,y)=x^2-y^3$, \ \ \ \ \ \ \ $dim_{\mathbb{C}}\mathbb{C}[x,y]/(2x, -3y^2)=2$;
  \item type $\mathrm{III}$: $\pi(x,y)=x(x-y^2)$,\ \ \ \ \ $dim_{\mathbb{C}}\mathbb{C}[x,y]/(2x-y^2, -2xy)=3$;
  \item type $\mathrm{IV}$: $\pi(x,y)=x(x^2-y^2)$,  \  \ \ $dim_{\mathbb{C}}\mathbb{C}[x,y]/(3x^2-y^2, -2xy)=4$;
  \item type $\mathrm{I_n}$: \  $\pi(x,y)=xy$,\ \ \ \ \ \ \ \ \ \ \ \ \ \ $dim_{\mathbb{C}}\mathbb{C}[x,y]/(x,y)=1$,
\end{itemize}
where $S$ is given by $\partial\pi/\partial x = \partial\pi/\partial y =0$ and contained in the fiber of $\pi$ at zero point since $\pi$ is quasi-homogeneous in these coordinates.
\begin{figure}[H]
  \includegraphics[width=0.99\textwidth]{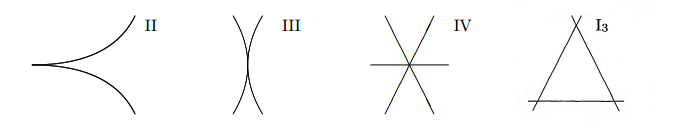}
  \caption{}
  \label{type234I}
\end{figure}
\end{remark}

The conditions $(2)$ and $(3)$ in Definition \ref{defellip} make it easy to compute the canonical bundle of an elliptic surface.

\begin{lemma}\label{Kx}
Let $\pi:X\rightarrow \mathbb{P}^1$ be an elliptic fibration. Then canonical bundle $$\mathcal{O}(K_X)=\pi^*(\mathcal{O}_{\mathbb{P}^1}(\chi(\mathcal{O}_{X})-2)),$$ where $\chi(\mathcal{O}_{X})$ is the Euler characteristic of the structure sheaf of $X$. In particular, if $\chi(\mathcal{O}_{X})>2$, then $X$ has Kodaira dimension $\kappa=1$.
\end{lemma}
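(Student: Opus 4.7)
The plan is to invoke Kodaira's canonical bundle formula for a relatively minimal elliptic fibration and simplify it using the hypotheses of Definition \ref{defellip}. For a relatively minimal elliptic fibration $\pi:X\to C$ (minimality guaranteed by condition (3)), the classical formula reads
$$\omega_X \;\cong\; \pi^*\bigl(\omega_C\otimes \mathcal{L}\bigr)\otimes \mathcal{O}_X\!\left(\sum_i (m_i-1)F_i\right),$$
where the $F_i$ are the multiple fibers of multiplicity $m_i$, and $\mathcal{L}=(R^1\pi_*\mathcal{O}_X)^{\vee}$ is a line bundle on $C$ whose degree equals $\chi(\mathcal{O}_X)$ (see e.g. \cite[V.12]{BPV}). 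Condition (2) of Definition \ref{defellip} forces every fiber to be reduced, so the correction term $\sum_i(m_i-1)F_i$ is zero.

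Specialising to $C=\mathbb{P}^1$, we have $\omega_{\mathbb{P}^1}\cong \mathcal{O}_{\mathbb{P}^1}(-2)$, and $\deg\mathcal{L}=\chi(\mathcal{O}_X)$, so the formula collapses to
$$\mathcal{O}(K_X)\;=\;\omega_X\;\cong\;\pi^*\mathcal{O}_{\mathbb{P}^1}\!\bigl(\chi(\mathcal{O}_X)-2\bigr),$$
which is the desired identification. The only real content beyond quoting the formula is the verification that condition (2) kills the multiple-fiber contribution; I would simply state this explicitly.

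For the Kodaira dimension claim, set $d=\chi(\mathcal{O}_X)-2>0$. Then by the formula and the projection formula,
$$h^0(X,nK_X)\;=\;h^0\bigl(\mathbb{P}^1,\mathcal{O}_{\mathbb{P}^1}(nd)\bigr)\;=\;nd+1,$$
which grows like $n$. Since $X$ is a surface this rules out $\kappa=2$, and since $h^0(X,nK_X)\to\infty$ it rules out $\kappa\in\{-\infty,0\}$. Hence $\kappa(X)=1$.

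The main obstacle is essentially bibliographic rather than mathematical: one has to pin down the correct form of the canonical bundle formula in the reference (and the sign/orientation convention for $\mathcal{L}$), and justify that conditions (1)–(3) place us exactly in its hypotheses. Once that is in place the lemma is immediate.
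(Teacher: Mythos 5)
Your proposal is correct and follows essentially the same route as the paper: the paper likewise quotes the canonical bundle formula from \cite[Corollary 12.3]{BPV}, uses reducedness of the fibers (condition (2)) to discard the multiple-fiber term, and deduces $\kappa=1$ from the growth of $h^0(X,nK_X)=h^0(\mathbb{P}^1,\mathcal{O}_{\mathbb{P}^1}(n(\chi(\mathcal{O}_X)-2)))$. Your version is marginally more explicit in ruling out $\kappa=2$ via the linear growth rate, but there is no substantive difference.
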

\begin{proof}
By Definition \ref{defellip}, all fibers of $\pi$ are reduced. Hence $\pi$ has no multiple fibers. Then the canonical line bundle $\mathcal{O}(K_X)$ equals pull back of a line bundle on $\mathbb{P}^1$ with degree $\chi(\mathcal{O}(X))-2\chi(\mathcal{O}_{\mathbb{P}^1})$ \cite[Corollary 12.3]{BPV}. Thus $\mathcal{O}(K_X)=\pi^*(\mathcal{O}_{\mathbb{P}^1}(\chi(\mathcal{O}_{X})-2))$. If $\chi(\mathcal{O}_{X})>2$, the dimension of $$H^0(X, \mathcal{O}(nK_X))=H^0(\mathbb{P}^1, \mathcal{O}_{\mathbb{P}^1}(n(\chi(\mathcal{O}_{X})-2)))$$ goes to infinity as the integer $n$ increases to infinity.
\end{proof}

\begin{lemma}\label{degreeS}
Let $\pi:X\rightarrow \mathbb{P}^1$ be an elliptic fibration and $S$ be the non-smooth locus of $\pi$. Then $$\mathrm{degree}(S)=e(X)=12\chi(\mathcal{O}_{X}),$$ where $\chi(\mathcal{O}_{X})$ is the Euler characteristic of the structure sheaf of $X$ and $e(X)$ is the topological Euler characteristic of $X$.
\end{lemma}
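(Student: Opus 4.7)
The statement has two equalities to prove, and I would attack them independently. The first, $\deg(S) = e(X)$, is a local-to-global computation using the classification of singular fibers already recorded in Remark~\ref{degreefi}. The second, $e(X) = 12\chi(\mathcal{O}_X)$, is essentially Noether's formula once we know $K_X^2 = 0$, which follows directly from Lemma~\ref{Kx}.

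For the first equality, I would use the topological additivity of the Euler characteristic together with the fact that a smooth fiber of $\pi$ is an elliptic curve and hence has $e = 0$. Letting $\Sigma \subset \mathbb{P}^1$ denote the finite set of critical values, the restriction of $\pi$ over $\mathbb{P}^1 \setminus \Sigma$ is a $C^\infty$ fiber bundle with fiber an elliptic curve, so its total space has $e = e(\mathbb{P}^1 \setminus \Sigma)\cdot 0 = 0$. Therefore
\begin{equation*}
e(X) \;=\; \sum_{s \in \Sigma} e(F_s).
\end{equation*}
It then suffices to check, fiber type by fiber type, that $e(F_s)$ equals the sum of the local degrees of $S$ at the non-smooth points of $F_s$ computed in Remark~\ref{degreefi}. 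For type $\mathrm{I}_1$, the nodal cubic has normalization $\mathbb{P}^1$ with two points glued, so $e = 1$, matching $\deg_{\mathrm{loc}}(S) = 1$; the cuspidal type $\mathrm{II}$ has $e = 2$ since the normalization map is a bijection from $\mathbb{P}^1$, matching $\deg_{\mathrm{loc}}(S) = 2$; types $\mathrm{III}$ and $\mathrm{IV}$ give $e = 2\cdot 2 - 1 = 3$ and $3\cdot 2 - 2 = 4$ by a CW decomposition, matching $3$ and $4$ respectively; and type $\mathrm{I}_n$ ($n \geq 2$) gives $e = 2n - n = n$ from its $n$ nodes, matching a total local degree of $n\cdot 1 = n$.

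For the second equality, I would invoke Noether's formula
\begin{equation*}
\chi(\mathcal{O}_X) \;=\; \frac{K_X^2 + e(X)}{12}.
\end{equation*}
By Lemma~\ref{Kx}, $\mathcal{O}(K_X) = \pi^*\mathcal{O}_{\mathbb{P}^1}(\chi(\mathcal{O}_X) - 2)$ is a pullback from a curve, so any two sections of a high multiple of $K_X$ arise from $\mathbb{P}^1$ and in particular $K_X^2 = \bigl(\pi^*\mathcal{O}_{\mathbb{P}^1}(1)\bigr)^2 \cdot (\chi(\mathcal{O}_X)-2)^2 = 0$ since $\pi^*\mathcal{O}_{\mathbb{P}^1}(1)$ is the class of a fiber and fibers have self-intersection zero. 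Substituting $K_X^2 = 0$ into Noether's formula yields $e(X) = 12\chi(\mathcal{O}_X)$.

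The main obstacle is essentially bookkeeping in the case analysis for $\sum e(F_s) = \deg(S)$: one must verify the match for each of the Kodaira types listed. Conceptually this is not hard since both invariants are intrinsically local at the singular points of $\pi$, but it requires producing the Euler characteristic of each singular fiber correctly, which I would do either via the explicit CW decomposition indicated above or by citing \cite[V.10]{BPV}. The equality of $e(F_s)$ with the local degree of $S$ is not a coincidence; it reflects the general principle that for an isolated hypersurface singularity of a map to a curve the Milnor number controls both, but for the purposes of this lemma a direct check suffices.
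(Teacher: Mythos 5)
Your proposal is correct and follows essentially the same route as the paper: reduce $e(X)$ to a sum of Euler numbers of singular fibers (the paper cites \cite[Theorem 6.10]{ScShi} where you justify the same formula directly via the fiber bundle over the complement of the critical values), match $e(F_s)$ type by type with the local degrees of $S$ from Remark \ref{degreefi}, and conclude with Noether's formula using $K_X^2=0$ from Lemma \ref{Kx}. Your per-type Euler numbers and the $K_X^2=0$ computation agree with the paper's, so there is no gap.
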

\begin{proof}
By Theorem 6.10 in \cite{ScShi}, the Euler number $e(X)=\sum_{E}e(E)$, where $e(E)$ is Euler number for the fiber $E$ of $\pi$. The sum is finite since if $E$ is smooth, the Euler number $e(E)=b_0-b_1+b_2=1-2+1=0$, where $b_i$ are Betti numbers of $E$.
If $E$ is of type $\mathrm{I_n}$ $(n\geq1)$, then $e(E)=n$.
If $E$ is cuspidal cubic (type $\mathrm{II}$), then $e(E)=2$. Also $e(E)=3$ if $E$ is of type $\mathrm{III}$, $e(E)=4$ if $E$ is of type $\mathrm{IV}$. Let $n_1,n_2,n_3,n_4$ be the numbers of singular points on fibers of type $\mathrm{I_n}$ $(n\geq1)$, $\mathrm{II}$, $\mathrm{III}$ and $\mathrm{IV}$ respectively. Since there is one singular point on each fiber of type $\mathrm{II}$, $\mathrm{III}$ and $\mathrm{IV}$  and there are $n$ singular points on each fiber of type $\mathrm{I_n}$ $(n\geq1)$. Thus $e(X)=n_1+2n_2+3n_3+4n_4$. Noether's formula shows that $12\chi(\mathcal{O}_{X})=K_X^2+e(X)$. We know $K_X^2=0$ by Lemma \ref{Kx}. Therefore $$n_1+2n_2+3n_3+4n_4=e(X)=12\chi(\mathcal{O}_{X}).$$ By Remark \ref{degreefi}, $S$ has degree $1$ at each singular point on the fiber of type $\mathrm{I_n}$ $(n\geq1)$, degree $2$ at the singular point on the fiber of type $\mathrm{II}$, degree $3$ at the singular point on the fiber of type $\mathrm{III}$, degree $4$ at the singular point on the fiber of type $\mathrm{IV}$,  which implies $\mathrm{degree}(S)=n_1+2n_2+3n_3+4n_4=e(X)=12\chi(\mathcal{O}_{X})$.
\end{proof}

\begin{lemma}\label{beta>0}
Let $\pi:X\rightarrow \mathbb{P}^1$ be an elliptic fibration and $\chi(\mathcal{O}_{X})$ be the Euler characteristic of the structure sheaf of $X$. Then $\chi(\mathcal{O}_{X})\geq 0$. In particular, we have $\chi(\mathcal{O}_{X})\geq1$ unless the elliptic fibration $\pi:X\rightarrow \mathbb{P}^1$ is isomorphic to the trivial fibration $\mathbb{P}^1\times E\rightarrow \mathbb{P}^1$ for some elliptic curve $E$.
\end{lemma}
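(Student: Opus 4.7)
The plan is to derive $\chi(\mathcal{O}_X) \geq 0$ immediately from Lemma~\ref{degreeS}, and then, in the equality case, to show that the fibration is first isotrivial and then actually trivial. For the first assertion, Lemma~\ref{degreeS} gives $12\chi(\mathcal{O}_X) = e(X) = \deg(S)$; since $S$ is a zero-dimensional closed subscheme of $X$, its degree is a non-negative integer, and hence $\chi(\mathcal{O}_X) \geq 0$.

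Now suppose $\chi(\mathcal{O}_X) = 0$. Then $\deg(S) = 0$, so $S = \emptyset$ and $\pi$ has no singular fibers. Combined with the reducedness condition in Definition~\ref{defellip}(2), $\pi$ becomes a smooth proper morphism whose fibers are all smooth elliptic curves. The $j$-invariant then defines a morphism $\mathbb{P}^1 \to \mathbb{A}^1$ (with no poles, because there are no singular fibers), and this map must be constant since $\mathbb{P}^1$ is proper and $\mathbb{A}^1$ is affine. Hence $\pi$ is isotrivial, with all fibers isomorphic to a fixed elliptic curve $E$.

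The main obstacle is to promote isotriviality to an actual trivialization $X \cong \mathbb{P}^1 \times E$. My plan is to work first with the relative Jacobian $J := \underline{\mathrm{Pic}}^0_{X/\mathbb{P}^1}$, which is a smooth isotrivial elliptic fibration over $\mathbb{P}^1$ carrying a (zero) section, and for which $\chi(\mathcal{O}_J) = 0$ by Noether's formula since $e(J) = 0$ and $K_J^2 = 0$. A minimal Weierstrass model $y^2 = x^3 + a(t)\,x + b(t)$ for $J$ has coefficients $a,b$ in $H^0(\mathbb{P}^1, L^{\otimes 4})$ and $H^0(\mathbb{P}^1, L^{\otimes 6})$, where $L := f_*\omega_{J/\mathbb{P}^1}$ has $\deg L = \chi(\mathcal{O}_J) = 0$; hence $L \cong \mathcal{O}_{\mathbb{P}^1}$, $a$ and $b$ are constants, and $J \cong \mathbb{P}^1 \times E$. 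Since $X$ is an \'etale torsor over $J \cong \mathbb{P}^1 \times E$, one concludes $X \cong \mathbb{P}^1 \times E$ from the classical theory of smooth elliptic surfaces over $\mathbb{P}^1$ (cf.~\cite[Ch.~V]{BPV}); alternatively, combining $\pi$ with the Albanese map $\alpha: X \to \mathrm{Alb}(X)$ --- an elliptic curve since $q(X) = 1 + p_g(X) = 1$ because $p_g(X) = h^0(\pi^*\mathcal{O}_{\mathbb{P}^1}(-2)) = 0$ by Lemma~\ref{Kx} --- produces a finite morphism $(\pi,\alpha): X \to \mathbb{P}^1 \times \mathrm{Alb}(X)$ whose degree one can verify to be $1$, giving the desired isomorphism.
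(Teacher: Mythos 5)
Your opening steps are fine, and they give a more self-contained derivation of the first assertion than the paper: Lemma \ref{degreeS} gives $12\chi(\mathcal{O}_X)=\deg(S)\geq 0$, and if $\chi(\mathcal{O}_X)=0$ then $S=\emptyset$, so all fibers are smooth, and constancy of the $j$-invariant gives isotriviality. (The paper instead quotes Proposition V.12.2 of \cite{BPV} to obtain this dichotomy directly.) The gap is in your final step, which is where the real content of the lemma sits. The claim that a torsor over $\mathbb{P}^1$ under the constant fibration $J\cong\mathbb{P}^1\times E$ must itself be $\mathbb{P}^1\times E$ is not formal: nontrivial such torsors do exist in the analytic category, namely the Hopf surfaces, which are exactly the non-product elliptic fiber bundles over $\mathbb{P}^1$ (Theorem V.5.4 of \cite{BPV}), and they have all fibers isomorphic to $E$. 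What excludes them is the projectivity of $X$ (a Hopf surface has $b_1=1$, hence is not K\"ahler and not projective), and your argument never invokes projectivity at this point; the phrase ``one concludes from the classical theory (cf. BPV Ch.\ V)'' is in effect the same citation the paper uses, except that the paper's proof consists precisely of making this exclusion explicit. As written, the torsor branch of your step assumes the statement to be proved.

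Your alternative Albanese branch can be completed, but the decisive claim --- that $(\pi,\alpha)\colon X\to\mathbb{P}^1\times\mathrm{Alb}(X)$ has degree $1$ --- is asserted rather than verified, and this is again exactly where projectivity must enter. The degree of $(\pi,\alpha)$ equals the degree of the induced isogeny $E_t\to\mathrm{Alb}(X)$ on a fiber, which is $1$ if and only if the restriction $H^1(X,\mathbb{Z})\to H^1(E_t,\mathbb{Z})$ is surjective; one gets this from $b_1(X)=2q=2$ together with the Leray spectral sequence for $\pi$ (the differential $H^0(\mathbb{P}^1,R^1\pi_*\mathbb{Z})\to H^2(\mathbb{P}^1,\mathbb{Z})$ must vanish because its kernel already has rank $2$), and it is exactly this step that fails for the non-K\"ahler torsors with $b_1=1$. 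So either supply that computation, or do what the paper does: quote Theorem V.5.4 of \cite{BPV} and rule out the Hopf surface case by noting it is not projective.
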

\begin{proof}
By Proposition $\mathrm{V}.12.2$ and the remark preceding it in \cite{BPV}, we have $\chi(\mathcal{O}_{X})\geq1$ unless all the smooth fibers of $\pi$ are isomorphic and singular fibers are of type $mI_0$ only.
By Definition \ref{defellip}, all fibers of $\pi$ are reduced. If singular fibers are of type $mI_0$ only, then all fibers are smooth. Thus when $\chi(\mathcal{O}_{X})=0$, all fibers of $\pi$ are isomorphic, i.e., $\pi$ is an elliptic fiber bundle. Theorem $\mathrm{V}.5.4$ in \cite{BPV} shows any elliptic fiber bundle over $\mathbb{P}^1$ is either a product or a Hopf surface. We know that any smooth projective variety has K\"ahler metric, thus its odd betti numbers have to be even numbers. Hopf surface is not projective since it has the first betti number $b_1=1$. Note that the elliptic surface $X$ we consider is projective (Definition \ref{defellip}).  Therefore, when $\chi(\mathcal{O}_{X})=0$, the elliptic fibration $\pi:X\rightarrow \mathbb{P}^1$ is isomorphic to the trivial fibration $\mathbb{P}^1\times E\rightarrow \mathbb{P}^1$ for some elliptic curve $E$. Thus the conclusion follows.

\end{proof}

We can obtain some information about the space of sections of a line bundle from its degree.

\begin{lemma}\label{h^0=r}
Let $F$ be a fiber of an elliptic fibration $\pi:X\rightarrow \mathbb{P}^1$. Suppose there is an ample line bundle
$L=\mathcal{O}(D)$ on $F$ with degree $r>0$. Then $h^0(F,L)=r$.
\end{lemma}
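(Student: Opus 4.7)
The plan is to compute $\chi(F,L)=r$ by Riemann--Roch on the curve $F$ and then show that $h^1(F,L)=0$ by combining Serre duality with the ampleness of $L$.

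First I would identify the dualizing sheaf of $F$. Since $F$ is an effective Cartier divisor in the smooth surface $X$, it is automatically Gorenstein, so adjunction
\[
\omega_F=(\omega_X\otimes \mathcal{O}_X(F))|_F
\]
applies. By Lemma \ref{Kx}, $\omega_X=\pi^*\mathcal{O}_{\mathbb{P}^1}(\chi(\mathcal{O}_X)-2)$, which restricts trivially to a fiber, and $\mathcal{O}_X(F)\cong \pi^*\mathcal{O}_{\mathbb{P}^1}(1)$ also restricts trivially to $F$. Hence $\omega_F\cong \mathcal{O}_F$, and in particular $p_a(F)=h^1(F,\mathcal{O}_F)=1$. (Since Definition \ref{defellip} guarantees that $F$ is reduced, and connectedness of fibers of $\pi:X\to \mathbb{P}^1$ follows from Stein factorization, $F$ is a connected reduced projective curve.) Riemann--Roch on $F$ then gives
\[
\chi(F,L)=\deg L+\chi(\mathcal{O}_F)=r+1-p_a(F)=r,
\]
so the problem reduces to proving $h^1(F,L)=0$.

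Next I would apply Serre duality on the Gorenstein curve $F$; together with $\omega_F\cong \mathcal{O}_F$ this yields $h^1(F,L)=h^0(F,L^{-1})$. Because $L$ is ample on $F$, Nakai--Moishezon (which on a curve is just positivity of the degree on each irreducible component) gives $\deg(L|_{F_i})>0$ for every component $F_i$, hence $\deg(L^{-1}|_{F_i})<0$. A nonzero global section of $L^{-1}$ on $F$ would restrict to a nonzero section of $L^{-1}|_{F_i}$ on at least one component; pulling this back to the normalization of $F_i$ would produce a nonzero section of a negative-degree line bundle on a smooth projective curve, which is impossible. Thus $h^0(F,L^{-1})=0$, so $h^0(F,L)=r$.

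The computation is essentially routine; the one point that calls for care is the handling of the singular and reducible Kodaira fibers (types $\mathrm{II}$, $\mathrm{III}$, $\mathrm{IV}$, and $\mathrm{I}_n$ for $n\geq 2$), where the Gorenstein property of $F$ is needed to invoke Serre duality and the section-vanishing step must be run component by component rather than directly on $F$.
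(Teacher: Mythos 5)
Your proof is correct and follows essentially the same route as the paper: Riemann--Roch on $F$ together with triviality of $\omega_F$ (arithmetic genus one), Serre duality to convert $h^1(F,L)$ into $h^0(F,L^{-1})$, and the observation that ampleness forces negative degree of $L^{-1}$ on every irreducible component, killing all sections. Your write-up merely supplies more detail (adjunction for $\omega_F\cong\mathcal{O}_F$, the Gorenstein hypothesis for duality, and the normalization argument on each component) than the paper's terser version.
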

\begin{proof}
 We write $F=C_1+\cdots+C_t$, where $C_1,\ldots,C_t$ are irreducible components of $F$. Since $L$ is ample, thus $-D\cdot C_i<0$ for $i=1,\ldots,t$. Therefore $h^0(C_i,L^{\vee})=0$ for $i=1,\ldots,t$. Since $F$ is a fiber of $\pi$, thus $\omega_F$ is trivial and $F$ has arithmetic genus one. So $h^1(F,L)=h^0(F,L^{\vee}\otimes\omega_F)=h^0(F,L^{\vee})\leq h^0(C_1,L^{\vee})+\cdots+h^0(C_t,L^{\vee})=0$. Then by Riemann-Roch, we have $h^0(F,L)=\mathrm{degree}_{F}L+g-1=r$.
\end{proof}

\begin{lemma}\label{Tag}
Let $C$ be a curve with arithmetic genus $g$ and $L$ be an invertible $O_{C}-$module. Assume $Y\subset C$ is a nonempty $0-$dimensional closed subscheme. If $\mathrm{deg}(L)\geq 2g-1+\mathrm{deg}(Y)$, then $L$ is globally generated and $H^0(C, L)\rightarrow H^0(Y, L)$ is surjective.
\end{lemma}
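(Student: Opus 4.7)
My plan is to reduce both assertions to a single cohomology vanishing. I claim that for any $0$-dimensional closed subscheme $Z \subseteq C$ with $\deg L \geq 2g - 1 + \deg Z$, one has $H^1(C, \mathcal{I}_Z \otimes L) = 0$. Granting this, surjectivity of $H^0(C, L) \to H^0(Y, L|_Y)$ follows by taking $Z = Y$ in the long exact cohomology sequence of $0 \to \mathcal{I}_Y \otimes L \to L \to L|_Y \to 0$. For global generation at a closed point $p \in C$ one instead takes $Z = \{p\}$: the required inequality $\deg L \geq 2g$ follows from the hypothesis because $Y$ is nonempty and so $\deg Y \geq 1$, and the analogous exact sequence then shows that $H^0(C, L) \to L\otimes k(p)$ is surjective.

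To prove the claim I use Serre duality on the projective Cohen-Macaulay curve $C$ (a reduced one-dimensional scheme is automatically Cohen-Macaulay, since an embedded associated prime cannot exist in a reduced local ring). The dualizing sheaf $\omega_C$ is torsion-free of rank one and degree $2g - 2$, and Serre duality identifies
$$H^1(C, \mathcal{I}_Z \otimes L)^{\vee} \;\cong\; \mathrm{Hom}_{\mathcal{O}_C}(\mathcal{I}_Z \otimes L,\, \omega_C).$$
Using the exact sequence above and additivity of Euler characteristic, $\mathcal{I}_Z \otimes L$ is a rank-one torsion-free sheaf of degree $\deg L - \deg Z \geq 2g - 1$.

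The decisive step is to show this Hom group vanishes by a degree comparison. On an irreducible reduced curve, any nonzero map $\varphi \colon \mathcal{F} \to \mathcal{G}$ between rank-one torsion-free coherent sheaves is automatically injective, since $\ker\varphi$ is torsion-free and if it had rank one then $\mathrm{im}\,\varphi$ would be a torsion subsheaf of the torsion-free sheaf $\mathcal{G}$, forcing $\varphi=0$. The cokernel of such an injection is then supported in dimension zero, so $\deg\mathcal{F}\leq\deg\mathcal{G}$. Applying this with $\mathcal{F} = \mathcal{I}_Z \otimes L$ and $\mathcal{G} = \omega_C$, a nonzero homomorphism would yield $\deg L - \deg Z \leq 2g - 2$, contradicting the hypothesis; hence the Hom group, and with it the desired $H^1$, vanishes. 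The main point requiring care is that $\mathcal{I}_Z$ need not be locally free at singular points of $C$; by phrasing the whole argument in terms of rank-one torsion-free sheaves and using Serre duality in its coherent-sheaf form, this difficulty is absorbed into the setup.
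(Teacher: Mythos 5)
Your argument is correct, but it is worth noting that the paper does not actually prove this lemma: its ``proof'' is a citation to the Stacks Project [Tag 0E3D], so you have supplied a genuine argument where the paper defers to a reference. Your route --- reducing both claims to $H^1(C,\mathcal{I}_Z\otimes L)=0$, then killing this group via Serre duality as $\mathrm{Hom}_{\mathcal{O}_C}(\mathcal{I}_Z\otimes L,\omega_C)$ together with the degree comparison for an injection of rank-one torsion-free sheaves --- is sound, and the delicate points are handled correctly: $\mathcal{I}_Z\otimes L$ is torsion-free of rank one of degree $\deg L-\deg Z$, a nonzero map between rank-one torsion-free sheaves on an \emph{integral} curve is injective with zero-dimensional cokernel, and $\deg\omega_C=2g-2$ with duality valid on a Cohen--Macaulay projective curve. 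Two caveats on scope: your injectivity step genuinely uses irreducibility (and reducedness and properness), which is fine under this paper's conventions, since a ``curve'' here is an integral projective variety of dimension one and every application of the lemma (to the cuspidal cubic, to $\mathbb{P}^1$-components of reducible fibers, and to normalizations) is to such a curve; but the cited Stacks Project statement is broader (reduced, possibly reducible proper curves), and there your degree comparison would fail as stated because a nonzero map of rank-one torsion-free sheaves need not be injective on a reducible curve. So your proof buys self-containedness at the cost of slightly narrower generality, which happens to be exactly the generality the paper uses.
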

\begin{proof}
See \cite[Tag 0E3D]{Stacks}.
\end{proof}

Let $F$ be a singular fiber of an elliptic fibration $\pi:X\rightarrow \mathbb{P}^1$ and $S_0$ be the singular subscheme of $F$ supported at the singular points of $F$. Assume $L$ is an ample line bundle on $F$ of degree $r$. There are results in \cite{BTbott} that the restriction $H^0(F, L)\rightarrow H^0(S_0, L)$ is surjective in some cases.
Lemma \ref{II}, \ref{III}, \ref{IV}, \ref{I_n} investigate the restriction map in several cases of certain type of singular fibers and value of $r$.

\begin{lemma}\label{II}
Let $F$ be a singular fiber of an elliptic fibration $\pi:X\rightarrow \mathbb{P}^1$ and $S_0$ be the singular subscheme of $F$ supported at the singular points of $F$. Assume $F$ is of type $\mathrm{II}$ and $L$ is an ample line bundle on $F$ of degree $r\geq 2$. Then the restriction $H^0(F, L)\rightarrow H^0(S_0, L)$ is surjective.
\end{lemma}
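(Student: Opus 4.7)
The plan is to split the argument into the cases $r \geq 3$ and $r = 2$, since Lemma \ref{Tag} applies only in the first range.

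First suppose $r \geq 3$. The fiber $F$ is irreducible of arithmetic genus $g = 1$ (as a fiber of an elliptic fibration its dualizing sheaf is trivial by adjunction, so $p_a(F) = 1$), and by Remark \ref{degreefi} the subscheme $S_0$ is supported at the unique cusp and has degree $2$. Hence $\deg L = r \geq 3 = 2g - 1 + \deg(S_0)$, and Lemma \ref{Tag} immediately yields surjectivity of $H^0(F, L) \to H^0(S_0, L)$.

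The substantial case is $r = 2$. By Lemma \ref{h^0=r} we have $h^0(F, L) = 2$, and since $S_0$ is artinian of length $2$ the target $H^0(S_0, L|_{S_0})$ also has dimension $2$. So surjectivity is equivalent to injectivity of the restriction, and I would show directly that $H^0(F, L \otimes \mathcal{I}_{S_0}) = 0$. To do so, pull back to the normalization $\nu \colon \tilde F \to F$. Since $F$ is a cuspidal cubic, $\tilde F \cong \mathbb{P}^1$ and $\nu$ is an isomorphism away from the cusp $p$. In the local coordinates of Remark \ref{degreefi} where $F$ is cut out by $x^2 - y^3$, the normalization is $t \mapsto (t^3, t^2)$, so that $\mathcal{O}_{F,p}$ is identified with $\mathbb{C}[[t^2,t^3]] \subset \mathbb{C}[[t]] = \mathcal{O}_{\tilde F, \tilde p}$, and the ideal $\mathcal{I}_{S_0}$ at $p$, given locally by $(x, y^2)$, becomes $(t^3, t^4) = (t^3)$ after pullback. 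Hence any $s \in H^0(F, L \otimes \mathcal{I}_{S_0})$ pulls back to a section of $\nu^* L$ of vanishing order at least $3$ at $\tilde p$. But $\nu^* L$ has degree $2$ on $\mathbb{P}^1$, so such a section must vanish identically; since $\nu$ is surjective and $F$ is integral, this forces $s = 0$.

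The main obstacle is precisely this borderline case $r = 2$, where the clean bound of Lemma \ref{Tag} fails by one. Handling it requires the explicit local computation at the cusp through the normalization, and exploits the fact that in $\mathcal{O}_{\tilde F, \tilde p}$ the ideal $\nu^{-1}(\mathcal{I}_{S_0})$ cuts out a scheme of length $3$, strictly longer than $S_0$ itself, which is what produces the contradiction with $\deg \nu^* L = 2$.
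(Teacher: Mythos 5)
Your proof is correct and takes essentially the same route as the paper: the range $r\geq 3$ is settled by Lemma \ref{Tag}, and the borderline case $r=2$ by pulling back to the normalization, where the ideal $(x,y^2)$ of $S_0$ becomes $(t^3)$, so a section vanishing on $S_0$ would vanish to order $3$ against $\deg \nu^*L=2$. The only (harmless) difference is bookkeeping: you get surjectivity from injectivity via the dimension count $h^0(F,L)=h^0(S_0,L)=2$, while the paper instead uses global generation from Lemma \ref{Tag} to produce a section nonvanishing at the cusp and then runs the same normalization computation on a basis of the sections vanishing there.
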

\begin{proof}
By computation in Remark \ref{degreefi}, the degree of $S_0$ is $2$.
Fiber $F$ is a curve with arithmetic genus $g=1$ since it is an irreducible and reduced fiber of the elliptic fibration. If $r\geq 3$, then $H^0(F, L)\rightarrow H^0(S_0, L)$ is surjective by Lemma \ref{Tag}.

\smallskip

If $r=2$, the line bundle is generated by global sections by Lemma \ref{Tag}, i.e., the line bundle is the base point free. So there is a global section $s_1$ of $L$ such that $s_1$ is nonzero at the singular point $p$. Let $T=\{s\in H^0(F, L)|s(p)=0\}$ be the subspace consists of sections of $L$ that vanish at $p$. Then Lemma \ref{h^0=r} shows $h^0(F, L)=r=2$. Hence space $T$ has dimension $2-1=1$. Let $s_2$ be a base element of $T$. Then the section $s_2$ vanishing at $p$ is not identically zero. Since the degree of $S_0$ is $2$, then $h^0(S_0, L)=2$. To show $H^0(F, L)\rightarrow H^0(S_0, L)$ is surjective, it is sufficient to prove the restrictions of the sections $s_1$ and $s_2$ to $S_0$ form a set of basis for $H^0(S_0, L)$. We only need to show the restriction $s_2$ to $S_0$ does not vanish on the whole scheme $S_0$ of degree $2$.

\smallskip

We consider the normalization $\sigma: \widetilde{F}\rightarrow F$ of $F$ with the unique point $q$ mapping to $p$, where $\widetilde{F}$ is isomorphic to $\mathbb{P}^1$.
By Remark \ref{degreefi}, locally the fiber $F$ is the curve defined by $x^2-y^3$ in $\mathbb{A}^2$, the singular locus $S_0$ is defined by ideal $\langle x, y^2\rangle$. Then $\sigma$ is given in local coordinates by $\sigma(t)=(t^3, t^2)$, and the point $q, p$ corresponed to $t=0$ and $(0,0)$ respectively. Then the pullback of sections $\sigma^{*}(x)$ and $\sigma^{*}(y^2)$ satisfying $\sigma^{*}(x)(t)=t^3$ and $\sigma^{*}(y^2)(t)=t^4$.

\smallskip

Assume that $s_2$ vanishes on $S_0$. Then locally $s_2\in \langle x, y^2\rangle$, which implies $\sigma^{*}(s_2)\in \langle \sigma^{*}(x), \sigma^{*}(y^2)\rangle$ locally. Therefore $\sigma^{*}(s_2)$ vanishes to order at least $3$ at $q$. Since $L$ is a line bundle of degree $r=2$ on $F$, the pullback $\sigma^{*}(L)$ is also a line bundle of degree $2$ on $\widetilde{F}$. A section of a line bundle of degree $2$ on $\widetilde{F}$ that vanishes at a point to degree at least $3$ must be identically zero, i.e, the section $\sigma^{*}(s_2)=0$. Hence the section $s_2=0$, which contradicts to that $s_2$ is a base element of $T$.

\end{proof}

\begin{lemma}\label{III}
Let $F$ be a singular fiber of an elliptic fibration $\pi:X\rightarrow \mathbb{P}^1$ and $S_0$ be the singular subscheme of $F$ supported at the singular points of $F$. Assume $F$ is of type $\mathrm{III}$ and $L$ is an ample line bundle on $F$ of degree $r\geq3$. Then the restriction $H^0(F, L)\rightarrow H^0(S_0, L)$ is surjective.
\end{lemma}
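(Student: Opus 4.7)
The plan is to split on the size of $r$, using Lemma \ref{Tag} as a hammer when possible and treating the boundary case $r = 3$ via a normalization argument analogous to Lemma \ref{II}. Since $F$ has arithmetic genus $g = 1$ and $\deg(S_0) = 3$ by Remark \ref{degreefi}, the case $r \geq 4$ is immediate from Lemma \ref{Tag}, as $\deg L = r \geq 4 = 2g - 1 + \deg(S_0)$.

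For the boundary case $r = 3$, I will introduce the unique length-$2$ closed subscheme $Z_2 \subset S_0$, which in the local coordinates of Remark \ref{degreefi} is $V(x, y^2)$. Lemma \ref{Tag} with $Y = Z_2$ gives surjectivity of $H^0(F, L) \to H^0(Z_2, L)$ because $\deg L = 3 = 2g - 1 + 2$. The short exact sequence $0 \to I_{Z_2/S_0} \otimes L \to L|_{S_0} \to L|_{Z_2} \to 0$ on $S_0$ shows that the kernel of $H^0(S_0, L) \twoheadrightarrow H^0(Z_2, L)$ is the one-dimensional skyscraper $I_{Z_2/S_0} \otimes L$ at $p$. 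So it suffices to produce a single section $s \in H^0(F, L)$ with $s|_{Z_2} = 0$ but $s|_{S_0} \neq 0$.

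To construct $s$, I will use the normalization $\nu : \widetilde{F} = \widetilde{C}_1 \sqcup \widetilde{C}_2 \to F$, with $\widetilde{C}_i \cong \mathbb{P}^1$ and $\nu^{-1}(p) = \{q_1, q_2\}$. The skyscraper $\nu_* \mathcal{O}_{\widetilde{F}} / \mathcal{O}_F$ at $p$ has length $2$ (computed from $p_a(F) = 1$ and $\chi(\mathcal{O}_{\widetilde{F}}) = 2$), so comparing Euler characteristics of $L$ and $\nu^* L$ yields $\deg \nu^*L|_{\widetilde{C}_1} + \deg \nu^*L|_{\widetilde{C}_2} = 3$; ampleness then forces the degree pair to be $\{1, 2\}$. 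From the local model $x(x - y^2) = 0$ in Remark \ref{degreefi}, one checks that a pair $(\tilde{s}_1, \tilde{s}_2) \in H^0(\widetilde{F}, \nu^*L)$ descends to a section of $L$ on $F$ if and only if the two $1$-jets at $q_1, q_2$ agree in the common local parameter $y$. Labeling so that $\deg \nu^*L|_{\widetilde{C}_1} = 1$, I take $\tilde{s}_1 = 0$ and $\tilde{s}_2 \in H^0(\widetilde{C}_2, \mathcal{O}(2))$ a nonzero section vanishing to order $2$ at $q_2$ (which exists since its order of vanishing matches its degree). Both $1$-jets vanish, so the pair descends to the desired $s \in H^0(F, L)$. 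The identification $\mathcal{O}_{S_0} \cong \mathbb{C}[y]/(y^3)$ via $x \mapsto y^2/2$ then lets me verify $s|_{Z_2} = 0$ and that $s|_{S_0}$ is a nonzero scalar multiple of $y^2$.

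The main obstacle is the $r = 3$ case: identifying the correct length-$2$ subscheme $Z_2 \subset S_0$, pinning down the gluing condition for type $\mathrm{III}$ correctly (tangential intersection gives a $2$-dimensional matching condition, richer than the transversal case), and exploiting ampleness to place the higher degree on the "right" component so that a section with a double zero at the node actually exists. The Lemma \ref{II} approach generalizes in spirit, but the reducibility of $F$ adds the wrinkle of treating the two components separately and tracking matching along the common tangent direction.
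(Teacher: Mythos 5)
Your argument for $r=3$ is in substance the paper's, but there is a genuine gap in how you invoke Lemma \ref{Tag}. Both your entire $r\ge 4$ case and your step ``$H^0(F,L)\to H^0(Z_2,L)$ is surjective because $\deg L=3=2g-1+2$'' apply Lemma \ref{Tag} to the fiber $F$ itself, using only the total degree of $L$. Lemma \ref{Tag} is stated for a curve, and in this paper a curve is by convention irreducible and reduced; a type $\mathrm{III}$ fiber is reducible, and the total-degree analogue of the lemma is false for reducible curves. It fails on the very fiber at hand: take $L$ of bidegree $(1,r-1)$ on $F=C_1\cup C_2$ and let $Y\subset C_1$ be the length-$3$ subscheme concentrated at a smooth point of $F$ lying on $C_1$. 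Then $\deg L=r\ge 4=2p_a(F)-1+\deg Y$, yet every section of $L$ on $F$ restricts to $C_1$ as a section of $\mathcal{O}_{\mathbb{P}^1}(1)$, so the image of $H^0(F,L)$ in $H^0(Y,L)\cong\mathbb{C}^3$ is at most $2$-dimensional and the restriction is not surjective. So no argument using only $\deg L$ and $\deg S_0$ can prove your $r\ge4$ case; one must use the component degrees $r_1+r_2=r$ (so some $r_i\ge 2$) together with the fact that $S_0$ sits at the tangency point. That is exactly what the paper's proof does: it describes $H^0(F,L)$ as pairs $(f,g)$ with matching $1$-jets at $p$ and applies Lemma \ref{Tag} only to the components $C_i\cong\mathbb{P}^1$. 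Your surjectivity onto $H^0(Z_2,L)$ at $r=3$ is true, but it too must be justified componentwise (arbitrary matching $1$-jets exist because $r_1,r_2\ge 1$), not by citing Lemma \ref{Tag} for $F$.

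Once that is repaired, the rest of your $r=3$ argument is sound and coincides with the paper's key step: your section $s$, vanishing identically on the degree-$1$ component and to order exactly $2$ along the other, is the paper's $s_3$, and the claim $s|_{Z_2}=0$, $s|_{S_0}\neq 0$ is verified there by expanding $s=ax+bx^2+cxy$ modulo $(x,y)^3$ (vanishing on $C_1$), deducing $a\neq 0$ from the order-exactly-$2$ vanishing along $C_2$, and restricting to the curve $x=\tfrac{1}{2}y^2$ containing $S_0$; you should spell this computation out, since it is the one delicate point, but your sketch is the right one. Note finally that the componentwise argument handles all $r\ge 3$ uniformly, so after the repair the case split between $r=3$ and $r\ge 4$ becomes unnecessary.
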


\begin{proof}
By remark \ref{degreefi}, in local coordinates near $p$, the fiber $F$ is defined by the equation $x(x-y^2)=0$ in $\mathbb{A}^2$ which is union of two components $C_1$ and $C_2$ isomorphic to $\mathbb{P}^1$. The two curves $C_1$ and $C_2$ are tangent at one point $p$ with intersection number $C_1\cdot C_2=2$ in the surface $X$. And $S_0$ supported at $p$ with degree $3$ is the closed subscheme of $\mathbb{A}^2$ defined by $x-\frac{1}{2}y^2=0, y^3=0$ locally.

\smallskip

Let the degree of ample line bundle $L$ on $C_i$ is $r_i$ for $i=1,2$ respectively, where $r_1, r_2$ are positive integers with $r_1+r_2=r\geq3$. Then one of $r_1, r_2$ is at least $2$. The space of sections $H^0(C_i, L)=H^0(\mathbb{P}^1, \mathcal{O}(r_i))$ for $i=1,2$. A section of $L$ on $F$ is given by a section of $\mathcal{O}(r_1)$ on $C_1$ and a section of $\mathcal{O}(r_1)$ on $C_2$ with value and first derivatives agree at $p$, that is, $$H^0(F, L)=\{(f,g)\in \oplus_{i=1}^2H^0(\mathbb{P}^1, \mathcal{O}(r_i)) \ | \ f(p)=g(p), f'(p)=g'(p)\},$$ where the restrictions $s|_{C_1}=f$ and $s|_{C_2}=g$.

\smallskip

Without loss of generality, we assume $r_2\geq 2$ since we can switch the names of $C_1$ and $C_2$ if necessary. Let $Y_2$ be the unique subscheme of degree $3$ in the smooth curve $C_2$ supported at $p$. Since $\mathrm{deg}_{C_2}(L)=r_2\geq 2$ and the genus of $C_2$ is zero, then $H^0(\mathbb{P}^1, \mathcal{O}(r_2))=H^0(C_2, L)\rightarrow H^0(Y_2, L)$ is surjective by Lemma \ref{Tag}. Hence
there exists a section $g$ of $\mathcal{O}(r_2)$ such that $g(p)$, $g'(p)$ and $g''(p)$ can be any value we want. Similarly, we apply Lemma \ref{Tag} to the unique subscheme of degree $2$ in $C_1$ supported at $p$. Then there exists a section $f$ of $\mathcal{O}(r_1)$ such that $f(p)$ and $f'(p)$ can be any value we want. Thus we can pick sections as follows.

\smallskip

Let $s_1$ be a section in $H^0(F, L)$ given by $f_1,g_1$ such that $$f_1(p)=g_1(p)\neq 0;$$ and $s_2$ be a section given by $f_2,g_2$ such that $$f_2(p)=g_2(p)=0   \text{ and } f_2'(p)=g_2'(p)\neq0;$$ and $s_3$ be a section given by $f_3,g_3$ such that $$f_3(p)=g_3(p)=0, f_3'(p)=g_3'(p)=0, f_3''(p)=0 \text{ and } g_3''(p)\neq 0,$$ where $(f_i, g_i)\in H^0(\mathbb{P}^1, \mathcal{O}(r_1))\oplus H^0(\mathbb{P}^1, \mathcal{O}(r_2))$ for $i=1,2,3$ and $f_3$ is the zero section.

\smallskip

Let $C_3$ be the curve given by $x-\frac{1}{2}y^2=0$ locally that contains $S_0$. Since $s_1(p)\neq 0$, $s_2(p)=0$ and the restriction of $s_2$ to $C_3$ has $s_2'(p)\neq 0$, the sections $s_1$ and $s_2$ is linear independent on $S_0$. It remains to show the restriction of $s_3$ to $C_3$ has $s_3''(p)\neq0$. If we show this, then $s_3$ is not identically zero on $S_0$. Also by $s_3(p)=0,s_3'(p)=0$, the sections $s_1, s_2, s_3$ are linear independent on $S_0$. Therefore the three sections restrict to a basis for the space of sections $H^0(S_0, L)=\mathbb{C}^3$, which complete the proof.

\smallskip

We choose local coordinates so that the curve $C_1$ is given by $x=0$ and $C_2$ is given by $x=y^2$. Since $s_3$ restricts to $0$ on $C_1$, then $s_3$ as a regular function on $\mathbb{A}^2$ has the form $$s_3=ax+bx^2+cxy  \ \   (\mathrm{mod} (x,y)^3).$$ Then the restriction of $s_3$ to $C_2$ is given by $$s_3|_{C_2}=ay^2+by^4+cy^3\equiv ay^2 \ \ (\mathrm{mod} y^3).$$ Since the second derivative of $s_3|_{C_2}$ equals $g_3''(p)\neq 0$, then $a\neq 0$. The restriction of $s_3$ to $C_3$ is given by
$$s_3|_{C_3}=\frac{a}{2}y^2+\frac{b}{2}y^4+\frac{c}{2}y^3\equiv \frac{a}{2}y^2 \ \ (\mathrm{mod} y^3).$$ Then the second derivative of $s_3|_{C_3}$ is $\frac{a}{2}\neq0$.

\end{proof}

\begin{lemma}\label{IV}
Let $F$ be a singular fiber of an elliptic fibration $\pi:X\rightarrow \mathbb{P}^1$ and $S_0$ be the singular subscheme of $F$ supported at the singular points of $F$. Assume $F$ is of type $\mathrm{IV}$ and $L$ is an ample line bundle on $F$ of degree $r\geq4$. Then the restriction $H^0(F, L)\rightarrow H^0(S_0, L)$ is surjective.
\end{lemma}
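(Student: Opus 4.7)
We mirror the proof of Lemma~\ref{III}, producing four sections in $H^0(F,L)$ whose restrictions to $S_0$ form a basis of the $4$-dimensional space $H^0(S_0,L)$. By Remark~\ref{degreefi}, near the singular point $p$ the fiber $F$ is locally the union of three smooth rational components $C_1 = \{x = 0\}$, $C_2 = \{x = y\}$, $C_3 = \{x = -y\}$ meeting transversally at $p$, and the singular subscheme $S_0$ is cut out by the Jacobian ideal $(3x^2 - y^2,\, 2xy)$. A short calculation identifies $\mathcal{O}_{S_0, p}$ with a $4$-dimensional $\mathbb{C}$-vector space with basis $\{1, x, y, x^2\}$, using the relations $y^2 = 3x^2$ and $xy = 0$. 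Set $r_i := \deg L|_{C_i} \geq 1$; the hypothesis $r = r_1 + r_2 + r_3 \geq 4$ forces at least one $r_i \geq 2$, and after relabelling we may assume $r_1 \geq 2$.

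Next we describe the gluing. The conductor at $p$, computed via the normalization of $F$, imposes three linear conditions on a triple $(f_1, f_2, f_3) \in \bigoplus_i H^0(C_i, L|_{C_i})$: namely $f_1(p) = f_2(p) = f_3(p)$ together with one linear relation on the first derivatives, which in the parametrization above reads $2 f_1'(p) = f_2'(p) + f_3'(p)$. Once these compatibility conditions are satisfied, Lemma~\ref{Tag} applied to each $C_i \cong \mathbb{P}^1$ lets us freely prescribe $(f_i(p), f_i'(p))$ (since $r_i + 1 \geq 2$), and additionally prescribe $f_1''(p)$ (since $r_1 + 1 \geq 3$).

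With these tools we build four sections: $s_0$ with values $1$ and first derivatives $0$ on each branch; $s_1$ with values $0$ and derivatives $(f_1', f_2', f_3') = (0, 1, -1)$; $s_2$ with values $0$ and derivatives $(1, 1, 1)$; and $s_3$ with $f_2 = f_3 = 0$ and $f_1$ chosen to vanish to order $2$ at $p_1$ with $f_1''(p_1) \neq 0$. Taylor-expanding each section and reducing modulo $(3x^2 - y^2,\, 2xy)$---in particular, a local representative of $s_3$ has the form $c(x^2 - y^2)$, which reduces to a nonzero multiple of $x^2$---we find that the four restrictions, expressed in the basis $\{1, x, y, x^2\}$, form an invertible matrix. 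Hence they span $H^0(S_0, L)$, whose dimension equals $\deg(S_0) = 4$.

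The main obstacle is the construction of $s_3$: we need some component $C_i$ for which $L|_{C_i}$ has degree at least $2$, so that a section can vanish to order $2$ at $p_i$ while having nonzero second-order jet. This is precisely where the hypothesis $r \geq 4$ enters, since $r_1 = r_2 = r_3 = 1$ would give $r = 3$. The remaining sections $s_0, s_1, s_2$ mirror the type~III construction of Lemma~\ref{III}, and the verification of invertibility reduces to a routine Taylor-series calculation modulo the Jacobian ideal.
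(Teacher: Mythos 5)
Your proof is correct and follows essentially the same route as the paper: describe $H^0(F,L)$ via triples of sections on the three branches subject to equal values at $p$ and one linear relation on first derivatives, use Lemma \ref{Tag} to prescribe the needed jets (with the hypothesis $r\geq 4$ forcing some $r_i\geq 2$), and verify that four explicit sections restrict to a basis of the $4$-dimensional space $H^0(S_0,L)$ by a local computation modulo the Jacobian ideal. The only difference is bookkeeping: you place the degree-$\geq 2$ component at $\{x=0\}$ so that the key section is a unit times $x^2-y^2\equiv -2x^2$ and the resulting matrix is triangular, whereas the paper places it at $\{x=y\}$ and pins down the analogous coefficient through the conditions $g_4''(p)\neq 0$ and $h_4''(p)=0$.
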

\begin{proof}
By remark \ref{degreefi}, in local coordinates near $p$, the fiber $F$ is defined by the equation $x(x-y)(x+y)=0$ in $\mathbb{A}^2$ which is union of three components $C_1$, $C_2$ and $C_3$ isomorphic to $\mathbb{P}^1$ intersecting at a point $p$. And $S_0$ supported at $p$ with degree $4$ is the closed subscheme of $\mathbb{A}^2$ defined by $x^2-\frac{1}{3}y^2=0, xy=0$ locally.

\smallskip

Let the degree of ample line bundle $L$ on $C_i$ is $r_i$ for $i=1,2,3$ respectively, where $r_1, r_2, r_2$ are positive integers with $r_1+r_2+r_3=r\geq4$. Then one of $r_1, r_2, r_3$ is not less than $2$.
The space of sections $H^0(C_i, L)=H^0(\mathbb{P}^1, \mathcal{O}(r_i))$ for $i=1,2,3$. A section $s\in H^0(F, L)$ is given by $(f,g,h)\in \oplus_{i=1}^3 H^0(\mathbb{P}^1, \mathcal{O}(r_i))$ such that $f(p)=g(p)=h(p)$ and $f'(p),g'(p),h'(p)$ satisfy a certain fixed linear equation, that is,  $e_1f'(p)+e_2g'(p)+e_3h'(p)=0$ for some fixed nonzero numbers $e_1, e_2, e_3$ depending on a choice of trivialization of the tangent spaces of $C_1, C_2, C_3$  at $p$. The restrictions $s|_{C_1}=f$, $s|_{C_2}=g$ and $s|_{C_3}=h$.

\smallskip

Without loss of generality, we assume $r_1\geq1,r_2\geq2$ and $r_3\geq1$ since we can switch the names of $C_1$, $C_2$ and $C_3$ if necessary.
As consequences of Lemma \ref{Tag} applied to the subschemes supported at $p$ of degree $2$ in $C_1$, of degree $3$ in $C_2$ and of degree $2$ in $C_3$ respectively, there exists a section $f$ of $\mathcal{O}(r_1)$ such that $f(p)$ and $f'(p)$ can be any value we want, and there exists a section $g$ of $\mathcal{O}(r_2)$ such that $g(p)$, $g'(p)$ and $g''(p)$ can be any value we want, and there is a section $h$ of $\mathcal{O}(r_1)$ such that $h(p)$ and $h'(p)$ can be any value we want. Thus we can pick sections as follows.

\smallskip

Let $s_1$ be a section in $H^0(F, L)$ given by $f_1,g_1,h_1$ such that $$f_1(p)=g_1(p)=h_1(p)\neq 0;$$ and $s_2$ be a section given by $f_2,g_2,h_2$ such that $$f_2(p)=g_2(p)=h_2(p)=0 \text{ and } f_2'(p)\neq0, g_2'(p)\neq0, h_2'(p)\neq0;$$ and $s_3$ be a section given by $f_3, g_3, h_3$ such that $$f_3(p)=g_3(p)=h_3(p)=0, f_3'(p)=0, g_3'(p)\neq0 \text{ and } h_3'(p)\neq0;$$ and $s_4$ be a section given by $f_4, g_4, h_4$ such $$f_4(p)=g_4(p)=h_4(p)=0, f_4'(p)=g_4'(p)=h_4'(p)=0 \text{ and } g_4''(p)\neq 0,$$
where $(f_i, g_i, h_i)\in \oplus_{i=1}^3 H^0(\mathbb{P}^1, \mathcal{O}(r_i))$, $e_1f_i'(p)+e_2g_i'(p)+e_3h_i'(p)=0$, and $f_4$ and $h_4$ are the zero sections.

\smallskip

We claim that the four sections $s_1, s_2, s_3, s_4$ restrict to a basis for the space of sections $H^0(S_0, L)=\mathbb{C}^4$. This completes the proof. Assume that $\sum_{i=1}^{4}d_is_i=0$. Since $s_1(p)\neq0$ and $s_i(p)=0$ for $i=2,3,4$, then $d_1=0$. Then by the derivative of the restriction of $s_2$ to $C_1$ is $f_2'(p)\neq 0$, and the derivative of the restriction of $s_i$ to $C_1$ is $f_i'(p)= 0$ for $i=3,4$, so $d_2=1$. Then by the second derivative of the restriction of $s_3$ to $C_2$ is $g_3'(p)\neq 0$, and the second derivative of the restriction of $s_4$ to $C_2$ is $g_4'(p)= 0$, so $d_3=0$. Hence $d_4s_4=0$ on $S_0$. It remains to show that $s_4$ is not identically zero on $S_0$.

\smallskip

We choose local coordinates so that the curve $C_1$ is given by $x=0$, $C_2$ is given by $x-y=0$ and $C_3$ is given by $x+y=0$. On a small neighborhood $U\subseteq C_1$ of $p$, the space of sections $H^0(U, L|_U)=\mathbb{C}[x,y]/\langle x\rangle$. Since $s_4$ restrict to zero on $C_1$,
then $s_4$ as a regular function on $\mathbb{A}^2$ has the form $$s_4=ax+bx^2+cxy  \ \   (\mathrm{mod} (x,y)^3).$$ The restriction of $s_4$ to $C_2$ is given by $$s_4|_{C_2}=ay+by^2+cy^2 \ \   (\mathrm{mod} y^3).$$ Since the derivative of $s_4|_{C_2}=g_4'(p)=0$, then $a=0$. Since the second derivative of $s_4|_{C_2}$ equals $g_4''(p)\neq0$, then $b+c\neq0$. The restriction of $s_4$ to $C_3$ is given by $$s_4|_{C_3}=bx^2-cx^2 \ \   (\mathrm{mod} x^3).$$ Since the second derivative of $s_4|_{C_3}$ equals $h_4''(p)=0$, hence $b=c$. Therefore $b\neq 0$. Since $S_0$ is given by $x^2-\frac{1}{3}y^2=0, xy=0$ locally, the corresponding coordinate ring $\mathbb{C}[x,y]/\langle x^2-\frac{1}{3}y^2, xy\rangle$ has basis $1, y, y^2, x$. The restriction of $s_4$ to $S_0$ is give by $s_4|_{S_0}=\frac{b}{3}y^2$. Since $b\neq 0$, the section $s_4$ is not identically zero on $S_0$.
\end{proof}

\begin{lemma}\label{I_n}
Let $F$ be a singular fiber of an elliptic fibration $\pi:X\rightarrow \mathbb{P}^1$ and $S_0$ be the singular subscheme of $F$ supported at the singular points of $F$. Assume $F$ is of type $I_n$ with $n\geq1$ and $L$ is an ample line bundle on $F$. Then the restriction $H^0(F, L)\rightarrow H^0(S_0, L)$ is surjective.
\end{lemma}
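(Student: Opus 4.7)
The plan is to exploit a feature of $I_n$ fibers not shared by types $\mathrm{II}$, $\mathrm{III}$, $\mathrm{IV}$: the singular subscheme $S_0$ is \emph{reduced}, consisting of $n$ simple nodes each of degree $1$ by Remark~\ref{degreefi}. Consequently $H^0(S_0,L)\cong\bigoplus_{i=1}^{n}L\otimes\kappa(p_i)$, and surjectivity just amounts to realizing any prescribed tuple of values at the nodes by a global section of $L$; unlike the previous three lemmas, no control over derivatives is required, which is what makes this case much cleaner.

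I would split into two subcases. For the reducible case $n\geq 2$, write $F=C_1\cup\cdots\cup C_n$ with $C_i\cong\mathbb{P}^1$ and $C_i\cap C_{i+1}=\{p_i\}$ (indices mod $n$). Ampleness of $L$ forces $r_i:=\deg_{C_i}L\geq 1$, so Lemma~\ref{Tag} applied on each $C_i$ to the reduced degree-$2$ subscheme $\{p_{i-1},p_i\}$ (with $g=0$, so the hypothesis $r_i\geq -1+2=1$ is satisfied) makes the evaluation $H^0(C_i,L|_{C_i})\twoheadrightarrow L|_{p_{i-1}}\oplus L|_{p_i}$ surjective. Given any target $(a_1,\ldots,a_n)$, I pick $s_i\in H^0(C_i,L|_{C_i})$ with $s_i(p_{i-1})=a_{i-1}$ and $s_i(p_i)=a_i$; the values match at every node, so the short exact sequence
$$0\to L\to \bigoplus_{i}L|_{C_i}\to \bigoplus_{i}L\otimes\kappa(p_i)\to 0$$
shows that the $s_i$ glue to the desired global section of $L$ on $F$. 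For the irreducible case $n=1$, $F$ is a nodal cubic with unique node $p$, and I pass to the normalization $\sigma\colon\widetilde F\cong\mathbb{P}^1\to F$ with $\sigma^{-1}(p)=\{q_1,q_2\}$. Sections of $L$ on $F$ are precisely sections $t$ of $\sigma^{*}L$ satisfying $t(q_1)=t(q_2)$, so Lemma~\ref{Tag} on $\mathbb{P}^1$ (again requiring only $r\geq 1$) lets me pick $t$ with $t(q_1)=t(q_2)=a$ for any $a\in L\otimes\kappa(p)$, giving the required section.

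I do not expect any serious obstacle. The one point of care is that applying Lemma~\ref{Tag} directly to $F$ itself would demand $r\geq 2g-1+\deg S_0 = n+1$, whereas ampleness only guarantees the bound $r\geq n$. The work-around is precisely the strategy sketched above: apply Lemma~\ref{Tag} componentwise (or on the normalization), where the arithmetic genus drops to $0$ and the weaker hypothesis $r_i\geq 1$ is automatic from ampleness.
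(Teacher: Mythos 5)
Your proposal is correct and takes essentially the same route as the paper: for $n\ge 2$ you prescribe arbitrary values at the nodes component by component and glue along the cycle (the paper does exactly this, only exhibiting explicit sections $x^{r_i}$, $y^{r_i}$, $0$ on each $\mathbb{P}^1$ instead of quoting Lemma \ref{Tag} componentwise), and for $n=1$ you use the normalization and Lemma \ref{Tag} just as the paper does. No gaps.
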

\begin{proof}
If $n=1$, fiber $F$ is a nodal cubic curve with a singular point $p$.
We consider the normalization $\widetilde{F}$ of $F$ with the normalization map $\sigma: \widetilde{F}\rightarrow F$, where $\widetilde{F}\cong \mathbb{P}^1$ is a smooth curve with genus $0$ and the inverse image $\sigma^{-1}(p)$ has two points $q_1, q_2$. We have $\mathrm{deg}_{F}(L)\geq1$ since $L$ is ample on $F$. Since $\sigma$ is normalization map, so $\mathrm{deg}_{\widetilde{F}}(\sigma^*L)=\mathrm{deg}_{F}(L)\geq1$. Since $\sigma^{-1}(p)$ is a $0-$dimensional closed subscheme of $\widetilde{F}$ of degree $2$, the restriction $H^0(\widetilde{F}, \sigma^*L)\rightarrow H^0(\sigma^{-1}(p), \sigma^*L)$ is surjective by Lemma \ref{Tag}. This implies that there are sections of $\sigma^*L$ on $\widetilde{F}$ taking any values at $q_1$ and $q_2$. The sections of $L$ on $F$ is the subspace of sections of $\sigma^*L$ on $\widetilde{F}$ that takes same value at $q_1$ and $q_2$, i.e., $$H^0(F, L)=\{s\in H^0(\widetilde{F}, \sigma^*L) \ | \ s(q_1)=s(q_2) \}.$$ Thus there is a section $s$ in $H^0(F, L)$ such that $s(p)\neq 0$ corresponding to a section in $H^0(\widetilde{F}, \sigma^*L)$ which takes same nonzero value at $q_1$ and $q_2$. Then $s$ restricts to a nonzero section in $H^0(S_0, L)$. The computations in Remark \ref{degreefi} show that the degree of $S_0$ is $1$. Thus $H^0(S_0, L)$ has dimension $1$, which implies $H^0(F, L)\rightarrow H^0(S_0, L)$ is surjective.

\smallskip

We now consider the case of $n\geq2$. By Remark \ref{degreefi}, the fiber $F$ consists of a cycle of $n$ curves $C_0,\ldots, C_{n-1}$ isomorphic to $\mathbb{P}^1$. The curves $C_i$ and $C_{i+1}$ meeting transversely at a point $Q_{i,i+1}$ for $i=0,\ldots, n-1$. We consider the index $i$ modulo $n$. The singular locus $S_0$ supported at $n$ points $Q_{0,1}, Q_{1,2}, \ldots, Q_{n-1,0}$ has degree $1$ at each point.

\smallskip

Let the positive integer $r_i$ be the degree of ample line bundle $L$ on $C_i$ for $i=0,\ldots, n-1$ respectively.
The space of sections $H^0(C_i, L)=H^0(\mathbb{P}^1, \mathcal{O}(r_i))$ for $i=0,\ldots, n-1$.
A section $s\in H^0(F, L)$ is given by $(f_i)_{i=1}^{n}\in \oplus_{i=0}^{n-1} H^0(\mathbb{P}^1, \mathcal{O}(r_i))$ such that $f_i(Q_{i,i+1})=f_{i+1}(Q_{i,i+1})$ for $i=0,\ldots, n-1$,
where the restrictions $s|_{C_i}=f_i$ for $i=0,\ldots, n-1$.
The space of sections $$H^0(S_0, L)=\oplus_{i=1}^{n}H^0(Q_{i,i+1}, L)=\mathbb{C}^{\oplus n}$$ has basis $e_i=(0,\ldots, 0, 1, 0,\ldots,0)$ for $ i=0,\ldots, n-1$, where $i-$th entry equals $1$ and the other entries equal $0$.

\smallskip

 For any two points on $\mathbb{P}^1$, there exists an automorphism mapping the two points to $[1:0]$ and $[0:1]$ respectively. The space of sections $H^0(C_i, L)=H^0(\mathbb{P}^1, \mathcal{O}(r_i))$ has elements $x^{r_i}, y^{r_i}$ and zero, where $r_i\geq 1$. The section $x^{r_i}$ maps $[1:0]$ and $[0:1]$ to $1, 0$ respectively. The section $y^{r_i}$ maps $[1:0]$ and $[0:1]$ to $0, 1$ respectively. The zero section maps both $[1:0]$ and $[0:1]$ to $0$. Thus there are three sections in $H^0(C_i, L)$: one maps $Q_{i-1,i}$ and $Q_{i,i+1}$ to $1, 0$ respectively, one maps $Q_{i-1,i}$ and $Q_{i,i+1}$ to $0, 1$ respectively, and one maps both $Q_{i-1,i}$ and $Q_{i,i+1}$ to $0$.

\smallskip

Let $f_i$ be the section in $ H^0(C_i, L)$ such that $f(Q_{i-1,i})=0$ and $f(Q_{i,i+1})=1$, $f_{i+i}$ be the section in $ H^0(C_{i+1}, L)$ such that $f(Q_{i,i+1})=1$ and $f(Q_{i+1,i+2})=0$. For all $j\neq i, i+1$, let $f_j$ be the section in $ H^0(C_j, L)$ such that $f(Q_{j-1,j})=f(Q_{j,j+1})=0$. Then the section $s_i\in H^0(F, L)$ given by $f_1, f_2, \ldots,f_n$ restricts to the base element $e_i$ in $H^0(S_0, L)$.  Therefore the restriction $H^0(F, L)\rightarrow H^0(S_0, L)$ is surjective.

\end{proof}

\section{Bott vanishing for elliptic surfaces}\label{mainthb}
In this section, we focus on proving main results: Theorem \ref{Thr=1}, Theorem \ref{Thr>1}, Corollary \ref{cor>1} and Corollary \ref{cor>1A^2}. Let $X$ be a smooth complex projective surface with an elliptic fibration $\pi:X\rightarrow \mathbb{P}^1$ and ample line bundle $A$ on $X$, $\beta=\chi(\mathcal{O}_{X})$ be the Euler characteristic of the structure sheaf of $X$ and $E$ be a fiber of $\pi$. Proposition \ref{pairbott} tells us that the vanishing of the cohomology space $H^1(X, \Omega^1_{X}\otimes A)$ plays a key role in testing the Bott Vanishing for ample line bundles $A$ on these elliptic surfaces. So our main results focus on investigating whether $H^1(X, \Omega^1_{X}\otimes A)$ vanishes.

\smallskip

Let $r$ be the intersection number $A\cdot E$.
When $r=1$, Theorem \ref{Thr=1} shows the cohomology space $H^1(X, \Omega^1_{X}\otimes A)$ vanishes if and only if $\pi$ has no fibers of type $\mathrm{II}$ and the self-intersection $A^2$ is larger than a number relate to $\beta$.
When $r\geq1$ and $A-\beta E$ is nef and big, Theorem \ref{Thr>1} states that $H^1(X, \Omega^1_{X}\otimes A)$ vanishes imples $\pi$ has no fibers of certain types and the converse holds with an additional condition about the dimension of a space of sections of the line bundle $A-(11\beta-1)E$. Then Remark \ref{L-Kx} shows that $A-(12\beta-2)E$ is nef and big is slightly stronger than the additional condition. Thus under the assumption $A-(12\beta-2)E$ is nef and big, Corollary \ref{cor>1} gives a criterion of whether $H^1(X, \Omega^1_{X}\otimes A)$ vanishes, i.e, $H^1(X, \Omega^1_{X}\otimes A)$ vanishes if and only if $\pi$ has no fibers of certain types. Moreover, computing the self-intersection $A^2$ may be easier than checking $A-(12\beta-2)E$ is nef and big. When $r\geq2$, under the assumption that the self-intersection $A^2$ is larger than a number relate to $\beta$ and $r$, Corollary \ref{cor>1A^2} displays a criterion of whether $H^1(X, \Omega^1_{X}\otimes A)$ vanishes, i.e, $H^1(X, \Omega^1_{X}\otimes A)$ vanishes if and only if $\pi$ has no fibers of certain types.

\smallskip

Now we give a formula for the Euler characteristic $\chi(X,\Omega_{X}^1\otimes A)$ for any elliptic fibration $X\rightarrow \mathbb{P}^1$.

\begin{proposition}\label{Euleromega}
Let $X$ be a smooth complex projective surface with an elliptic fibration $\pi:X\rightarrow \mathbb{P}^1$, $\beta=\chi(\mathcal{O}_{X})$ be the Euler characteristic of the structure sheaf of $X$, and $A$ be an ample line bundle on $X$. Then
$$\chi(X,\Omega_{X}^1\otimes A)=A^2-10\beta.$$
\end{proposition}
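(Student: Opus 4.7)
The plan is to compute $\chi(X, \Omega^1_X \otimes A)$ directly using Hirzebruch--Riemann--Roch for a rank-two vector bundle on the surface $X$, and then simplify using the special features of elliptic surfaces established earlier in the excerpt.

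First I would write down
\begin{equation*}
\chi(X, \Omega^1_X \otimes A) = \int_X \operatorname{ch}(\Omega^1_X)\cdot \operatorname{ch}(A)\cdot \operatorname{td}(X),
\end{equation*}
with $\operatorname{ch}(A) = 1 + A + \tfrac12 A^2$, $\operatorname{ch}(\Omega^1_X) = 2 + K_X + \tfrac12(K_X^2 - 2c_2(X))$, and $\operatorname{td}(X) = 1 - \tfrac12 K_X + \tfrac{1}{12}(K_X^2 + c_2(X))$. Here I use that $c_1(\Omega^1_X) = K_X$ and $c_2(\Omega^1_X) = c_2(X)$.

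Next I would extract the degree-two component. Multiplying out and collecting the codimension-two terms gives
\begin{equation*}
\chi(X,\Omega^1_X\otimes A) = A^2 + A\cdot K_X + \tfrac12(K_X^2 - 2c_2(X)) - A\cdot K_X - \tfrac12 K_X^2 + \tfrac16(K_X^2 + c_2(X)),
\end{equation*}
which collapses to
\begin{equation*}
\chi(X,\Omega^1_X\otimes A) = A^2 - c_2(X) + \tfrac16\bigl(K_X^2 + c_2(X)\bigr).
\end{equation*}

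Finally I would apply the two structural facts about the elliptic fibration. By Lemma~\ref{Kx}, $\mathcal{O}(K_X)$ is the pullback of a line bundle on $\mathbb{P}^1$, so $K_X^2 = 0$. Noether's formula $12\chi(\mathcal{O}_X) = K_X^2 + c_2(X)$ then gives $c_2(X) = 12\beta$ and $\tfrac16(K_X^2+c_2(X)) = 2\beta$. Substituting yields
\begin{equation*}
\chi(X,\Omega^1_X\otimes A) = A^2 - 12\beta + 2\beta = A^2 - 10\beta,
\end{equation*}
as claimed. There is no real obstacle here — the statement is a direct Hirzebruch--Riemann--Roch computation; the only thing one must be careful about is bookkeeping of signs when expanding $\operatorname{ch}\cdot\operatorname{ch}\cdot\operatorname{td}$, and recognising that the cancellation of the $K_X\cdot A$ and $K_X^2/2$ terms together with $K_X^2 = 0$ is precisely what removes any dependence on $A\cdot K_X$, leaving only $A^2$ and a multiple of $\beta$.
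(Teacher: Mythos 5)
Your computation is correct and follows essentially the same route as the paper: Hirzebruch--Riemann--Roch applied to $\Omega^1_X\otimes A$, reduction of the degree-two term to $A^2+\tfrac16 K_X^2-\tfrac56 c_2(X)$, and then $K_X^2=0$ (from Lemma \ref{Kx}) together with Noether's formula $12\chi(\mathcal{O}_X)=K_X^2+c_2(X)$ to conclude $A^2-10\beta$. No gaps; the bookkeeping of the $A\cdot K_X$ and $K_X^2$ cancellations matches the paper's expansion exactly.
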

\begin{proof}
By Riemann-Roch and $c_1(\Omega_{X}^1)=c_1(K_X)$, $c_1(T_X)=-c_1(K_X)$, where $K_X$ is the canonical bundle of $X$, we get
\begin{align*}
\chi(X,\Omega_{X}^1\otimes A)&=\int_{X}td(T_X)ch(\Omega_{X}^1\otimes A)=\int_{X}td(T_X)ch(\Omega_{X}^1)ch(A)
\\&=\int_{X}\big(1+\frac{1}{2}c_1(T_X)+\frac{c_1^2(T_X)+c_2(T_X)}{12}\big)
\\&\cdot\big(2+c_1(\Omega_{X}^1)+\frac{c_1^2(\Omega_{X}^1)-2c_2(\Omega_{X}^1)}{2}\big)\cdot\big(1+c_1(A)+\frac{c_1^2(A)}{2}\big)
\\&=\int_Xc_1^2(A)+\frac{1}{6}\int_Xc_1^2(K_X)-\frac{5}{6}\int_Xc_2(T_X)
\\&=A^2+\frac{1}{6}K^2_X-\frac{5}{6}\int_Xc_2(T_X).
\end{align*}
For elliptic surface $X$, we have $K_X^2=0$. Then by Noether's formula $12\chi(\mathcal{O}_{X})=K_X^2+e(X)$, we get $\int_Xc_2(T_X)=e(X)=12\beta$.
Therefore $\chi(X,\Omega_{X}^1\otimes A)=A^2-10\beta$.
\end{proof}

\begin{definition}\label{botvanipair}
Let $X$ be a smooth projective variety and $A$ be an ample line bundle on $X$. We say that $(X, A)$ satisfies Bott vanishing if $H^j(X, \Omega^{i}\otimes A)=0$ for all $i\geq 0$ and all $j>0$.
\end{definition}

\begin{remark}\label{Euleromega1}
We have $\chi(X,\Omega_{X}^1\otimes A)=h^0(X,\Omega_{X}^1\otimes A)-h^1(X,\Omega_{X}^1\otimes A)+h^2(X,\Omega_{X}^1\otimes A).$ By Proposition \ref{Euleromega}, if $A^2<10\beta$, then $\chi(X,\Omega_{X}^1\otimes A)$ is negative. This implies $h^1(X,\Omega_{X}^1\otimes A)$ is nonzero. Therefore Bott vanishing fails. If $A^2\geq10\beta$, then $\chi(X,\Omega_{X}^1\otimes A)$ is nonnegative. So there are still possibilities that Bott vanishing holds for $(X, A)$.
\end{remark}

The following Proposition tells us whether $H^1(X, \Omega^1_{X}\otimes A)$ vanishes is crucial for Bott Vanishing for $(X, A)$.

\begin{proposition}\label{pairbott}
Let X be a smooth complex projective surface with an elliptic fibration $\pi:X\rightarrow \mathbb{P}^1$ and $E$ be a fiber of $\pi$ and $\beta=\chi(\mathcal{O}_{X})$ be the Euler characteristic of the structure sheaf of $X$.
Assume $A$ is an ample line bundle on $X$ such that $A-(\beta-2)E$ is nef and big. Then $(X, A)$ satisfies Bott vanishing if and only if $H^1(X, \Omega^1_{X}\otimes A)=0$.
\end{proposition}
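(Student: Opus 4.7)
The forward implication is immediate: $H^1(X,\Omega^1_X\otimes A)=0$ is one of the vanishings required by Bott vanishing for $(X,A)$. All the work is in the reverse implication. Since $\dim X=2$, Bott vanishing amounts to the six statements $H^j(X,\Omega^i_X\otimes A)=0$ for $i\in\{0,1,2\}$ and $j\in\{1,2\}$. Assuming $H^1(X,\Omega^1_X\otimes A)=0$, I plan to dispatch the remaining five vanishings one by one, using Lemma \ref{Kx} together with standard vanishing theorems.

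First, the $i=2$ cases are Kodaira vanishing: since $A$ is ample, $H^j(X,\Omega^2_X\otimes A)=H^j(X,K_X\otimes A)=0$ for $j=1,2$. Next, the remaining $i=1$ case $H^2(X,\Omega^1_X\otimes A)=0$ is Akizuki--Nakano vanishing, valid whenever $p+q>\dim X$ and $A$ is ample (here $p=1$, $q=2$, $\dim X=2$).

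The $i=0$ cases are where the hypothesis $A-(\beta-2)E$ nef and big is used. By Lemma \ref{Kx}, $K_X=\pi^*\mathcal{O}_{\mathbb{P}^1}(\beta-2)\sim (\beta-2)E$, so $A\sim K_X+(A-(\beta-2)E)$. Applying Kawamata--Viehweg vanishing to this expression gives $H^1(X,A)=0$ immediately. For $H^2(X,A)$, by Serre duality
\[
H^2(X,A)\cong H^0(X,K_X-A)^{*}\cong H^0\bigl(X,-(A-(\beta-2)E)\bigr)^{*},
\]
and I will observe that a nef and big divisor $D$ on a surface has $D^2>0$, so if $-D$ were effective we would get $D\cdot(-D)=-D^2<0$, contradicting nefness of $D$; hence $H^0(X,-(A-(\beta-2)E))=0$ and $H^2(X,A)=0$.

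There is no genuine obstacle here; the statement is essentially a bookkeeping result once Lemma \ref{Kx} is in hand. The only subtlety is choosing which vanishing theorem to invoke for each of the six cohomology groups, and in particular recognizing that the hypothesis $A-(\beta-2)E$ nef and big is precisely what converts $A$ into $K_X$ plus a nef and big divisor, so that Kawamata--Viehweg (rather than merely Kodaira) applies to $H^j(X,A)$.
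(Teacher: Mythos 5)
Your proposal is correct and follows essentially the same route as the paper: Kodaira--Akizuki--Nakano vanishing disposes of all groups with $i+j>2$, and the hypothesis that $A-(\beta-2)E=A-K_X$ is nef and big feeds Kawamata--Viehweg to kill $H^j(X,A)$ for $j>0$, leaving only $H^1(X,\Omega^1_X\otimes A)$. Your only deviation is handling $H^2(X,A)$ by Serre duality plus the observation that the negative of a nef and big divisor is not effective, which is a harmless variant since Kawamata--Viehweg already gives vanishing for all $j>0$.
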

\begin{proof}
Kodaira-Akizuki-Nakano vanishing theorem (\cite[Theorem 4.2.3]{Lazarsfeld}) shows $H^j(X,\Omega^i\otimes L)=0$ for all ample line bundles $L$ on $X$ and all $i+j>\dim(X)$. Thus $H^j(X,\Omega^2\otimes L)=0$ for $j>0$ and $H^j(X,\Omega^1\otimes L)=0$ for $j\geq2$.

\smallskip

 Since $A-K_X=A-(\beta-2)E$ is nef and big, by Kawamata-Viehweg vanishing, the cohomology spaces $H^j(X, A)= 0$ for $j>0$. It remains to check whether $H^1(X,\Omega_{X}^1\otimes A)=0$. Then we get the conclusion.
\end{proof}

The singular locus of an elliptic fibration plays an important role in investigating the cohomology spaces.

\begin{proposition}\label{Simpose}
Let X be a smooth complex projective surface with an elliptic fibration $\pi:X\rightarrow \mathbb{P}^1$ and $E$ be a fiber of $\pi$ and $\beta=\chi(\mathcal{O}_{X})$ be the Euler characteristic of the structure sheaf of $X$.
Assume $A$ is an ample line bundle on $X$ such that $A-\beta E$ is nef and big. Then the cohomology space $H^1(X, \Omega^1_{X}\otimes A)=0$ is equivalent to that the non-smooth locus $S$ (as a scheme) of $\pi$ imposes linearly independent conditions on sections in $H^0(X,A+\beta E)$.
\end{proposition}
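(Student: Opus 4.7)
The plan is to realize the cokernel of the natural inclusion $\pi^*\Omega^1_{\mathbb{P}^1}\hookrightarrow\Omega^1_X$ as the twisted ideal sheaf $I_S\otimes\mathcal{O}_X(\beta E)$, and then to read off the statement from the resulting long exact sequences together with Kodaira-type vanishing.

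First I would establish the key short exact sequence
\begin{equation*}
0\longrightarrow \pi^{*}\Omega^1_{\mathbb{P}^1}\longrightarrow \Omega^1_X\longrightarrow I_S\otimes \mathcal{O}_X(\beta E)\longrightarrow 0.
\end{equation*}
The cokernel is $\Omega^1_{X/\mathbb{P}^1}$, whose first Chern class equals $c_1(\Omega^1_X)-c_1(\pi^{*}\Omega^1_{\mathbb{P}^1})=K_X+2E=\beta E$ by Lemma \ref{Kx}. To pin down the cokernel as $I_S\otimes\mathcal{O}_X(\beta E)$ I would work locally using the Kodaira normal forms in Remark \ref{degreefi}: at each singular point the module $\mathcal{O}_X^{2}/\langle \partial_x\pi,\partial_y\pi\rangle$ maps, via $(a,b)\mapsto (\partial_y\pi)\,a-(\partial_x\pi)\,b$ (a Koszul-style contraction), onto the ideal $\langle \partial_x\pi,\partial_y\pi\rangle$, which is precisely $I_S$ at that point in each of the cases $\mathrm{I}_n$, $\mathrm{II}$, $\mathrm{III}$, $\mathrm{IV}$. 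A one-line check in the polynomial ring shows the kernel is contained in the relation submodule, so this is actually an isomorphism; in particular $\Omega^1_{X/\mathbb{P}^1}$ is torsion-free of rank one, and the Chern class computation forces it to be globally $I_S\otimes \mathcal{O}_X(\beta E)$.

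Next I tensor the sequence by $A$ and take cohomology. Since $\pi^{*}\Omega^1_{\mathbb{P}^1}\otimes A=\mathcal{O}_X(A-2E)$ and $(A-2E)-K_X=A-\beta E$ is nef and big by hypothesis, Kawamata--Viehweg vanishing gives $H^{j}(X,A-2E)=0$ for $j\geq 1$. The long exact sequence therefore collapses to an isomorphism
\begin{equation*}
H^1(X,\Omega^1_X\otimes A)\ \xrightarrow{\ \sim\ }\ H^1(X,I_S\otimes \mathcal{O}_X(A+\beta E)).
\end{equation*}

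Finally I apply cohomology to the ideal sheaf sequence $0\to I_S\to \mathcal{O}_X\to \mathcal{O}_S\to 0$ twisted by $A+\beta E$. Because $(A+\beta E)-K_X=A+2E$ is ample, Kodaira vanishing yields $H^1(X,A+\beta E)=0$. Hence $H^1(X,I_S(A+\beta E))=0$ if and only if the restriction $H^0(X,A+\beta E)\to H^0(S,(A+\beta E)|_S)$ is surjective, which is exactly the assertion that $S$ imposes linearly independent conditions on sections of $A+\beta E$. Combining this equivalence with the isomorphism above gives the proposition.

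The main obstacle is the very first step: the identification $\Omega^1_{X/\mathbb{P}^1}\cong I_S\otimes \mathcal{O}_X(\beta E)$. The Chern class computation is immediate, but a priori $\Omega^1_{X/\mathbb{P}^1}$ could carry a torsion subsheaf on $S$, and the torsion-freeness is exactly what forces the cokernel to be this twisted ideal sheaf rather than a sum of a line bundle and a torsion piece; so the case-by-case local verification at each Kodaira fiber type is the essential ingredient.
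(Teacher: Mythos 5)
Your proposal is correct and follows essentially the same route as the paper: the paper uses the two exact sequences $0\to\pi^*\Omega^1_{\mathbb{P}^1}\to\Omega^1_X\to\Omega^1_{X/\mathbb{P}^1}\to0$ and $0\to\Omega^1_{X/\mathbb{P}^1}\to\omega_{X/\mathbb{P}^1}\to\omega_{X/\mathbb{P}^1}|_S\to0$ with $\omega_{X/\mathbb{P}^1}=\mathcal{O}_X(\beta E)$, which is precisely your identification $\Omega^1_{X/\mathbb{P}^1}\cong I_S\otimes\mathcal{O}_X(\beta E)$, followed by the same Kawamata--Viehweg vanishing for $A-2E$ and Kodaira vanishing for $A+\beta E$. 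Your local Koszul verification at the Kodaira fiber types supplies detail the paper leaves implicit; the only cosmetic improvement would be to justify the global twist by noting that the double dual is a line bundle agreeing with $\omega_X\otimes\mathcal{O}(2E)$ off the codimension-two set $S$ (as the paper does), rather than appealing to the Chern class alone.
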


\begin{proof}
We consider the relative K\"ahler differentials $\Omega^1_{X/\mathbb{P}^1}$ and the exact sequence of coherent sheaves on $X$:
\begin{equation}\label{exrKdi}
0\rightarrow \pi^*\Omega^1_{\mathbb{P}^1}\rightarrow \Omega^1_X\rightarrow \Omega^1_{X/\mathbb{P}^1}\rightarrow 0,
\end{equation}
where $\pi^*\Omega^1_{\mathbb{P}^1}=\pi^*\mathcal{O}(-2)=\mathcal{O}(-2E)$. We tensor the exact sequence with $A$ to get the induced long exact sequence of cohomology spaces:
$$H^1(X,A-2E)\rightarrow H^1(X,\Omega^1_X\otimes A)\rightarrow H^1(X,\Omega^1_{X/\mathbb{P}^1}\otimes A)\rightarrow H^2(X,A-2E).$$
By Lemma \ref{Kx}, the canonical bundle $K_X=(\beta-2)E$. Then $A-2E-K_X=A-\beta E$ is nef and big. Thus both these two cohomology spaces $H^1(X,A-2E)$ and $H^2(X,A-2E)$ are zero by Kawamata-Viehweg vanishing. Therefore the map from $H^1(X,\Omega^1_X\otimes A)$ to $H^1(X,\Omega^1_{X/\mathbb{P}^1}\otimes A)$ is isomorphic.

\smallskip

The relative dualizing sheaf $\omega_{X/\mathbb{P}^1}=\omega_X\otimes (\pi^*\omega_{\mathbb{P}^1})^*$ is a line bundle which describes the difference between the canonical line bundles of $X$ and $\mathbb{P}^1$. The following exact sequence expresses that the difference from $\Omega^1_{X/\mathbb{P}^1}$ to a line bundle is on the singular locus $S$ (as a scheme) of the elliptic fibration:
\begin{equation}\label{exdiffonS}
0\rightarrow \Omega^1_{X/\mathbb{P}^1}\rightarrow \omega_{X/\mathbb{P}^1}\rightarrow \omega_{X/\mathbb{P}^1}|_S \rightarrow 0.
\end{equation}
By restricting to open subset $U=X-S$, all the coherent sheaves in the exact sequence (\ref{exrKdi}) are vector bundles. Then we take determinants to get $\mathrm{det}(\Omega^1_X)=\mathrm{det}(\pi^*\Omega^1_{\mathbb{P}^1})\otimes \Omega^1_{X/\mathbb{P}^1}$ on $U$, which implies $\Omega^1_{X/\mathbb{P}^1}=\omega_X\otimes\mathcal{O}(2E)$ on $U$. Thus $\omega_{X/\mathbb{P}^1}=\omega_X\otimes\mathcal{O}(2E)$ since $\omega_{X/\mathbb{P}^1}$ is a line bundle on the whole $X$. Since $K_X=(\beta-2)E$,
the exact sequence (\ref{exdiffonS}) tensored with $A$ induces a long exact sequence of cohomology spaces:
$$H^0(X,A+\beta E)\xrightarrow {\theta} H^0(S,A+\beta E)\rightarrow H^1(X,\Omega^1_X\otimes A)\rightarrow H^1(X,A+\beta E).$$
The line bundle $A+\beta E-K_X=A+2E$ is ample because $A$ is ample and $E$ is nef.
Thus $H^1(X,A+\beta E)=0$ by Kodaira vanishing. As a result, the cohomology space $H^1(X,\Omega^1_X\otimes A)$ is zero if and only if the restriction map $\theta$ is surjective, which means that $S$ imposes independent conditions on sections of $A+\beta E$.
\end{proof}

\begin{theorem}\label{Thr=1}
Let X be a smooth complex projective surface with an elliptic fibration $\pi:X\rightarrow \mathbb{P}^1$ such that all fibers are reduced, and $\beta=\chi(\mathcal{O}_{X})$ is the Euler characteristic of the structure sheaf of $X$.
Assume $A$ is an ample line bundle on $X$ with $A\cdot E=1$, where $E$ be a fiber of $\pi$. Then $H^1(X, \Omega^1_{X}\otimes A)\neq0$ if and only if $A^2\leq 21 \beta - 3$ or $\pi$ has a fiber of type $\mathrm{II}$.
\end{theorem}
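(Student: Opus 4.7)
The plan is to adapt the framework of Proposition \ref{Simpose} and reduce $H^1(X, \Omega^1_X \otimes A) = 0$ to surjectivity of the restriction $\theta: H^0(X, A + \beta E) \to H^0(S, A + \beta E)$, where the target has dimension $\deg S = 12\beta$ by Lemma \ref{degreeS}. A preliminary observation from the hypotheses (ampleness of $A$, reducedness of fibers, $A \cdot E = 1$) is that every singular fiber must be irreducible, since a decomposition $F = \sum F_i$ would give $A \cdot F = \sum A \cdot F_i \geq 2$; hence each singular fiber is of type $\mathrm{I_1}$ or $\mathrm{II}$.

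Proposition \ref{Simpose}'s hypothesis that $A - \beta E$ is nef and big is not given here, so I would re-derive the key vanishings $H^i(X, A - 2E) = 0$ for $i = 1, 2$ directly via the Leray spectral sequence, exploiting $r = 1$. On every fiber $F$ (smooth, $\mathrm{I_1}$, or $\mathrm{II}$), $A|_F$ is ample of degree $1$ on a curve of arithmetic genus $1$, so the argument of Lemma \ref{h^0=r} gives $h^1(F, A|_F) = 0$. By cohomology and base change, $R^1\pi_*\mathcal{O}(A) = 0$ and $\pi_*\mathcal{O}(A) = \mathcal{O}_{\mathbb{P}^1}(d)$ is a line bundle; Leray plus Riemann-Roch then give $d + 1 = \chi(X, A) = (A^2 + \beta + 2)/2$, so $d = (A^2 + \beta)/2 \geq 1$. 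Hence $H^1(X, A - 2E) = H^1(\mathbb{P}^1, \mathcal{O}(d - 2)) = 0$, and $H^2(X, A - 2E) = H^0(X, \beta E - A)^{\vee} = 0$ because $(\beta E - A)|_F$ has negative degree. Combined with $H^1(X, A + \beta E) = 0$ by Kodaira (since $A + 2E$ is ample), the exact sequences in Proposition \ref{Simpose} yield $H^1(X, \Omega^1_X \otimes A) \cong \operatorname{coker}(\theta)$, and Riemann-Roch plus Kodaira give $h^0(X, A + \beta E) = (A^2 + 3\beta + 2)/2$.

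For the ``if'' direction: if $A^2 \leq 21\beta - 3$ then $h^0(X, A + \beta E) \leq (24\beta - 1)/2 < 12\beta$, so $\theta$ cannot be surjective; if instead $\pi$ has a type $\mathrm{II}$ fiber $F$, Lemma \ref{h^0=r} gives $h^0(F, (A + \beta E)|_F) = 1$ while $h^0(S_{0,F}, L) = \deg S_{0,F} = 2$, so already the local restriction $H^0(F, L|_F) \to H^0(S_{0,F}, L)$ fails to surject, and therefore so does the projection of $\theta$ onto the $F$-summand of $H^0(S, A + \beta E)$. In either case $H^1(X, \Omega^1_X \otimes A) \neq 0$.

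For the ``only if'' direction, assume $A^2 \geq 21\beta - 2$ and $\pi$ has no type $\mathrm{II}$ fibers; then all $k = 12\beta$ singular fibers $F_i$ are of type $\mathrm{I_1}$, and $S$ consists of $12\beta$ distinct reduced points $p_i$ mapping to distinct $t_i \in \mathbb{P}^1$. I would factor $\theta = \psi \circ \phi$, where $\phi: H^0(X, A + \beta E) \to \bigoplus_i H^0(F_i, L|_{F_i})$ and $\psi: \bigoplus_i H^0(F_i, L|_{F_i}) \to \bigoplus_i H^0(S_{0,i}, L)$. Lemma \ref{I_n} (applied with $n = 1$) makes $\psi$ a direct sum of isomorphisms between $1$-dimensional spaces. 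The same Leray argument gives $\pi_*\mathcal{O}(A + \beta E) = \mathcal{O}_{\mathbb{P}^1}(d + \beta)$ with $d + \beta = (A^2 + 3\beta)/2$, and $\phi$ becomes the evaluation $H^0(\mathbb{P}^1, \mathcal{O}(d + \beta)) \to \bigoplus_i \mathbb{C}_{t_i}$ at $12\beta$ distinct points, which is surjective iff $d + \beta \geq 12\beta - 1$, equivalent to $A^2 \geq 21\beta - 2$---precisely our hypothesis. The main obstacle is this replacement of the Kawamata-Viehweg step in Proposition \ref{Simpose} by the Leray computation (which crucially exploits $r = 1$ on both smooth and singular fibers) and matching the evaluation threshold to the sharp bound $21\beta - 3$.
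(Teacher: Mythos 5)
Your proposal is correct, and it reaches the same reduction as the paper (surjectivity of $\theta\colon H^0(X,A+\beta E)\to H^0(S,A+\beta E)$, with both sides' dimensions computed the same way) by a genuinely different route. The paper first disposes of small $A^2$ via the Euler characteristic of $\Omega^1_X\otimes A$, then uses Lemma \ref{lemmar=1} to write $A\sim A_0+mE$ for a section $A_0$, verifies that $A-\beta E$ is nef and big so that Proposition \ref{Simpose} applies, and then describes the linear system $|A_0+(m+\beta)E|$ explicitly: every member is $A_0$ plus $m+\beta$ fibers, with $A_0\cap S=\emptyset$, which yields both the threshold $m+\beta\ge 12\beta-1$ in the nodal case and the failure at a cusp (any section through the cusp vanishes on the whole fiber). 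You instead exploit $r=1$ fiberwise: constancy of $h^0(F_t,A|_{F_t})=1$ (Lemma \ref{h^0=r} on the integral fibers) gives $\pi_*A=\mathcal{O}(d)$, $R^1\pi_*A=0$, and hence the vanishings $H^i(X,A-2E)=0$ without any nef-and-big hypothesis, so your reduction covers all $A^2$ uniformly and you never need Lemma \ref{lemmar=1} or the section $A_0$; the small-$A^2$ case becomes a dimension count $h^0(X,A+\beta E)<12\beta$, the type $\mathrm{II}$ case is the degree-versus-$\deg S_0$ comparison (the argument the paper uses in Theorem \ref{Thr>1} rather than here), and the positive direction becomes surjectivity of the evaluation map of $\mathcal{O}(d+\beta)$ at the $12\beta$ points under the singular fibers, combined with Lemma \ref{I_n}. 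The only step you should spell out is the identification of $\phi$ with that evaluation map at the \emph{singular} fibers: it uses Grauert/cohomology-and-base-change, legitimate precisely because $h^0(F_t,(A+\beta E)|_{F_t})=1$ for every fiber, smooth or nodal. What your route buys is robustness and economy (no explicit description of the linear system, no $A_0\cap S=\emptyset$ argument, uniform treatment of all $A^2$); what the paper's route buys is a very concrete geometric picture of the sections of $A+\beta E$, which is reused in the examples of Section \ref{examples}.
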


Before the proof of Theorem \ref{Thr=1}, we analyze the elliptic fibration and the ample line bundle given in the theorem.

\begin{lemma}\label{lemmar=1}
With the same assumptions and notations in Theorem \ref{Thr=1}, we conclude that any fiber of $\pi$ is irreducible and has multiplicity one. When $A^2\geq 3\beta$, the ample line bundle $A$ is linear equivalent to $A_0+mE$ for a section $A_0$ of $\pi$ and some integers $m\geq 2\beta$, satisfying $A_0^2=-\beta$, $A^2=-\beta+2m$ and $K_X\cdot A_0=\beta-2$.
\end{lemma}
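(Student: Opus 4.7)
The plan proceeds in three stages: showing all fibers are irreducible, constructing a section $A_0$ of $\pi$ from a global section of $A$, and then computing the numerical invariants by adjunction.

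First, I would verify irreducibility. Writing any fiber as $F = \sum n_i C_i$, the standing reducedness hypothesis gives $n_i = 1$. Since $A$ is ample one has $A \cdot C_i \geq 1$ for every component, and the equality $\sum A \cdot C_i = A \cdot F = 1$ forces $F$ to have exactly one component.

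Next, I would extract $A_0$ from a divisor in $|A|$. By Lemma \ref{h^0=r} (and its proof), for every fiber $F$ one has $h^0(F, A|_F) = 1$ and $h^1(F, A|_F) = 0$, so by cohomology and base change $\pi_* A$ is a line bundle on $\mathbb{P}^1$ and $R^1 \pi_* A = 0$. Leray then degenerates to $H^0(X,A) = H^0(\mathbb{P}^1, \pi_* A)$, while Riemann--Roch on $X$ combined with $K_X = (\beta-2)E$ from Lemma \ref{Kx} computes $\chi(X,A) = \frac{1}{2}(A^2 + \beta) + 1$. Thus $\pi_* A \cong \mathcal{O}_{\mathbb{P}^1}(d)$ with $d = \frac{1}{2}(A^2 + \beta)$, which under $A^2 \geq 3\beta$ satisfies $d \geq 2\beta \geq 0$. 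Picking a nonzero section $s \in H^0(X,A)$ with zero divisor $Z$ and separating $Z$ into horizontal and vertical components, the identity $Z \cdot E = 1$ combined with irreducibility of every fiber forces the horizontal part to be a single irreducible curve $A_0$ with multiplicity $1$ and $A_0 \cdot E = 1$, while the vertical part is a sum $\sum m_j E_{t_j}$ of full fibers. Hence $A \sim A_0 + mE$ for the nonnegative integer $m = \sum m_j$.

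Finally, $\pi|_{A_0} : A_0 \to \mathbb{P}^1$ is a finite birational morphism onto the normal curve $\mathbb{P}^1$, hence an isomorphism by the standard corollary of Zariski's Main Theorem, so $A_0 \cong \mathbb{P}^1$ is a genuine section of $\pi$. Adjunction on $A_0$ reads $-2 = A_0^2 + K_X \cdot A_0$, and inserting $K_X \cdot A_0 = (\beta - 2)(E \cdot A_0) = \beta - 2$ gives at once $A_0^2 = -\beta$ and the stated formula for $K_X \cdot A_0$. Expanding $A^2 = (A_0 + mE)^2 = A_0^2 + 2m(A_0 \cdot E) = -\beta + 2m$, the assumption $A^2 \geq 3\beta$ is equivalent to $m \geq 2\beta$. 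The only subtle step is promoting ``irreducible horizontal curve of degree $1$ over $\mathbb{P}^1$'' to ``smooth section isomorphic to $\mathbb{P}^1$'' so that the genus-zero adjunction applies, which is exactly what the normality of $\mathbb{P}^1$ supplies.
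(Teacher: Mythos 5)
Your argument is correct and follows essentially the same route as the paper: fibers are irreducible because $A$ is ample, reduced fibers are numerically $E$, and $A\cdot E=1$; a member of $|A|$ then splits as one horizontal curve $A_0$ of fiber-degree $1$ plus full fibers, and adjunction with $K_X=(\beta-2)E$ yields $A_0^2=-\beta$, $A^2=-\beta+2m$, and $m\geq 2\beta$. The only cosmetic differences are that you get $h^0(X,A)>0$ via $\pi_*A$ and cohomology-and-base-change where the paper uses Riemann--Roch plus $h^2(X,A)=h^0(X,K_X-A)=0$ from $(K_X-A)\cdot A<0$, and your Zariski's-Main-Theorem step making explicit that $A_0\cong\mathbb{P}^1$ is a genuine section, a detail the paper leaves implicit; both are fine.
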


\begin{proof}
Since $\mathcal{O}(E)=\pi^*\mathcal{O}_{\mathbb{P}^1}(1)$, any fiber of $\pi$ is an effective divisor linearly equivalent to $E$. By the assumption that $A.E=1$, the intersection number of $A$ with any fiber of $\pi$ equals one.  Also since $A$ is ample, any fiber of $\pi$ is irreducible and has multiplicity one.
By Lemma \ref{Kx}, the canonical bundle $K_X=(\beta-2)E$.

\smallskip

Riemann-Roch shows
\begin{align*}
\beta &=h^0(X, A)-h^1(X, A)+h^2(X, A)
\\&=\frac{A\cdot(A-K_X)}{2}+\beta=\frac{A^2}{2}-\frac{\beta-2}{2}+\beta=\frac{A^2}{2}+\frac{\beta}{2}+1.
\end{align*}
Since $A^2\geq 3\beta$, the intersection number $(K_X-A).A=(\beta-2)E.A-A^2\leq\beta-2-3\beta=-2\beta-2$. Lemma \ref{beta>0} shows $\beta\geq 0$. Thus $(K_X-A).A<0$ which implies $K_X-A$ is not effective. Hence $h^2(X, A)=h^0(X, K_X-A)=0$. Then $h^0(X, A)\geq \frac{A^2}{2}+\frac{\beta}{2}+1>0.$  Therefore $A$ is linearly equivalent to an effective divisor.

\smallskip

We can write $A=\sum_ia_iC_i$, where $a_i>0$ and $C_i$ are irreducible curves on  $X$. Since $A\cdot E=1$ and $E$ is nef, only one irreducible curve $C_{i_0}$ in the sum has $C_{i_0}\cdot E=1$ and all the other $C_i$ with $i\neq i_0$ have $C_i\cdot E=0$. Then those $C_i$ with $i\neq i_0$ are curves supported in fibers and  $C_{i_0}$ is a section of $\pi$ which we denoted it by $A_0$. Since every fiber is irreducible and linearly equivalent to $E$, then $A$ is linear equivalent to $A_0+mE$, where $A_0$ is isomorphic to $\mathbb{P}^1$.
Therefore $\mathrm{deg}_{A_0}(K_{A_0})=2g-2=-2$. Since $K_X\cdot A_0=(\beta-2)E.A_0=\beta-2$, adjunction formula shows $A_0^2=\mathrm{deg}_{A_0}(K_{A_0})-K_X\cdot A_0=-2-(\beta-2)=-\beta$. Then $A^2=-\beta+2m$. The assumption $A^2\geq 3\beta$ implies $m\geq2\beta$.

\end{proof}

\begin{proof}[{\bf Proof of Theorem \ref{Thr=1}.}] If $A^2< 3\beta$, then $H^1(X, \Omega^1_{X}\otimes A)\neq0$ by Remark \ref{Euleromega1}. Now we assume $A^2\geq 3\beta$.
By Lemma \ref{lemmar=1}, we have $A-\beta E=A_0+(m-\beta)E$.
It can be checked that the line bundle $A-\beta E$ is nef and big when $A^2\geq3 \beta$. Indeed, if $C$ is an irreducible curve different from $A_0$ on $X$, then $A_0\cdot C+(m-\beta)E\cdot C\geq 0$ when $m\geq \beta$; if $C=A_0$, then $(A_0+(m-\beta)E)\cdot A_0=-\beta+(m-\beta)\geq0$ when $m\geq 2\beta$. We also have $(A_0+(m-\beta)E)^2=-\beta+2(m-\beta)>0$ when $m>\frac{3}{2}\beta$. The assumption that $A^2\geq3 \beta$ implies $m\geq 2 \beta$. And Lemma \ref{beta>0} shows $\beta\geq 0$. Thus all the inequalities about $m$ above hold. Then by Proposition \ref{Simpose}, the cohomology space $H^1(X, \Omega^1_{X}\otimes A)=0$ if and only if the non-smooth locus $S$ (as a scheme) imposes linearly independent conditions on sections in $H^0(X,A+\beta E)$.

\smallskip

By Lemma \ref{Kx}, the canonical bundle $K_X=(\beta-2)E$.
Because $A$ is ample and $E$ is nef, the line bundle $A+\beta E-K_X=A+2E$ is ample. Then $h^1(X,A+\beta E)=h^2(X,A+\beta E)=0$. Riemann-Roch shows $$h^0(X,A+\beta E)=\frac{(A+\beta E)^2}{2}-\frac{(A+\beta E)\cdot K_X}{2}+\chi(\mathcal{O}(X)).$$ It is implied by Lemma \ref{lemmar=1} that $(A+\beta E)^2=(A_0+(m+\beta)E)^2=-\beta+2(m+\beta)=2m+\beta$ and $(A+\beta E)\cdot K_X=A_0\cdot K_X+(m+\beta)E\cdot K_X=\beta-2$. Then $$h^0(X,A+\beta E)=\frac{2m+\beta}{2}-\frac{\beta-2}{2}+\beta=m+1+\beta.$$

\smallskip

We consider the map $\phi:H^0(\mathbb{P}^1, \mathcal{O}(\beta+m))\rightarrow H^0(X,A_0+(\beta+m)E)$ given by pulling back a section of $\mathcal{O}(\beta+m)$ on $\mathbb{P}^1$ by $\pi$ to $H^0(X, \pi^*\mathcal{O}(\beta+m))=H^0(X, (\beta+m)E)$ and then multiplying by the canonical section of the line bundle $\mathcal{O}(A_0)$ on $X$. The map $\phi$ is injective and the section in the image of $\phi$ has zero set in $X$ of the form $A_0+E_1+\cdots+E_{\beta+m}$ for some fibers $E_1,\ldots,E_{\beta+m}$ of $\pi$. Since both $H^0(\mathbb{P}^1, \mathcal{O}(\beta+m))$ and $H^0(X,A_0+(\beta+m)E)$ have same dimension $\beta+m+1$, thus $\phi$ is also surjective.
Therefore, the linear system of $A+\beta E=A_0+(\beta+m)E$ is the set of divisors $A_0+E_1+\cdots+E_{\beta+m}$ for some fibers $E_1,\ldots,E_{\beta+m}$ of $\pi$. By Lemma \ref{lemmar=1}, all fibers of $\pi$ are irreducible. Thus $\pi$ can only have singular fibers of type $\mathrm{I}_1$ and type $\mathrm{II}$.

\smallskip

If $\pi$ only have singular fibers of type $\mathrm{I}_1$, the number of singular points equals $\mathrm{degree}(S)=12\beta$ by Lemma \ref{degreeS}. Then $S$ consists of $12\beta$ nodal points $p_1,\ldots,P_{12\beta}$ in $12\beta$ different fibers $F_1,\ldots, F_{12\beta}$ respectively. We know $H^0(S, A+\beta E)$ has a basis $\{s_i\}_{i=1}^{12\beta}$ such that $s_i(p_i)\neq0$ and $s_i(p_j)=0$ for all $j\neq i$. Thus $S$ impose linearly independent conditions on sections of $A+\beta E$ on $X$ if and only if there are sections $f_i\in H^0(X,A+\beta E)$ such that $f_i(p_i)\neq0$ and $f_i(p_j)=0$ for all $j\neq i$.

\smallskip

On one hand, if $m+\beta\geq 12\beta-1$, then for any $i\in \{1,\ldots, 12\beta\}$,  we can pick a section $f\in H^0(X,A+\beta E)$ with zero locus $$(f)_0=A_0+F_1+\ldots+F_{i-1}+F_{i+1}+\ldots+F_{12\beta}+E_{1}\ldots+E_{m+\beta-(12\beta-1)},$$ where $E_j$ is a fiber different from $F_i$ for all $j=1,\ldots, m+\beta-(12\beta-1)$. Since a section of elliptic fibration never passes through singular points of $\pi$, i.e., $A_0\cap S=\emptyset$, we have $f(p_i)\neq0$ and $f(p_j)=0$ for all $j\neq i$. Hence we can let $f_i=f$.

\smallskip

On the other hand, if there is a section $f_i\in H^0(X,A+\beta E)$ such that $f_i(p_i)\neq0$ and $f_i(p_j)=0$ for all $j\neq i$, i.e., $(f_i)_0$ contains $12\beta-1$ points $p_j$ for all $j\neq i$. Since $(f_i)_0=A_0+E_1+\cdots+E_{\beta+m}$ for some fibers $E_1,\ldots,E_{\beta+m}$ of $\pi$ and $A_0\cap S=\emptyset$, we get $m+\beta\geq 12\beta-1$. Therefore $S$ impose linearly independent conditions on sections of $A+\beta E$ if and only if $m+1+\beta\geq 12\beta$, that is $A^2=-\beta+2m\geq21\beta-2$.  Therefore, when $\pi$ has no singular fibers of type $\mathrm{II}$, the condition $A^2\leq 21 \beta - 3$ is equivalent to $H^1(X, \Omega^1_{X}\otimes A)\neq0$.

\smallskip

Now we consider the case that $\pi$ has a singular fiber $E_0$ of type $\mathrm{II}$.
Let $p$ be the cusp point in $E_0$ and $S_0=E_0\cap S$ be the component of $S$ supported at $p$. By Remark \ref{degreefi}, the degree of $S_0$ is $2$. Thus $H^0(S_0, A+\beta E)$ has dimension two with basis $h_1, h_2$ such that $h_1(p)\neq0$, $h_2(p)=0$ and $h_2$ is not identically zero on $S_0$.

\smallskip

Let $g$ be a nonzero section in $H^0(X,A+\beta E)$ vanishing at $p$. The vanishing locus of $g$ is a divisor $A_0+E_1+\cdots+E_{\beta+m}$ for some fibers $E_1,\ldots,E_{\beta+m}$ of $\pi$.
The cusp point $p$ is contained in this divisor. Since $A_0\cap S=\emptyset$, there is some $i\in\{1,\ldots,\beta+m\}$ such that $E_i$ contain $p$. Thus $E_0=E_i$, which implies $g$ vanishes on the whole fiber $E_0$. Thus $g$ is identically zero on $S_0$. This implies there is no section in $H^0(X,A+\beta E)$ restricting to $h_2$. Therefore $S$ does not impose linearly independent conditions on sections of $A+\beta E$, that is, $H^1(X, \Omega^1_{X}\otimes A)\neq0$.
\end{proof}

\begin{theorem}\label{Thr>1}
Let X be a smooth complex projective surface with an elliptic fibration $\pi:X\rightarrow \mathbb{P}^1$ such that all fibers are reduced, $\beta=\chi(\mathcal{O}_{X})$ be the Euler characteristic of the structure sheaf of $X$, $E$ be a fiber of $\pi$, and $A$ be an ample line bundle on $X$ with $r:= A\cdot E$. Assume $A-\beta E$ is nef and big. If $\pi$ has a fiber of type $\mathrm{II}$ when $r=1$, a fiber of type $\mathrm{III}$ when $r=2$, or a fiber of type $\mathrm{IV}$ when $r=3$, then $H^1(X, \Omega^1_{X}\otimes A)\neq0$. The converse holds with additional condition that $h^0(L)-h^0(L-E)=r$, where $L=A-(11\beta-1)E$.
\end{theorem}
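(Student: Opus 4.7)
The strategy is to channel both directions through Proposition \ref{Simpose}: since $A - \beta E$ is nef and big, the vanishing of $H^1(X,\Omega^1_X\otimes A)$ is equivalent to the singular locus $S$ imposing linearly independent conditions on $H^0(X, A+\beta E)$. Both implications will be analyses of the restriction map $H^0(X, A+\beta E) \to H^0(S, (A+\beta E)|_S)$.

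\textbf{Forward direction.} Suppose $\pi$ has a fiber $F_0$ of the forbidden type: $\mathrm{II}$ when $r=1$, $\mathrm{III}$ when $r=2$, or $\mathrm{IV}$ when $r=3$. Let $S_0 \subset F_0$ be its singular subscheme. The line bundle $(A+\beta E)|_{F_0} \cong A|_{F_0}$ is ample of degree $r$, so $h^0(F_0,(A+\beta E)|_{F_0}) = r$ by Lemma \ref{h^0=r}; on the other hand $\deg(S_0) = 2,3,4$ in the three respective cases by Remark \ref{degreefi}, so $\dim H^0(S_0,(A+\beta E)|_{S_0}) > r$. The restriction $H^0(X,A+\beta E) \to H^0(S_0,(A+\beta E)|_{S_0})$ factors through the $r$-dimensional space $H^0(F_0,(A+\beta E)|_{F_0})$, hence cannot surject, and therefore neither can the map to $H^0(S,(A+\beta E)|_S)$. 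Proposition \ref{Simpose} then yields $H^1(X,\Omega^1_X\otimes A) \neq 0$.

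\textbf{Reverse direction.} Let $F_1,\dots,F_k$ be the distinct singular fibers of $\pi$, lying over points $p_1,\dots,p_k \in \mathbb{P}^1$, and set $T_j = F_j \cap S$, so that $S = \bigsqcup_j T_j$. By linearity it suffices to construct, for each $j$ and each $t\in H^0(T_j,(A+\beta E)|_{T_j})$, a global section $\sigma\in H^0(X,A+\beta E)$ restricting to $t$ on $T_j$ and to $0$ on every $T_i$ with $i\neq j$. First, Lemmas \ref{II}, \ref{III}, \ref{IV}, and \ref{I_n} — together with the hypothesis that the forbidden fiber type does not occur — furnish a lift $\tilde t\in H^0(F_j,(A+\beta E)|_{F_j})$ of $t$. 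Next, write $L = A - (11\beta-1)E$, so that $A+\beta E = L + (12\beta-1)E$, and note $L|_{F_j}\cong A|_{F_j}$ is ample of degree $r$; the exact sequence $0\to L(-F_j)\to L\to L|_{F_j}\to 0$ combined with $F_j\sim E$ gives $h^0(L) - h^0(L-E) \le h^0(F_j,L|_{F_j}) = r$, and the hypothesis forces equality, so $H^0(X,L)\to H^0(F_j,L|_{F_j})$ is surjective. Finally, since Lemma \ref{degreeS} yields $k\le \deg(S)=12\beta$, we can choose $\gamma_j\in H^0(\mathbb{P}^1,\mathcal{O}(12\beta-1))$ with $\gamma_j(p_j)\neq 0$ and $\gamma_j(p_i)=0$ for $i\neq j$; lifting $\tilde t/\gamma_j(p_j)$ to some $\tau\in H^0(X,L)$ and setting $\sigma = \tau\cdot \pi^*\gamma_j$ produces a section of $A+\beta E$ whose restriction to $F_j$ is $\tilde t$ (hence to $T_j$ is $t$) and whose restriction to each other $F_i$ is zero.

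The heart of the argument — and the step where the additional hypothesis is indispensable — is the surjectivity of $H^0(X,L)\to H^0(F_j,L|_{F_j})$ on \emph{every} fiber, not only a generic one. Without the equality $h^0(L)-h^0(L-E) = r$, the $H^1$-obstruction in the long exact sequence could prevent us from prescribing the restriction to a chosen singular fiber, and the local data supplied by Lemmas \ref{II}--\ref{I_n} could not be globalized. The bound $k\le 12\beta$ from Lemma \ref{degreeS} is what makes the multiplicative trick with $\gamma_j$ feasible, converting the local fiberwise lifts into a genuine global section.
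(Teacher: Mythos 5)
Your proof is correct and is essentially the paper's own argument: both directions run through Proposition \ref{Simpose}, the forward one via $\deg S_0=r+1>r=h^0(F_0,(A+\beta E)|_{F_0})$ (Remark \ref{degreefi} and Lemma \ref{h^0=r}), and the converse via the surjectivity of $H^0(X,L)\to H^0(F,L|_F)$ forced by $h^0(L)-h^0(L-E)=r$, the fiberwise Lemmas \ref{II}, \ref{III}, \ref{IV}, \ref{I_n}, and multiplication by the pullback of a section of $\mathcal{O}_{\mathbb{P}^1}(12\beta-1)$ vanishing at the other singular fibers, which is precisely the content and method of the paper's Lemma \ref{XrestriontoS}. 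The only step you leave implicit is the component count showing that a type $\mathrm{III}$ (resp.\ $\mathrm{IV}$) fiber automatically forces $r\geq2$ (resp.\ $r\geq3$), so that together with the excluded cases the degree hypotheses $r\geq3$ of Lemma \ref{III} and $r\geq4$ of Lemma \ref{IV} are indeed satisfied; the paper spells this out in one line.
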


Before the proof of Theorem \ref{Thr>1}, we first see two corollaries.

\begin{remark}\label{L-Kx}
In Theorem \ref{Thr>1}, the converse holds if we make the additional condition a slightly stronger, that is, the converse holds if $L-(\beta-1)E$ is nef and big. Indeed, by Lemma \ref{Kx}, the canonical bundle $K_X=(\beta-2)E$. Since $L-E-K_X=L-(\beta-1)E$ are nef and big, the line bundle $L-K_X=L-(\beta-1)E+E$ is also nef and big. By Kawamata-Viehweg vanishing, the higher cohomology spaces of $L$ and $L-E$ are all zero. The by Riemann-Roch, we have
$$h^0(X, L)=\frac{L^2}{2}-\frac{L\cdot K_X}{2}+\beta \text{ and } h^0(X, L-E)=\frac{(L-E)^2}{2}-\frac{(L-E)\cdot K_X}{2}+\beta.$$ Since $L^2=A^2-2(11\beta-1)r$ and $(L-E)^2=A^2-2(11\beta)r$, we get
$$h^0(X, L)-h^0(X, L-E)=\frac{L^2}{2}-\frac{(L-E)^2}{2}=r.$$ Therefore the condition that $L-(\beta-1)E$ is nef and big implies $h^0(L)-h^0(L-E)=r$. So the converse holds. Moreover, we get the following corollary.
\end{remark}

\begin{corollary}\label{cor>1}
Let X be a smooth complex projective surface with an elliptic fibration $\pi:X\rightarrow \mathbb{P}^1$ such that all fibers are reduced, $\beta=\chi(\mathcal{O}_{X})$ be the Euler characteristic of the structure sheaf of $X$, $E$ be a fiber of $\pi$, and $A$ be an ample line bundle on $X$ with $r:= A\cdot E$. Assume $A-(12\beta-2)E$ is nef and big. Then $H^1(X, \Omega^1_{X}\otimes A)\neq0$ if and only if $\pi$ has a fiber of type $\mathrm{II}$ when $r=1$, a fiber of type $\mathrm{III}$ when $r=2$, or a fiber of type $\mathrm{IV}$ when $r=3$.
\end{corollary}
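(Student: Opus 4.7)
The plan is to observe that Corollary \ref{cor>1} is essentially a direct consequence of Theorem \ref{Thr>1} together with Remark \ref{L-Kx}, once we verify the two hypotheses of Theorem \ref{Thr>1} under the single assumption that $A-(12\beta-2)E$ is nef and big. So the proof should be short and bookkeeping-style, rather than introducing new geometric input.

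First, I would check that the hypothesis $A-\beta E$ nef and big of Theorem \ref{Thr>1} follows from $A-(12\beta-2)E$ nef and big. Writing $A-\beta E = \bigl(A-(12\beta-2)E\bigr)+(11\beta-2)E$, since $E$ is nef (it is the class of a fiber) and $11\beta-2\geq 0$ whenever $\beta\geq 1$, the sum of a nef and big divisor with a nonnegative multiple of a nef divisor is again nef and big. The only remaining case is $\beta=0$, handled by Lemma \ref{beta>0}: there $X\cong \mathbb{P}^1\times E$, so $A-\beta E=A$ is ample, hence nef and big, and the hypothesis is automatic.

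Next, I would verify the additional condition $h^0(L)-h^0(L-E)=r$ for $L=A-(11\beta-1)E$. This is exactly the content of Remark \ref{L-Kx}: the identity $L-(\beta-1)E = A-(12\beta-2)E$ shows that the assumption is literally the hypothesis of the remark, and the remark then deduces $h^0(L)-h^0(L-E)=r$ via Kawamata--Viehweg vanishing applied to $L$ and $L-E$ (both differ from $K_X=(\beta-2)E$ by a nef and big divisor) combined with Riemann--Roch and the computations $L^2=A^2-2(11\beta-1)r$ and $(L-E)^2=A^2-2(11\beta)r$.

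With both hypotheses of Theorem \ref{Thr>1} verified, I would conclude by invoking that theorem directly: $H^1(X,\Omega^1_X\otimes A)\neq 0$ if and only if $\pi$ has a fiber of type $\mathrm{II}$ when $r=1$, of type $\mathrm{III}$ when $r=2$, or of type $\mathrm{IV}$ when $r=3$. There is no main obstacle here: the only point that requires attention is the nonnegativity of the coefficient $11\beta-2$ in the rewrite of $A-\beta E$, which is why the edge case $\beta=0$ has to be isolated and dispatched using Lemma \ref{beta>0}.
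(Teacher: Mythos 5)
Your proposal is correct and follows essentially the same route as the paper's proof: decompose $A-\beta E=\bigl(A-(12\beta-2)E\bigr)+(11\beta-2)E$ (treating $\beta=0$ separately, where $A-\beta E=A$ is ample), identify $A-(12\beta-2)E=L-(\beta-1)E$ for $L=A-(11\beta-1)E$, and conclude via Remark \ref{L-Kx} and Theorem \ref{Thr>1}. The appeal to Lemma \ref{beta>0} in the $\beta=0$ case is harmless but not needed beyond knowing $\beta\geq 0$.
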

\begin{proof}
If $\beta=0$, then $A-\beta E=A$ is ample, thus is also nef and big. If $\beta\geq 1$ implies $11 \beta-2\geq0$.
Since $E$ is nef and $A-(12\beta-2)E$ is nef and big, the line bundle $A-\beta E=A-(12\beta-2)E+(11 \beta-2)E$ is also nef and big. Let $L=A-(11\beta-1)E$, we have $A-(12\beta-2)E=L-(\beta-1)E$. Then the conclusion follows from Remark \ref{L-Kx} and Theorem \ref{Thr>1}.
\end{proof}

\begin{lemma}
Let X be a smooth complex projective surface with an elliptic fibration $\pi:X\rightarrow \mathbb{P}^1$ such that all fibers are reduced, $\beta=\chi(\mathcal{O}_{X})$ be the Euler characteristic of the structure sheaf of $X$, $E$ be a fiber of $\pi$, and $A$ be an ample line bundle on $X$ with $r:= A\cdot E\geq2$. If $A^2\geq 2r^2\beta+25r\beta-4r-2\beta$, then $A-(12\beta-2)E$ is nef and big.
\end{lemma}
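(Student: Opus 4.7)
The plan is to show that $D := A - (12\beta - 2)E$ is both nef and big, after which the conclusion follows immediately. The case $\beta = 0$ is handled first: by Lemma \ref{beta>0} the fibration is then trivial, $12\beta - 2 = -2$, and $D = A + 2E$ is the sum of an ample and a nef divisor, hence ample. So from now on I assume $\beta \geq 1$.

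Bigness is a direct computation: using $E^2 = 0$ and $A \cdot E = r$, one has $D^2 = A^2 - 2(12\beta - 2)r = A^2 - 24r\beta + 4r$, and the hypothesis gives $D^2 \geq \beta(2r^2 + r - 2) > 0$ (using $r \geq 2$). Since a nef divisor on a surface with positive self-intersection is big (by Riemann--Roch, once nefness is in hand), it suffices to establish nefness.

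For nefness I must verify $D \cdot C \geq 0$ for every irreducible curve $C \subset X$. When $C$ is vertical (contained in a fiber) we have $E \cdot C = 0$, so $D \cdot C = A \cdot C > 0$ by ampleness of $A$. The substantive case is a horizontal curve $C$ with $s := E \cdot C \geq 1$, where I must show $A \cdot C \geq (12\beta - 2)s$. The key step here is to apply the Hodge index theorem to the auxiliary class $D_1 := sA - rC$, which satisfies $D_1 \cdot E = sr - rs = 0$. Because $E$ lies on the boundary of the positive cone of the Lorentzian lattice $\mathrm{NS}(X) \otimes \mathbb{R}$ (as $E^2 = 0$ with $E$ nef), every class orthogonal to $E$ has nonpositive self-intersection; therefore $D_1^2 \leq 0$, which rearranges to $2sr\, (A \cdot C) \geq s^2 A^2 + r^2 C^2$. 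Combining this with the adjunction-based bound $C^2 \geq -2 - (\beta - 2)s$ (from $p_a(C) \geq 0$ together with $K_X = (\beta - 2)E$, Lemma \ref{Kx}) yields $A \cdot C \geq \frac{sA^2}{2r} - \frac{r}{s} - \frac{r(\beta - 2)}{2}$.

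The inequality $A \cdot C \geq (12\beta - 2)s$ is then equivalent to $A^2 \geq (24\beta - 4)r + \frac{2r^2}{s^2} + \frac{r^2(\beta - 2)}{s}$ for every $s \geq 1$, and a short monotonicity check in $s$ shows the right-hand side is maximized at $s = 1$, giving the requirement $A^2 \geq r^2\beta + 24r\beta - 4r$. This is implied by the hypothesis, since $(2r^2\beta + 25r\beta - 4r - 2\beta) - (r^2\beta + 24r\beta - 4r) = \beta(r^2 + r - 2) \geq 0$ for $r \geq 2$. The main obstacle is the Hodge index step for horizontal $C$; once that bound is extracted the remaining ingredients are routine arithmetic, and combining nefness with $D^2 > 0$ gives the claim.
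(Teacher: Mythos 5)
Your argument is correct, but it proceeds by a genuinely different route than the paper. The paper first uses Riemann--Roch (plus the non-effectivity of $K_X-A+k_0E$) to show that $A-k_0E$ with $k_0=12\beta-2+r\beta$ is linearly equivalent to an effective divisor $D=\sum a_iD_i$, and then verifies nefness of $D+(r\beta)E=A-(12\beta-2)E$ only against the components $D_i$ (vertical components via ampleness of $A$, horizontal ones via adjunction together with the bounds $1\le a_j\le r$, $1\le b_j\le r$ forced by $D\cdot E=r$), before checking $(D+(r\beta)E)^2>0$. You instead test nefness of $A-(12\beta-2)E$ directly against \emph{every} irreducible curve $C$, using the Hodge index theorem: since $E\neq 0$ is nef with $E^2=0$, the class $sA-rC$ orthogonal to $E$ has nonpositive square, giving $2sr\,(A\cdot C)\ge s^2A^2+r^2C^2$, which combined with the adjunction bound $C^2\ge -2-(\beta-2)s$ yields the needed inequality; this bypasses the effectivity step and the component bookkeeping entirely, and in fact shows the weaker hypothesis $A^2\ge r^2\beta+24r\beta-4r$ already suffices (and your argument does not really use $r\ge 2$ beyond the final comparison). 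The trade-off is that the paper's proof uses only Riemann--Roch and adjunction, while yours invokes the signature statement of the Hodge index theorem. One small point of care: when $\beta=1$ the function $s\mapsto \frac{2r^2}{s^2}+\frac{r^2(\beta-2)}{s}$ is not monotone on $[1,\infty)$ (it decreases, then increases toward $0$ from below), so "monotonicity" is not quite the right word; but its maximum over $s\ge 1$ is still attained at $s=1$ with value $r^2\beta$, so your conclusion stands.
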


\begin{proof}
If $\beta=0$, then $A-(12\beta-2)E=A+2E$ is nef and big since $A$ is ample and $E$ is nef. Now we assume $\beta\geq1$.
Let $k_0=12\beta-2+r\beta$. Then $A^2\geq 2r^2\beta+25r\beta-4r-2\beta=2rk_0+(r-2)\beta$.
We first show the line bundle $A-k_0E$ is liner equivalent to an effective divisor. Since $(A-k_0E)^2=A^2-2k_0r$, $(A-k_0E)\cdot K_X=(\beta-2)r$ and the assumption $A^2\geq 2rk_0+(r-2)\beta$,  Riemann-Roch shows
\begin{align*}
\chi(X, A-k_0E) &=\frac{(A-k_0E)^2}{2}-\frac{(A-k_0E)\cdot K_X}{2}+\beta\\
&=\frac{A^2}{2}-k_0r-\frac{r}{2}(\beta-2)+\beta\\
&\geq rk_0+\frac{r-2}{2}\beta-k_0r-\frac{r}{2}\beta+r+\beta=r\geq2.
\end{align*}
We also have
\begin{align*}
(K_X-A+k_0E)\cdot A&=r(\beta-2)-A^2+k_0r\\
&\leq r(\beta-2)-2rk_0-(r-2)\beta+k_0r=-2r+2\beta-rk_0\\
&=-2r+2\beta-r(12\beta-2+r\beta)=(2-12r-r^2)\beta<0
\end{align*} since $\beta\geq1$ and $r\geq2$.
Thus $K_X-A+k_0E$ is not effective. Hence $h^2(X, A-k_0E)=h^0(X, K_X-A+k_0E)=0$.  Therefore $h^0(X, A-k_0E)>0$ which imples $A-k_0E$ is linearly equivalent to an effective divisor which we denote by $D$. We write $$D=\sum_{i=1}^na_iD_i=\sum_{i=1}^la_iD_i+\sum_{i=l+1}^na_iD_i$$ for some $1\leq l\leq n$, where $a_i\geq1$, $D_i$ are different irreducible curves on $X$ and $D_i$ is contained in a fiber of $\pi$ if and only if $i\in \{l+1,\ldots,n\}$.

\smallskip

We claim that $D+(r\beta)E$ is nef by checking $(D+(r\beta)E)\cdot C$ for any irreducible curve $C$ on $X$. If $C\neq D_i$ for all $i=1,\ldots,n$, then $(D+(r\beta)E)\cdot C=\sum_{i=1}^na_i(D_i\cdot C)+(r\beta)E\cdot C\geq0$ since $E$ is nef. Note that $E\cdot D_i=0$ for $i\in \{l+1,\ldots,n\}$ since $D_i$ is contained in a fiber of $\pi$. Hence if $C=D_i$ for some $i\in \{l+1,\ldots,n\}$, we get $$(D+(r\beta)E)\cdot C=(D+k_0E)\cdot D_i=A\cdot D_i>0$$ since $A$ is ample.
Now we assume $C=D_j$ for some $j\in \{1,\ldots,l\}$. Let $b_i=D_i\cdot E$. Since $D\cdot E=A\cdot E=r $ and $E\cdot D_i=0$ for all $i\in \{l+1,\ldots,n\}$, thus $E\cdot\sum_{i=1}^la_iD_i=r$. Hence $1\leq b_i\leq r$ and $1\leq a_i\leq r$ for $i\in \{1,\ldots,l\}$. We have $$(D+(r\beta)E)\cdot D_j=a_jD_j^2+\sum_{i\neq j}a_iD_i\cdot D_j+ (r\beta)E\cdot D_j\geq a_jD_j^2+(r\beta)b_j.$$ By adjunction formula $K_{D_j}=(K_X+D_j)|_{D_j}$,
we get
$$2g-2=\mathrm{deg}(K_{D_j})=(K_X+D_j)\cdot D_j=(\beta-2)E\cdot D_j +D_j^2,$$ where $g$ is the arithmetic genus of $D_j$. Thus $D_j^2=-(\beta-2)b_j+2g-2\geq -(\beta-2)b_j-2$. If $\beta=1$, then $$(D+(r\beta)E)\cdot D_j\geq a_j(b_j-2)+rb_j\geq a_j(1-2)+r=r-a_j\geq0 $$ since $ b_j\geq1$ and $1\leq a_j\leq r$. If $\beta\geq 2$, then $(\beta-2)b_j+2>0$. Therefore
\begin{align*}
(D+(r\beta)E)\cdot D_j&\geq -a_j((\beta-2)b_j+2)+(r\beta)b_j\\
&\geq -r((\beta-2)b_j+2)+(r\beta)b_j=2rb_j-2r\geq 0.
\end{align*} since $ b_j\geq1$ and $a_j\leq r$.

\smallskip

Moreover, we have
\begin{align*}
(D+(r\beta)E)^2&=(A-k_0E+(r\beta)E)^2=A^2+2(r\beta-k_0)E\cdot A\\
&\geq 2rk_0+(r-2)\beta+2(r\beta-k_0)r=(r-2)\beta+2r^2\beta>0
\end{align*}since $\beta\geq1$ and $r\geq2$.
Thus $A-(12\beta-2)E=A-k_0E+(r\beta)E=D+(r\beta)E$ is nef and big.
\end{proof}

This Lemma allows us to check that $A-(12\beta-2)E$ is nef and big by computing the self intersection number $A^2$ which might be much easier. Then by Corollary \ref{cor>1}, we get the following corollary.

\begin{corollary}\label{cor>1A^2}
Let X be a smooth complex projective surface with an elliptic fibration $\pi:X\rightarrow \mathbb{P}^1$ such that all fibers are reduced, $\beta=\chi(\mathcal{O}_{X})$ be the Euler characteristic of the structure sheaf of $X$, $E$ be a fiber of $\pi$, and $A$ be an ample line bundle on $X$ with $r:= A\cdot E\geq2$. Assume $A^2\geq 2r^2\beta+25r\beta-4r-2\beta$. Then $H^1(X, \Omega^1_{X}\otimes A)\neq0$ if and only if $\pi$ has a fiber of type $\mathrm{III}$ when $r=2$, or a fiber of type $\mathrm{IV}$ when $r=3$.
\end{corollary}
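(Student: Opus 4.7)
The plan is to reduce this to Corollary \ref{cor>1} via the preceding Lemma, which is the only nontrivial ingredient we still need to invoke. The statement of Corollary \ref{cor>1A^2} is essentially the same as Corollary \ref{cor>1} but with the hypothesis ``$A-(12\beta-2)E$ is nef and big'' replaced by a numerical hypothesis on the self-intersection $A^2$. So the proof is simply a matter of checking that the numerical hypothesis implies the nef-and-big hypothesis, and then invoking the earlier corollary verbatim.

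First, I would apply the unnamed Lemma immediately preceding Corollary \ref{cor>1A^2}: under the assumption $A \cdot E = r \geq 2$ and $A^2 \geq 2r^2\beta + 25r\beta - 4r - 2\beta$, that Lemma gives that $A - (12\beta - 2)E$ is nef and big. This is precisely the hypothesis required by Corollary \ref{cor>1}.

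Next, I would invoke Corollary \ref{cor>1}, which yields that $H^1(X, \Omega^1_X \otimes A) \neq 0$ if and only if $\pi$ has a fiber of type $\mathrm{II}$ (when $r=1$), type $\mathrm{III}$ (when $r=2$), or type $\mathrm{IV}$ (when $r=3$). Since the hypothesis $r \geq 2$ excludes the $r=1$ case, the type $\mathrm{II}$ alternative drops out, and we are left exactly with the statement of Corollary \ref{cor>1A^2}: the vanishing fails iff $r=2$ and $\pi$ has a fiber of type $\mathrm{III}$, or $r=3$ and $\pi$ has a fiber of type $\mathrm{IV}$. For $r \geq 4$, both corollaries say the cohomology vanishes unconditionally (no bad fiber type appears in the criterion), so nothing further is required.

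There is no genuine obstacle here; the substantive geometric input has already been carried out in Theorem \ref{Thr>1}, Corollary \ref{cor>1}, and the Lemma bounding $A^2$. The proof is a two-line chain of implications, and the only care needed is to note that the $r=1$ branch of Corollary \ref{cor>1} is simply outside the range of hypotheses of Corollary \ref{cor>1A^2}.
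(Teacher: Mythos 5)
Your proposal is correct and matches the paper's own argument: the paper derives this corollary exactly by combining the preceding Lemma (the numerical bound on $A^2$ with $r\geq 2$ forces $A-(12\beta-2)E$ to be nef and big) with Corollary \ref{cor>1}, with the $r=1$ case excluded by hypothesis. No gaps.
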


As preparation for the proof of Theorem \ref{Thr>1}, the following lemma analyzes the restriction map of space of sections of line bundle from $X$ to singular locus.

\begin{lemma}\label{XrestriontoS}
Let $\pi:X\rightarrow \mathbb{P}^1$ be an elliptic fibration with all fibers reduced, $\beta=\chi(\mathcal{O}_{X})$ be the Euler characteristic of the structure sheaf of $X$ and $S$ be the singular locus of $\pi$. Assume for every singular fiber $E_0$ of $\pi$, the restriction $H^0(X,L)\rightarrow H^0(S_0, L)$ is surjective, where $S_0=S\cap E_0$. Then the restriction $\theta: H^0(X, L+(12\beta-1)E)\rightarrow H^0(S, L+(12\beta-1)E)$ is also surjective, where $E$ is a fiber of $\pi$.
\end{lemma}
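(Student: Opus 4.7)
\emph{Proof plan.} The plan is to exploit the fact that the singular locus decomposes as a disjoint union $S = \bigsqcup_{E_0} S_0$ over the finitely many singular fibers $E_0$ of $\pi$, so that
\[
H^0(S, L+(12\beta-1)E) = \bigoplus_{E_0} H^0(S_0, L+(12\beta-1)E).
\]
It then suffices to surject onto one summand while killing the others, and the role of the twist by $(12\beta-1)E$ is precisely to provide pullback sections from $\mathbb{P}^1$ that separate the various singular fibers.

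Fix a singular fiber $E_0$ with image $p = \pi(E_0)$ and a section $\sigma_0 \in H^0(S_0, L+(12\beta-1)E)$. By Lemma~\ref{degreeS}, $\deg(S) = 12\beta$; since every singular fiber contains at least one singular point, there are at most $12\beta - 1$ singular fibers distinct from $E_0$. I would therefore choose $f \in H^0(\mathbb{P}^1, \mathcal{O}(12\beta-1))$ vanishing at each of the (at most $12\beta - 1$) images of the other singular fibers and satisfying $f(p) \neq 0$; a polynomial of degree $12\beta - 1$ with prescribed zeros at up to $12\beta - 1$ points and a nonzero value at one additional point certainly exists.

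Next, trivializing $\mathcal{O}((12\beta-1)E) = \pi^*\mathcal{O}_{\mathbb{P}^1}(12\beta-1)$ on a neighborhood of $E_0$ by a local generator at $p$ yields an isomorphism $(L+(12\beta-1)E)|_{S_0} \cong L|_{S_0}$; through it $\sigma_0$ corresponds to some $\sigma_0' \in H^0(S_0, L)$, which the hypothesis lifts to $\tilde\sigma \in H^0(X, L)$. Setting $\tau = \tilde\sigma \otimes \pi^*f \in H^0(X, L+(12\beta-1)E)$ and observing that $\pi^*f$ restricts to the nonzero constant $f(p)$ on $E_0$ (hence on $S_0$) in the chosen trivialization, while vanishing identically on every other singular fiber $E_{0'}$ (hence on $S_{0'}$), one obtains $\tau|_{S_0} = f(p)\cdot\sigma_0$ and $\tau|_{S_{0'}} = 0$ for all $E_{0'} \neq E_0$. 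After rescaling by $1/f(p)$ and summing the resulting lifts over all singular fibers, the desired surjectivity follows.

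The main technical point to treat carefully is the degree count: the hypothesis $\deg(S) = 12\beta$ is precisely what makes the twist $(12\beta-1)E$ suffice, rather than requiring something larger. Everything else is formal: disjointness of the $S_0$'s reduces the problem to one summand at a time, the hypothesis supplies global lifts on $X$, and pullback from $\mathbb{P}^1$ produces the cutoff sections needed to isolate each singular fiber.
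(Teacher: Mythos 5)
Your proposal is correct and follows essentially the same route as the paper: decompose $H^0(S,L+(12\beta-1)E)$ over the (at most $12\beta$) singular fibers, isolate each summand with a section of $(12\beta-1)E$ pulled back from $\mathbb{P}^1$ vanishing on the other singular fibers, and multiply by a lift of a section on $S_0$ provided by the hypothesis. Your handling of the trivialization of $(12\beta-1)E$ along $E_0$ is in fact slightly more careful than the paper's scaling argument, but the idea is identical.
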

\begin{proof}
Lemma \ref{degreeS} shows $\mathrm{degree}(S)=12\beta$. This implies the number of singular fibers is no more than $12\beta$. Let $E_1, E_2 \ldots, E_m$ be all the singular fibers of $\pi$ and $S_i=S\cap E_i$, where $m\leq 12\beta$. Then $$H^0(S, L+(12\beta-1)E)=\oplus_{i=1}^mH^0(S_i, L+(12\beta-1)E).$$ A base element in $H^0(S_i, L+(12\beta-1)E)$ corresponds to a section of $L+(12\beta-1)E$ on $S$ which is nonzero at a singular point in one singular fiber $E_i$ and zero at all singular points in other singular fibers. Thus any section of $L+(12\beta-1)E$ on $S$ can be write as a sum of the such sections. In order to show $\theta$ is surjective, it is sufficient to show such a section has a preimage in $H^0(X, L+(12\beta-1)E)$.

\smallskip

We arbitrarily pick a singular fiber from $E_1, E_2 \ldots, E_m$. Without loss of generality, we assume the singular fiber is $E_1$. Let   $E_2+\cdots+E_m+E_{m+1}+E_{12\beta-1}$ be an element in the linear system $|(12\beta-1)E|$, where all fibers $E_i$, $i=2,\ldots, 12\beta-1$ are different from $E_1$. Then there is a section $g\in H^0(X, (12\beta-1)E)$ such that the zero locus $(g)_0=E_2+\cdots+E_{12\beta-1}$. In other words, section $g$ vanishing on $E_i,i=2,\ldots,12\beta-1$ has no zero point on $E_1$.

\smallskip

Let $p$ be a singular point on $E_1$ and $s$ be a section of $L+(12\beta-1)E$ on $S$ which is nonzero at $p$ and zero at other singular points in $E_2,\ldots, E_{12\beta-1}$. We pick a base element $f\in H^0(S_1, L)$ such that $f(p)\neq 0$. By assumption, the restriction $H^0(X,L)\rightarrow H^0(S_1, L)$ is surjective. Then there is a section $\overline{f}\in H^0(X,L)$ such that $\overline{f}|_{S_1}=f$. Hence $\overline{f}(p)=f(p)\neq 0$. Now we get a section $g\otimes \overline{f}$ of $L+(12\beta-1)E$ on $X$ which is nonzero at $p$ and vanishes at other singular points in singular fibers $E_2,\ldots, E_{12\beta-1}$. Thus $s$ equals the restriction of $\frac{s(p)}{g(p)\overline{f}(p)}g\otimes \overline{f}$ to $S$.

\end{proof}

\begin{proof}[{\bf Proof of Theorem \ref{Thr>1}.}]

First, we show $H^1(X,\Omega_{X}^1\otimes A)\neq 0$ under the assumption that $A-\beta E$ is nef and big, and $\pi$ has a fiber of type $\mathrm{II}$ with $r=1$, a fiber of type $\mathrm{III}$ with $r=2$ or a fiber of type $\mathrm{IV}$ with $r=3$.

\smallskip

Let $E_0$ be a given fiber of type $\mathrm{II}$ when $r=1$, of type $\mathrm{III}$ when $r=2$, or of type $\mathrm{IV}$ when $r=3$.
Let $S_0$ be the connected component of $S$ supported at the singular point on $E_0$. By Remark \ref{degreefi}, the degree of $S_0$ is $2$ if the singular point is on a fiber of type $\mathrm{II}$ of $\pi$, the degree of $S_0$ is $3$ if the singular point is on a fiber of type $\mathrm{III}$ of $\pi$ and the degree of $S_0$ is $4$ if the singular point is on a fiber of type $\mathrm{IV}$. Thus $\mathrm{degree}(S_0)=r+1$. Hence $h^0(S_0,A+\beta E)=r+1$.

\smallskip

The line bundle $A+\beta E$ is ample since $A$ is ample and $E$ is nef.
We have $(A+\beta E)\cdot E_0=A\cdot E_0+\beta E\cdot E_0=r$, that is $A+\beta E$ has degree $r$ on the given fiber $E_0$. By Lemma \ref{h^0=r}, we have $h^0(E_0,A+\beta E)=r<h^0(S_0,A+\beta E)$. Therefore $H^0(X,A+\beta E)$ cannot map onto $H^0(S_0,A+\beta E)$ by restriction, which implies $S$ does not impose independent conditions on sections of $A+\beta E$. Since $A-\beta E$ is nef and big, the cohomology space $H^1(X,\Omega_{X}^1\otimes A)$ is not zero by Proposition \ref{Simpose}.

\smallskip

Conversely, we aim to show $H^1(X,\Omega_{X}^1\otimes A)=0$ under the assumption that $A-\beta E$ is nef and big, $h^0(L)-h^0(L-E)=r$, $\pi$ has no fiber of type $\mathrm{II}$ when $r=1$, $\pi$ has no fiber of type $\mathrm{III}$ when $r=2$ and $\pi$ has no fiber of type $\mathrm{IV}$ when $r=3$. By Proposition \ref{Simpose}, it is equivalent to show $S$ imposes linearly independent conditions on sections of $A+\beta E=L+(12\beta-1)E$.

\smallskip

Arbitrarily choose a fiber $E_0$ of $\pi$.
The short exact sequence $0\rightarrow\mathcal{O}(L-E_0)\rightarrow\mathcal{O}_X(L)\rightarrow\mathcal{O}_{E_0}(L)\rightarrow0$ induce long exact sequence
$$0\rightarrow H^0(X,L-E_0)\rightarrow H^0(X,L)\rightarrow H^0(E_0,L)\rightarrow\cdots.$$ By assumption that $h^0(L)-h^0(L-E)=r$, the restriction $H^0(X,L)\rightarrow H^0(E_0,L)$ has image of dimension $r$. Since $\mathcal{O}(E)$ restricted to $E_0$ is trivial, the line bundle $L$ restricted to $E_0$ equals $A$ restricted to $E_0$. Thus $L$ is ample on $E_0$. Then by Lemma \ref{h^0=r}, the dimension of $H^0(E_0,L)$ is $r$. Thus $H^0(X,L)$ restrict onto $H^0(E_0,L)$ for any fiber $E_0$ of $\pi$.

\smallskip

Let $E_0$ be a singular fiber of the elliptic fibration $\pi$ and $S_0=S\cap E_0$.
Since $\mathrm{deg}_{E_0}L=r$, number of irreducible components of $E_0$ is no more that $r$. Then $E_0$ can be of type $\mathrm{II}$ only when $r\geq2$; of type $\mathrm{III}$ only when $r\geq3$; of type $\mathrm{IV}$ only when $r\geq4$ and of type $I_n$ with $n\geq1$ only when $r\geq n$.
By Lemma \ref{II}, \ref{III}, \ref{IV}, \ref{I_n}, the restriction map from $H^0(E_0,L)$ to $H^0(S_0,L)$ is surjective for all these possible cases. Thus $H^0(X,L)$ maps onto $H^0(S_0,L)$ by restriction. Then by Lemma \ref{XrestriontoS}, the restriction from $H^0(X,L+(12\beta-1)E)$ to $H^0(S,L+(12\beta-1)E)$ is surjective, that is $S$ imposes linearly independent conditions on sections of $A+\beta E=L+(12\beta-1)E$. Therefore $H^1(X,\Omega_{X}^1\otimes A)=0$ by Proposition \ref{Simpose}.
\end{proof}

\section{Examples of elliptic fibrations}\label{examples}
In this section, Let $X$ be a smooth complex projective surface with an elliptic fibration $\pi:X\rightarrow \mathbb{P}^1$ and ample line bundle $A$ on $X$. Let $r$ be the intersection number $A\cdot E$ , where $E$ is a fiber of $\pi$. In this section, we explore elliptic surfaces in the cases when $r=1,2,3,4$ and apply Theorem \ref{Thr=1} and Theorem \ref{Thr>1}.

\subsection{$A\cdot E=1$}\label{e.g.r=1}
{\bf Elliptic fibrations with $r=A\cdot E=1$ for an ample line bundle $A$ on the elliptic surfaces, where $E$ is a fiber.}

\smallskip

Let $X$ be a smooth complex projective surface with an elliptic fibration $\pi:X\rightarrow \mathbb{P}^1$ and $A$ be an ample line bundle on $X$ with $A\cdot E=1$ for a fiber $E$ of $\pi$. By Lemma \ref{lemmar=1}, the ample line bundle $A$ is linear equivalent to $A_0+mE$ for a section $A_0$ of $\pi$ and some integers, satisfying $A_0^2=-\beta$, where $\beta$ be the Euler characteristic of the structure sheaf of $X$.
If $m\geq\frac{11}{2}\beta$, then $A^2=-\beta+2m\geq10\beta$. Remark \ref{Euleromega1}, So there are possibilities that Bott vanishing holds for $(X, A)$. Then Theorem \ref{Thr=1} can give a criterion for when the Bott vanishing holds for $(X, A)$ if $m$ is big enough.

\begin{proposition}\label{r=1iff}
Let $X$ be a smooth complex projective surface with an elliptic fibration $\pi:X\rightarrow \mathbb{P}^1$ and an ample line bundle $A=A_0+mE$, where $A_0$ is a section of $\pi$ and $E$ is a fiber of $\pi$. Let $\beta$ be the Euler characteristic of the structure sheaf of $X$. When $m> 11\beta-1$, Bott vanishing holds for $(X, A)$ if and only if $\pi$ has no fibers of type $\mathrm{II}$.
\end{proposition}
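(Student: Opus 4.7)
The plan is to reduce this to the case-$r{=}1$ criterion already established in Theorem \ref{Thr=1}, going through the Bott-vanishing reduction of Proposition \ref{pairbott}. The hypothesis $m > 11\beta - 1$ is far stronger than what either of those statements requires, so most of the work is checking that the machinery applies; the statement itself then falls out from a single numerical inequality.

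First I would verify that $A - (\beta-2)E$ is nef and big, so that Proposition \ref{pairbott} reduces Bott vanishing for $(X,A)$ to the single vanishing $H^1(X,\Omega^1_X\otimes A)=0$. Writing $A - (\beta-2)E = A_0 + (m-\beta+2)E$, I would test nefness on any irreducible curve $C$: if $C \ne A_0$ then $A_0\cdot C \ge 0$ and $E \cdot C \ge 0$, while $m-\beta+2 > 0$ by the hypothesis (using $\beta \ge 0$ from Lemma \ref{beta>0}), so the intersection is nonnegative; if $C = A_0$ then the intersection equals $-\beta + (m-\beta+2) = m - 2\beta + 2$, which is positive since $m > 11\beta - 1 \ge 2\beta - 2$. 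For bigness I compute $(A - (\beta-2)E)^2 = -\beta + 2(m-\beta+2) = 2m - 3\beta + 4 > 0$, again from $m > 11\beta - 1$.

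Next I would invoke Theorem \ref{Thr=1}, which applies because $A\cdot E = (A_0 + mE)\cdot E = A_0 \cdot E = 1$ and all fibers are reduced by definition. Theorem \ref{Thr=1} says $H^1(X,\Omega^1_X\otimes A)\ne 0$ if and only if $A^2 \le 21\beta - 3$ or $\pi$ has a fiber of type $\mathrm{II}$. Using Lemma \ref{lemmar=1} we have $A^2 = -\beta + 2m$, and the hypothesis $m > 11\beta - 1$ (with $m$ an integer, so $m \ge 11\beta$) gives $A^2 \ge 21\beta > 21\beta - 3$. Hence the self-intersection clause is vacuous, and $H^1(X,\Omega^1_X\otimes A) \ne 0$ is equivalent to the existence of a fiber of type $\mathrm{II}$.

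Combining the two reductions yields the proposition: $(X,A)$ satisfies Bott vanishing iff $H^1(X,\Omega^1_X\otimes A) = 0$ iff $\pi$ has no fiber of type $\mathrm{II}$. There is no real obstacle; the only point requiring care is the numerical check that $m > 11\beta - 1$ forces both the nef-and-big hypothesis of Proposition \ref{pairbott} and the self-intersection bound of Theorem \ref{Thr=1} simultaneously.
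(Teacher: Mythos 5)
Your proposal is correct and follows essentially the same route as the paper: verify that $A-K_X=A_0+(m-\beta+2)E$ is nef and big so that Proposition \ref{pairbott} reduces Bott vanishing to $H^1(X,\Omega^1_X\otimes A)=0$, then apply Theorem \ref{Thr=1} after noting $A^2=2m-\beta>21\beta-3$. The only cosmetic difference is that you check nefness and bigness of $A-K_X$ directly on curves, while the paper reuses the nef-and-big check of $A-\beta E$ from the proof of Theorem \ref{Thr=1} and adds $2E$; both computations are equivalent.
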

\begin{proof}


By Lemma \ref{Kx}, the canonical bundle $K_X=(\beta-2)E$. We consider the line bundle $A-K_X=A-\beta E+2E=A_0+(m-\beta+2)E$. By Lemma \ref{beta>0}, we know $\beta\geq0$. If $\beta=0$, then $A-K_X=A+2E$ is ample since $A$ is ample and $E$ is nef, thus is nef and big. If $\beta\geq 1$, then $A^2=A_0^2+2m=2m-\beta>2(11\beta-1)-\beta=21\beta-2>3 \beta$. It has been checked in the first paragraph in the proof of Theorem \ref{Thr=1} that line bundle $A-\beta E$ is nef and big when $A^2\geq3 \beta$. Thus $A-K_X=(A-\beta E)+2E$ is nef since $E$ is nef. Also $(A-K_X)^2=A_0^2+2(m-\beta+2)=2m-3\beta+4>2(11\beta-1)-3\beta+4=(22-3)\beta+2>0$, then  $A-K_X$ is big.
By Proposition \ref{pairbott}, we only need to check $H^1(X,\Omega_{X}^1\otimes A)$.
When $m> 11\beta-1$, we have $A^2>21\beta-3$.
By Theorem \ref{Thr=1}, the cohomology space $H^1(X,\Omega_{X}^1\otimes A)= 0$ if and only if $\pi$ has no fibers of type $\mathrm{II}$.
\end{proof}

An elliptic surface $X$ over $\mathbb{P}^1$ with a section can be described by a Weierstrass data $$(\mathbb{L}, \lambda, \mu)$$ over $\mathbb{P}^1$, where $\mathbb{L}=\mathcal{O}_{\mathbb{P}^1}(\beta)$ and $(\lambda, \mu)$ are global sections of $\mathbb{L}^4\oplus \mathbb{L}^6$ with discriminant $\Delta=4\lambda^3+27\mu^2$ not identically $0$ (see \cite[II-IV]{Miranda}), such that locally the fiber over $t$ in $\mathbb{P}^1$ is given by the Weierstrass equation $$y^2=x^3+\lambda(t)x+\mu(t).$$ The polynomials $\lambda$ and $\mu$ uniquely determine an elliptic surface $X$ over $\mathbb{P}^1$ with a section.

\smallskip

\cite[III.3]{Miranda} shows that Weierstrass fibrations over $\mathbb{P}^1$ in minimal form is one-to-one correspondence with the smooth minimal elliptic surfaces over $\mathbb{P}^1$ with section (possibly with non-reduced fibers).
Let $\nu_{p}(s)$ be the order of vanishing of a section $s$ of line bundle at a point $p$.
If for every $p\in \mathbb{P}^1$, either $\nu_{p}(\lambda)\leq3$ or $\nu_{p}(\mu)\leq5$, then Weierstrass data $(\mathbb{L}, \lambda, \mu)$ over $\mathbb{P}^1$ is in minimal form, thus the corresponding elliptic surface is minimal.
Moreover, the properties of $X$ are determined by $\lambda$ and $\mu$ in a slightly complicated way.
Table in \cite[IV.3.1]{Miranda} explains how to read off the types of singular fibers of elliptic fibration $\pi$ from the polynomials $\lambda$ and $\mu$.

\smallskip

Assume
$\beta\geq 1$.
And assume the elliptic fibration $\pi:X\rightarrow \mathbb{P}^1$ with section $A_0$ is determined by $\lambda\in H^0(\mathbb{P}^1, \mathcal{O}_{\mathbb{P}^1}(4\beta))$ and $\mu\in H^0(\mathbb{P}^1, \mathcal{O}_{\mathbb{P}^1}(6\beta))$ such that $\lambda=t^{4\beta}$ and $\mu=t^{6\beta}+t$ in an affine open set of $\mathbb{P}^1$. Since $\mu$ is nonzero at most points of $\mathbb{P}^1$ and the order of vanishing of $\mu$ at each zero point is one, we have $\nu_{p}(\mu)\leq 5$ for every $p\in \mathbb{P}^1$. Thus the elliptic surface $X$ is minimal. Locally, we have $$\Delta=4t^{12\beta}+27(t^{12\beta}+t^2+2t^{6\beta+1}).$$  Let $a, b, \delta$ be the order of vanishing of $\lambda$, $\mu$ and $\Delta$ at $t=0$ respectively. Then we have $\delta=2$, $a=4\beta\geq 1$ and $b=1$. Using the table in \cite[IV.3.1]{Miranda}, the elliptic fibration $\pi$ has a fiber of type $\mathrm{II}$ over $t=0$. By Proposition \ref{r=1iff}, Bott vanishing fails for $(X, A)$, where $A=A_0+mE$ with $m>11\beta-1$.

\subsection{$A\cdot E=2$}\label{e.g.r=2}
{\bf Elliptic fibrations with $r=A\cdot E=2$ for an ample line bundle $A$ on the elliptic surfaces, where $E$ is a fiber.}

\smallskip

Let $X$ be a smooth complex projective surface with a double cover $$\eta:X\rightarrow Z=\mathbf{P}_1\times \mathbf{P}_2$$ ramified over a smooth curve $B$ of bidegree $(2l,4)$ in $Z$ for some positive integer $l$, where $\mathbf{P}_1= \mathbb{P}^1= \mathbf{P}_2$. Let $p_1:Z\rightarrow \mathbf{P}_1$ and $p_2:Z\rightarrow \mathbf{P}_2$ be the two projections, $\pi=p_1\circ\eta$ and $f=p_2\circ\eta$. For a fiber $E$ of $\pi$, the morphism $f|_E: E\rightarrow \mathbf{P}_2$ obtained by restricting $f$ to $E$ is a double cover ramified over four points. Then $f|_E$ has degree $2$ and the ramification divisor $R_E$ of $f|_E$ has degree $4$. By Hurwitz's theorem (\cite[IV. Corollary 2.4]{Hartshorne}), we have $$2g(E)-2=\mathrm{deg}(f|_E)(2g(\mathbb{P}^1)-2)+\mathrm{deg}(R_E)=0,$$ which implies the fiber $E$ has arithmetic genus $g(E)=1$. Thus $\pi$ is an elliptic fibration.

\smallskip

By Leray spectral sequence, the cohomology spaces $H^i(X, \mathcal{F})=H^i(Z, \eta_*\mathcal{F})$ for a coherent sheaf $\mathcal{F}$ on $X$.
Since $\eta_*\mathcal{O}_X=\mathcal{O}_Z\oplus L$ (\cite[Lemma 17.1]{BPV}), where $L=\mathcal{O}_{Z}(-l,-2)$, we have
\begin{equation}\label{tensorL}
H^i(X, \eta^*\mathcal{G})=H^i(Z, \eta_*\eta^*\mathcal{G})=H^i(Z, \mathcal{G}\oplus(\mathcal{G}\otimes L))
\end{equation}
for a coherent sheaf $\mathcal{G}$ on $X$. Since $\eta$ is double cover and $\mathcal{O}_Z(B)=\mathcal{O}_Z(2l,4)$, the canonical bundle (\cite[Lemma 17.2]{BPV}) $$\omega_X=\eta^*\omega_Z\otimes\frac{1}{2}\eta^*\mathcal{O}_Z(B)=\eta^*(\mathcal{O}_Z(-2,-2)\otimes\mathcal{O}_Z(l,2))=\mathcal{O}_X(-2+l,0)=\pi^*(\mathcal{O}_{\mathbb{P}^1}(l-2)).$$
Then cohomology space $H^i(X, \omega_X)$ equals $$H^i(Z, \mathcal{O}_Z(-2+l,0)\oplus(\mathcal{O}_Z(-2+l,0)\otimes L))=H^i(Z, \mathcal{O}_Z(-2+l,0))\oplus H^i(Z, \mathcal{O}_Z(-2,-2)).$$
By K\"unneth formula, since $l\geq1$, we get $h^0(\mathcal{O}_Z(-2+l,0))=-2+l+1$, $h^1(\mathcal{O}_Z(-2+l,0))=0$ and $h^i(\mathcal{O}_Z(-2,-2))=0$ for $i=1,2$. Then $h^0(X, \omega_X)=-2+l+1$ and $h^1(X, \omega_X)=0$.
Let $\beta=\chi(\mathcal{O}_X)$ be the Euler characteristic of the structure sheaf of $X$. We have
\begin{align*}
\beta &=h^0(X, \mathcal{O}_X)-h^1(X, \mathcal{O}_X)+h^2(X, \mathcal{O}_X)
\\&=h^0(X, \mathcal{O}_X)-h^1(X, \omega_X)+h^0(X, \omega_X)
\\&=1-h^1(Z,\mathcal{O}_Z(-2+l,0))+h^0(Z,\mathcal{O}_Z(-2+l,0))
\\&=1+(-2+l+1)=l.
\end{align*}
This matches with the result that $\omega_X=\pi^*(\mathcal{O}_{\mathbb{P}^1}(\beta-2))$ in Lemma \ref{Kx}.

Let $A$ be the ample line bundle $\mathcal{O}_X(m,1)=\eta^*p_1^*\mathcal{O}_{\mathbf{P}_1}(m)\otimes \eta^*p_2^*\mathcal{O}_{\mathbf{P}_2}(1)$. The following lemma computes the intersection number of the ample line bundle with a fiber of $\pi$.

\begin{lemma}\label{A.E=2}
Let $E$ be any fiber of $\pi:X\rightarrow \mathbf{P}_1=\mathbb{P}^1$. The intersection number $A\cdot E=2$ and $A^2=4m$, where $A=\mathcal{O}_X(m,1)$.
\end{lemma}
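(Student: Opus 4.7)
The plan is to reduce the intersection computation on $X$ to one on $Z=\mathbf{P}_1\times\mathbf{P}_2$ via the projection formula for the degree-$2$ cover $\eta$. The key identity I would record first is that, for any line bundles $L,M$ on $Z$, the projection formula combined with $\eta_*\eta^*M=(\deg\eta)M=2M$ gives
\[
\eta^*L\cdot\eta^*M \;=\; 2(L\cdot M)
\]
on $X$. Since $A=\eta^*\mathcal{O}_Z(m,1)$ by definition, both $A^2$ and $A\cdot E$ will be expressible as intersection numbers on $Z$ provided the fiber class of $\pi$ is also identified as a pullback.

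To that end, I would show $E=\eta^*F$ for $F$ a fiber of $p_1$. All fibers of $\pi$ are linearly equivalent, so it suffices to take a generic $t\in\mathbf{P}_1$ and set $F=\{t\}\times\mathbf{P}_2$; for generic $t$ the divisor $F$ is not contained in the branch curve $B$ (which has bidegree $(2l,4)$ and meets a generic fiber of $p_1$ in only four points), so $\eta^{-1}(F)$ is reduced and $\eta^*\mathcal{O}_Z(F)=\mathcal{O}_X(E)$. Writing $h_i=p_i^*\mathcal{O}_{\mathbf{P}_i}(1)$, this means the class of $E$ on $X$ is $\eta^*h_1$.

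On $Z$ one has $h_1^2=h_2^2=0$, $h_1\cdot h_2=1$, and $\mathcal{O}_Z(m,1)=mh_1+h_2$. Combining these with the displayed identity gives
\begin{align*}
A\cdot E &= \eta^*(mh_1+h_2)\cdot\eta^*h_1 = 2\bigl((mh_1+h_2)\cdot h_1\bigr) = 2,\\
A^2 &= \eta^*(mh_1+h_2)\cdot\eta^*(mh_1+h_2) = 2(mh_1+h_2)^2 = 4m,
\end{align*}
which finishes the proof. The argument is essentially bookkeeping with the projection formula, so no serious obstacle is expected; the only subtlety worth making explicit is the reducedness of $\eta^{-1}(F)$ for a generic fiber, which is handled above by observing $F\not\subset B$.
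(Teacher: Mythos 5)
Your proposal is correct and is essentially the paper's argument: both reduce the computation to the cohomology ring of $Z=\mathbf{P}_1\times\mathbf{P}_2$ and use the fact that $\eta$ has degree $2$, so pulled-back classes satisfy $\eta^*L\cdot\eta^*M=2(L\cdot M)$ (the paper writes this as $\overline{D}_1\cdot\overline{D}_2=2[\cdot]$), giving $A\cdot E=2$ and $A^2=4m$. Your extra care about reducedness of $\eta^{-1}(F)$ is unnecessary but harmless, since $\mathcal{O}_X(E)=\pi^*\mathcal{O}_{\mathbf{P}_1}(1)=\eta^*h_1$ already as line bundles because $\pi=p_1\circ\eta$.
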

\begin{proof}
Let $D_1, D_2\in H^2(\mathbf{P}_1\times \mathbf{P}_2, \mathbb{Z})$ be the first Chern class of $p_1^*(\mathcal{O}_{\mathbf{P}_1}(1))$ and $P_2^*(\mathcal{O}_{\mathbf{P}_2}(1))$ respectively. Then the cohomology ring $ H^*(\mathbf{P}_1\times \mathbf{P}_2, \mathbb{Z})$ is generated by $D_1, D_2$ with relations $D_1^2=0$, $D_2^2=0$ and $D_1\cdot D_2$ equals the class of a point $[\cdot]\in H^4(\mathbf{P}_1\times \mathbf{P}_2, \mathbb{Z})$. Let $\overline{D}_1=\eta^*D_1$ and $\overline{D}_2=\eta^*D_2$. Then in the cohomology ring $ H^*(X, \mathbb{Z})$, we have the relations $\overline{D}_1^2=0$, $\overline{D}_2^2=0$ and $\overline{D}_1\cdot \overline{D}_2=2[\cdot]$ since $\eta$ is a double cover, where $[\cdot]$ is the class of a point in $H^4(X, \mathbb{Z})$.

\smallskip

The class corresponding to divisor $A$ in $ H^2(X, \mathbb{Z})$ is $[A]=m\overline{D}_1+\overline{D}_2$ and
the class corresponding to fiber $E$ in $ H^2(X, \mathbb{Z})$ is $[E]=\overline{D}_1$. Thus
\begin{align*}
&(m\overline{D}_1+\overline{D}_2)\cdot \overline{D}_1=2[\cdot],
\\&(m\overline{D}_1+\overline{D}_2)^2=2m\overline{D}_1\cdot \overline{D}_2=4m[\cdot].
\end{align*}
Therefore the intersection number $A\cdot E=2$ and $A^2=4m$.
\end{proof}

 Let $A=\mathcal{O}_X(m,1)$, if $m\geq 3\beta$, then $A^2=4m\geq4(3\beta)>10\beta$. By Remark \ref{Euleromega1}, it is possible that Bott vanishing holds for $(X, A)$. We will give a criterion for when Bott vanishing holds for $(X, A)$ if $m$ is big enough.

\begin{proposition}\label{H1=0r=2}
With the same notations mentioned above,
let $X$ be the elliptic surface with a double cover $\eta:X\rightarrow Z=\mathbf{P}_1\times \mathbf{P}_2$ ramified over a smooth curve of bidegree $(2l,4)$ in $Z$ for some positive integer $l$ and $A$ be the ample line bundle $\mathcal{O}_X(m,1)$ on $X$. Assume and $m\geq11\beta-1$. Then $H^1(X,\Omega_{X}^1\otimes A)= 0$ if and only if $\pi$ has no fibers of type $\mathrm{III}$.
\end{proposition}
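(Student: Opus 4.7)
The plan is to invoke Theorem \ref{Thr>1} with $r = 2$ and deduce the equivalence from it. Lemma \ref{A.E=2} already supplies $A \cdot E = 2$, so two hypotheses of the theorem remain to verify: that $A - \beta E$ is nef and big, and that $h^0(X, L) - h^0(X, L - E) = 2$ where $L = A - (11\beta - 1)E$. Once both are in place, Theorem \ref{Thr>1} with $r = 2$ gives exactly the ``$H^1 \neq 0$ iff $\pi$ has a type III fiber'' equivalence asserted.

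The first hypothesis is the easier of the two. A fiber of $\pi$ is the preimage of a point of $\mathbf{P}_1$, so $E \sim \eta^*\mathcal{O}_Z(1, 0)$; combined with $\beta = l$ established earlier in the section, this gives $A - \beta E = \eta^*\mathcal{O}_Z(m - l, 1)$. The hypothesis $m \geq 11l - 1$ with $l \geq 1$ forces $m - l \geq 10l - 1 \geq 9$, so $\mathcal{O}_Z(m - l, 1)$ is ample on $Z$ and its pullback under the finite morphism $\eta$ is ample on $X$, hence nef and big.

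For the cohomological hypothesis I would write $L = \eta^*\mathcal{O}_Z(m - 11l + 1, 1)$ and apply formula (\ref{tensorL}) together with $\eta_*\mathcal{O}_X = \mathcal{O}_Z \oplus \mathcal{O}_Z(-l, -2)$:
\begin{align*}
h^0(X, L) &= h^0(Z, \mathcal{O}_Z(m-11l+1, 1)) + h^0(Z, \mathcal{O}_Z(m-12l+1, -1)), \\
h^0(X, L - E) &= h^0(Z, \mathcal{O}_Z(m-11l, 1)) + h^0(Z, \mathcal{O}_Z(m-12l, -1)).
\end{align*}
The summands with $-1$ in the $\mathbf{P}_2$-factor vanish by Künneth. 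The surviving summands equal $2(a+1)$ when $a \geq 0$ and $0$ when $a = -1$, so in every allowed range the difference comes out to exactly $2$, matching $r$.

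I do not expect a serious obstacle: everything reduces to Künneth on $\mathbb{P}^1 \times \mathbb{P}^1$ and an ampleness check on $Z$. The only point that demands a moment of care is the boundary case $m = 11l - 1$, in which $h^0(X, L - E) = 0$ because $\mathcal{O}_Z(-1, 1)$ has no global sections; one must check by hand that the difference $h^0(X, L) - h^0(X, L - E) = 2 - 0 = 2$ is still the required value before declaring Theorem \ref{Thr>1} applicable.
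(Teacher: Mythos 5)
Your proposal is correct and follows essentially the same route as the paper: verify that $A-\beta E$ is nef and big, compute $h^0(L)-h^0(L-E)=2$ via the pushforward formula (\ref{tensorL}) and K\"unneth (including the boundary case $m=11\beta-1$, which the paper absorbs into the uniform formula $h^0(\mathbb{P}^1,\mathcal{O}(d))=d+1$ for $d\geq -1$), and then apply Theorem \ref{Thr>1} with $r=A\cdot E=2$ from Lemma \ref{A.E=2}.
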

\begin{proof}
Since $m\geq11\beta-1$, the line bundle $A-\beta E$ equals $\mathcal{O}_X(m-\beta,1)=\eta^*p_1^*\mathcal{O}_{\mathbf{P}_1}(m-1)\otimes \eta^*p_2^*\mathcal{O}_{\mathbf{P}_2}(1)$ is ample, thus is nef and big. Let $L=A-(11\beta-1)E=\mathcal{O}_X(m-(11\beta-1),1).$ By Equation \ref{tensorL}, we have $$H^0(L)=H^0(Z, \mathcal{O}_Z(m-(11\beta-1),1))\oplus H^0(Z, \mathcal{O}_Z(m-(11\beta-1)-l,-1)).$$ Then $h^0=2(m-(11\beta-1)+1)$ by K\"unneth formula. Since $m-(11\beta-1)-1\geq -1$, we have $h^0(\mathbb{P}^1, \mathcal{O}(m-(11\beta-1)-1))=m-(11\beta-1)$. Similarly, by K\"unneth formula and Equation \ref{tensorL}, we obtain $h^0(L-E)=2(m-(11\beta-1))$. Thus $h^0(L)-h^0(L-E)=2$ which equals $A\cdot E$ by Lemma \ref{A.E=2}.
By Theorem \ref{Thr>1}, the cohomology space $H^1(X,\Omega_{X}^1\otimes A)= 0$ if and only if $\pi$ has no fibers of type $\mathrm{III}$.
\end{proof}

\begin{proposition}
With the same notations mentioned above,
let $X$ be the elliptic surface with a double cover $\eta:X\rightarrow Z=\mathbf{P}_1\times \mathbf{P}_2$ ramified over a smooth curve of bidegree $(2l,4)$ in $Z$ for some positive integer $l$ and $A$ be the ample line bundle $\mathcal{O}_X(m,1)$ on $X$. Assume $m\geq11\beta-1$. Then $(X, A)$ satisfies Bott vanishing if and only if $\pi$ has no fibers of type $\mathrm{III}$.
\end{proposition}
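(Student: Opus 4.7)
The plan is to combine Proposition \ref{pairbott} with the previous Proposition \ref{H1=0r=2} to reduce Bott vanishing for the pair $(X,A)$ to the sole question of whether $H^1(X,\Omega^1_X\otimes A)$ vanishes, and then quote the explicit criterion already established for $r=2$.

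First, I would verify the hypothesis of Proposition \ref{pairbott}, namely that $A-(\beta-2)E$ is nef and big. By Lemma \ref{Kx}, the expression $A-(\beta-2)E$ is just $A-K_X$, and after unraveling the definitions it equals $\mathcal{O}_X(m-\beta+2,1)=\eta^{*}\mathcal{O}_Z(m-\beta+2,1)$. By Lemma \ref{beta>0} we have $\beta\geq 0$, so the assumption $m\geq 11\beta-1$ gives $m-\beta+2\geq 10\beta+1\geq 1$. Hence $\mathcal{O}_Z(m-\beta+2,1)$ is ample on $Z=\mathbb{P}^1\times\mathbb{P}^1$, and since $\eta$ is a finite morphism its pullback to $X$ is ample, in particular nef and big. (If one prefers to avoid the finite-pullback argument, the same line bundle may be written as $(A-\beta E)+2E$ and the first summand is checked to be ample directly.)

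With this in hand, Proposition \ref{pairbott} tells me that $(X,A)$ satisfies Bott vanishing if and only if $H^1(X,\Omega^1_X\otimes A)=0$. Now Lemma \ref{A.E=2} gives $r=A\cdot E=2$, so this is exactly the regime treated in Proposition \ref{H1=0r=2}: under the standing assumption $m\geq 11\beta-1$, the cohomology group $H^1(X,\Omega^1_X\otimes A)$ vanishes if and only if $\pi$ has no singular fiber of type $\mathrm{III}$. Concatenating the two equivalences yields the desired statement.

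I do not anticipate any serious obstacle: both ingredients have been proved above, and the only work is the nef-and-big check for $A-K_X$, which is immediate from $m\geq 11\beta-1$ and $\beta\geq 0$. The only subtlety to be careful about is not to confuse the Bott-vanishing threshold $A-(\beta-2)E$ of Proposition \ref{pairbott} with the stronger threshold $A-\beta E$ needed to invoke Proposition \ref{Simpose} (and thus Proposition \ref{H1=0r=2}); both hold here since $m\geq 11\beta-1$ comfortably exceeds either requirement.
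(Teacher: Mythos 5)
Your proposal is correct and follows essentially the same route as the paper: verify that $A-K_X=\mathcal{O}_X(m-\beta+2,1)$ is ample (hence nef and big), then combine Proposition \ref{pairbott} with Proposition \ref{H1=0r=2}. The only cosmetic difference is that the paper uses $\beta=l\geq 1$ directly rather than appealing to Lemma \ref{beta>0}.
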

\begin{proof}


Since $\beta=l\geq1$, the assumption $m\geq11\beta-1$ implies $m-\beta+2>0$. Thus $A-K_X=A-(\beta-2)E=\mathcal{O}_X(m-\beta+2,1)$ is ample, so is nef and big.
Then the conclusion follows from Proposition \ref{pairbott} and Proposition \ref{H1=0r=2}.
\end{proof}

\subsection{$A\cdot E=3$}\label{e.g.r=3}
{\bf Elliptic fibrations with $r=A\cdot E=3$ for an ample line bundle $A$ on the elliptic surfaces, where $E$ is a fiber.}

Let $X$ be a smooth hypersurface $X_{a,3}\subset Z=\mathbb{P}^1\times \mathbb{P}^2$ of bidegree $(a,3)$, where $a$ is a positive integer. Let $p,q$ be the projection from $Z$ to $\mathbb{P}^1$ and $\mathbb{P}^2$ respectively. Let $\pi$ be the restriction of $p$ to $X_{a,3}$. A general fiber of $\pi$ is an elliptic curve since it is a degree $3$ hypersurface in $\mathbb{P}^2$. Thus $\pi:X=X_{a,3}\rightarrow \mathbb{P}^1$ is an elliptic fibration. The canonical line bundle $\omega_X$ equals $$(\omega_Z\otimes\mathcal{O}(X))|_X=\big(\mathcal{O}_Z(-2,-3)\otimes \mathcal{O}_Z(a,3)\big)|_X=\mathcal{O}_X(a-2,0)=\pi^*(\mathcal{O}_{\mathbb{P}^1}(a-2)).$$
If $a=1$, the dimension of $H^0(X, \mathcal{O}(nK_X))=H^0(\mathbb{P}^1, \mathcal{O}_{\mathbb{P}^1}(n(a-2)) )$ is $0$ for all $n\geq 1$ \cite[Example $\mathrm{VII}.3$]{A.Beau}. Hence $X$ has Kodaira dimension $\kappa=-\infty$. If $a=2$, the canonical bundle $\omega_X$ is trivial. Hence $X$ is a $K3$ elliptic surface with $\kappa=0$. The Bott vanishing for $K3$ elliptic surfaces is considered in \cite{BTbott}. In this paper, we are more interested in the case that $a\geq3$ in which $X$ has Kodaira dimension $\kappa=1$.

\smallskip

Let $\beta=\chi(\mathcal{O}_X)$ be the Euler characteristic of the structure sheaf of $X$. We have
\begin{align*}
\beta &=h^0(X, \mathcal{O}_X)-h^1(X, \mathcal{O}_X)+h^2(X, \mathcal{O}_X)
\\&=h^0(X, \mathcal{O}_X)-h^1(X, \omega_X)+h^0(X, \omega_X)
\\&=1-h^1(\mathbb{P}^1,\mathcal{O}_{\mathbb{P}^1}(a-2))+h^0(\mathbb{P}^1,\mathcal{O}_{\mathbb{P}^1}(a-2))
\\&=1-0+a-1=a.
\end{align*}
This matches with the result that $\omega_X=\pi^*(\mathcal{O}_{\mathbb{P}^1}(\beta-2))$ in Lemma \ref{Kx}.

\smallskip

Let $A$ be the ample line bundle $\mathcal{O}_X(m,1)=i^*p^*\mathcal{O}_{\mathbb{P}^1}(m)\otimes i^*q^*\mathcal{O}_{\mathbb{P}^2}(1)$, where $m\geq1$ and $i$ is the embedding from $X=X_{a,3}$ to $Z=\mathbb{P}^1\times \mathbb{P}^2$. The following lemma computes the intersection number of the ample line bundle with a fiber of $\pi$.

\begin{lemma}\label{A.E=3}
Let $E$ be any fiber of $\pi:X\rightarrow \mathbb{P}^1$. The intersection number $A\cdot E=3$ and $A^2=6+a$, where $A=\mathcal{O}_X(m,1)$.
\end{lemma}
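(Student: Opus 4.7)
The strategy is to carry out the intersection theory in the ambient $Z = \mathbb{P}^1 \times \mathbb{P}^2$, in complete parallel with the proof of Lemma \ref{A.E=2}. Both numbers are pullbacks of integrals on $Z$ weighted by the fundamental class of $X$, which by hypothesis is a bidegree $(a,3)$ hypersurface.

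First I would handle $A \cdot E$. A fiber $E$ of $\pi$ is the scheme-theoretic intersection of $X$ with a fiber $\{t\} \times \mathbb{P}^2$ of the first projection $p$. Because $X$ has bidegree $(a, 3)$, this intersection is a plane cubic curve in $\mathbb{P}^2$. The factor $i^*q^*\mathcal{O}_{\mathbb{P}^2}(1)$ of $A$ restricts to $\mathcal{O}_{\mathbb{P}^2}(1)|_E$ on such a fiber and hence has degree $3$, while the other factor $i^*p^*\mathcal{O}_{\mathbb{P}^1}(m)$ is trivial on $E$. This immediately yields $A \cdot E = 3$.

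Next I would handle $A^2$ by pushing forward to $Z$. Introduce the generators $D_1 = p^*\mathcal{O}_{\mathbb{P}^1}(1)$ and $D_2 = q^*\mathcal{O}_{\mathbb{P}^2}(1)$ of $H^2(Z, \mathbb{Z})$, so that the cohomology ring is $\mathbb{Z}[D_1, D_2]/(D_1^2, D_2^3)$ with $D_1 \cdot D_2^2$ equal to the class of a point. Since $X = X_{a,3}$ is a hypersurface of bidegree $(a, 3)$, its fundamental class is $[X] = a D_1 + 3 D_2$, and one has $A = (m D_1 + D_2)|_X$. The self-intersection on $X$ is then obtained by the projection formula: $A^2 = (m D_1 + D_2)^2 \cdot (a D_1 + 3 D_2)$ computed in $H^6(Z, \mathbb{Z})$. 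Expanding the product and applying the relations $D_1^2 = 0$, $D_2^3 = 0$, $D_1 D_2^2 = [\mathrm{pt}]$ produces the claimed value.

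There is no substantive obstacle. The computation is essentially the same Chow-ring argument that appeared in Lemma \ref{A.E=2}, with the only change being that the ambient second factor is $\mathbb{P}^2$ rather than $\mathbb{P}^1$, so the class of a point is $D_1 D_2^2$ rather than $D_1 D_2$. The only point that requires care is to check that the expansion of $(m D_1 + D_2)^2 \cdot (a D_1 + 3 D_2)$ really kills the $m^2$ term (via $D_1^2 = 0$) so that the answer is linear in $m$ and $a$.
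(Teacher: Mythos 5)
Your proposal is correct and follows essentially the same route as the paper: both reduce to the cohomology ring of $Z=\mathbb{P}^1\times\mathbb{P}^2$ with $[X]=aD_1+3D_2$ and the relations $D_1^2=0$, $D_2^3=0$, $D_1D_2^2=[\mathrm{pt}]$, your fiber-as-plane-cubic argument for $A\cdot E=3$ being just a geometric rephrasing of the same calculation. One remark: the expansion yields $A^2=6m+a$ (as in the paper's own proof and as used later in Proposition \ref{H1=0r=3}), so the value $6+a$ in the statement is a typo, which is consistent with your observation that the answer must be linear in $m$.
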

\begin{proof}
Let $D, B\in H^2(\mathbb{P}^1\times \mathbb{P}^2, \mathbb{Z})$ be the first Chern class of $p^*(\mathcal{O}_{\mathbb{P}^1}(1))$ and $q^*(\mathcal{O}_{\mathbb{P}^2}(1))$ respectively. Then the cohomology ring $ H^*(\mathbb{P}^1\times \mathbb{P}^2, \mathbb{Z})$ is generated by $D, B$ with relations $D^2=0$, $B^3=0$ and $D\cdot B^2$ equals the class of a point $[\cdot]\in H^6(\mathbb{P}^1\times \mathbb{P}^2, \mathbb{Z})$. The class corresponding to $X$ in $H^2(\mathbb{P}^1\times \mathbb{P}^2, \mathbb{Z})$ is $[X]=aD+3B$. Then $$(D\cdot B)|_X=D\cdot B\cdot [X]=D\cdot B\cdot (aD+3B)=3[\cdot].$$ The class corresponding to fiber $E$ in $ H^2(X, \mathbb{Z})$ is $[E]=D|_X$.
Also the Chern class of the line bundle $A$ in $H^2(X, \mathbb{Z})$ equals $(mD+B)|_X$. Thus
\begin{align*}
&((mD+B)\cdot D)|_X=(mD+B)\cdot D[X]=(mD+B)D(aD+3B)=3[\cdot],
\\&(A\cdot A)|_X=A^2\cdot[X]=(mD+B)^2\cdot(aD+3B)=(6m+a)[\cdot].
\end{align*}
Therefore the intersection number $A\cdot E=3$ and $A^2=6m+a$.
\end{proof}

Let $A=\mathcal{O}_X(m,1)$. If $m\geq \frac{3\beta}{2}$, then $A^2=6m+a\geq6\cdot\frac{3\beta}{2}+\beta=10\beta$. By Remark \ref{Euleromega1}, it is possible that Bott vanishing holds for $(X, A)$. We will give a criterion for when Bott vanishing holds for $(X, A)$ if $m$ is big enough.

\begin{proposition}\label{H1=0r=3}
Let $X$ be the elliptic surface $X_{a,3}\subset Z=\mathbb{P}^1\times \mathbb{P}^2$ with elliptic fibration $\pi:X_{a,3}\rightarrow \mathbb{P}^1$ and $A$ be the ample line bundle $\mathcal{O}_X(m,1)$ on $X$, where $a\geq1$ and $m\geq11\beta-1$. Then $H^1(X,\Omega_{X}^1\otimes A)= 0$ if and only if $\pi$ has no fibers of type $\mathrm{IV}$.
\end{proposition}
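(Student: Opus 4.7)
The plan is to mimic the argument for Proposition \ref{H1=0r=2} and reduce the statement to a direct application of Theorem \ref{Thr>1} with $r = 3$. The two hypotheses I need to verify are: first, that $A - \beta E$ is nef and big; and second, that $h^0(L) - h^0(L - E) = 3$, where $L = A - (11\beta - 1)E$. Once these are in hand, Theorem \ref{Thr>1} immediately converts the vanishing of $H^1(X, \Omega_X^1 \otimes A)$ into the criterion that $\pi$ has no fiber of type $\mathrm{IV}$, since the $r=3$ obstruction identified there is exactly a type $\mathrm{IV}$ fiber.

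For the first hypothesis, I observe that $\beta = a$, and the restriction of $\mathcal{O}_Z(n,1)$ to $X$ via the closed immersion $i\colon X \hookrightarrow Z = \mathbb{P}^1 \times \mathbb{P}^2$ is ample as soon as $n \geq 1$, because $\mathcal{O}_Z(n,1)$ is ample on $Z$ and $i$ is a closed immersion. Since $m \geq 11\beta - 1$ and $\beta \geq 1$, I have $m - \beta \geq 10\beta - 1 \geq 1$, so $A - \beta E = \mathcal{O}_X(m - \beta, 1)$ is ample and therefore nef and big.

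For the second hypothesis, I will compute $h^0(L)$ and $h^0(L - E)$ by pulling back to $Z$ via the short exact sequence
\begin{equation*}
0 \to \mathcal{O}_Z(-a, -3) \to \mathcal{O}_Z \to i_*\mathcal{O}_X \to 0,
\end{equation*}
tensored with $\mathcal{O}_Z(m - 11\beta + 1, 1)$ and $\mathcal{O}_Z(m - 11\beta, 1)$ respectively. The twist of the kernel lives in $\mathcal{O}_Z(\star, -2)$, and by the K\"unneth formula together with $H^j(\mathbb{P}^2, \mathcal{O}(-2)) = 0$ for all $j$, every cohomology group of this kernel sheaf vanishes. Hence $h^0$ on $X$ equals $h^0$ on $Z$, and another K\"unneth computation gives $h^0(L) = 3(m - 11\beta + 2)$ and $h^0(L - E) = 3(m - 11\beta + 1)$ (both well-defined since $m - 11\beta \geq -1$). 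Subtracting yields $h^0(L) - h^0(L - E) = 3 = r$, as needed.

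There is no genuinely hard step here: the only place to be careful is checking that the exponents in the K\"unneth computation are in the range where the expected dimension formulas hold, which is ensured by the hypothesis $m \geq 11\beta - 1$. After these verifications, Theorem \ref{Thr>1} applied to $r = 3$ directly yields the equivalence: $H^1(X, \Omega_X^1 \otimes A) = 0$ if and only if $\pi$ has no fibers of type $\mathrm{IV}$.
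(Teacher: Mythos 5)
Your proposal is correct and follows essentially the same route as the paper: verify that $A-\beta E$ is nef and big, compute $h^0(L)-h^0(L-E)=3$ via the structure sequence of $X\subset Z$ and K\"unneth (using that $\mathcal{O}_{\mathbb{P}^2}(-2)$ has no cohomology), and then invoke Theorem \ref{Thr>1} with $r=A\cdot E=3$ from Lemma \ref{A.E=3}. The only (harmless) difference is that you get nefness and bigness of $A-\beta E$ directly from ampleness of $\mathcal{O}_X(m-\beta,1)$, whereas the paper argues through $L=A-(11\beta-1)E$ being nef and big, including the boundary case $m=11\beta-1$.
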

\begin{proof}
A fiber of $\pi$ corresponds to the line bundle $E=\pi^*(\mathcal{O}_{\mathbb{P}^1}(m))$. We consider the line bundle $L=A-(11\beta-1)E=\mathcal{O}_X(m-(11\beta-1),1).$
If $m>11\beta-1$, then $L$ is ample, thus is nef and big. If $m=11\beta-1$, then $L=\mathcal{O}_X(0,1)=i^*q^*\mathcal{O}_{\mathbb{P}^2}(1)$ is nef. By Lemma \ref{A.E=3}, we get $$L^2=A^2-6(11\beta-1)=6m+a-6(11\beta-1)=a\geq1.$$ Thus $L$ is nef and big. The line bundle $A-\beta E=L+(11\beta-2)E$ is also nef and big.

\smallskip

Now we compute $h^0(L)-h^0(L-E)$.
Closed embedding of $X\hookrightarrow Z$ induces the exact sequence
\begin{equation}\label{OX}
0\rightarrow\mathcal{O}(-a,-3)\rightarrow\mathcal{O}_Z\rightarrow \mathcal{O}_X\rightarrow 0.
\end{equation}
Since $\beta=a$, we tensor this short exact sequence by $L$ to get
$$0\rightarrow\mathcal{O}_{Z}(m-(12\beta-1),-2)\rightarrow\mathcal{O}_{Z}(m-(11\beta-1),1)\rightarrow \mathcal{O}_{X}(m-(11\beta-1),1)\rightarrow 0$$ and consider the induced long exact sequence.
Since $H^i(\mathbb{P}^2, \mathcal{O}_{\mathbb{P}^2}(-2))=0$ for $i\geq 0$, we have $H^i(Z, \mathcal{O}_{Z}(-11\beta+2,1))=0$ for $i=0,1$ by K\"unneth formula. Thus $H^0(X, \mathcal{O}_{X}(m-(11\beta-1),1))$ is isomorphic to $$H^0(Z, \mathcal{O}_{Z}(m-(11\beta-1),1))=H^0(\mathbb{P}^1, \mathcal{O}_{\mathbb{P}^1}(m-(11\beta-1)) )\otimes H^0(\mathbb{P}^2, \mathcal{O}_{\mathbb{P}^2}(2)).$$ Hence $h^0(L)=3(m-(11\beta-1)+1)$. Tensoring the short exact sequence (\ref{OX}) by $L-E=\mathcal{O}_X(m-(11\beta-1)-1,1)$, we get $h^0(L)=3(m-11\beta+1)$ with the similar arguments. Therefore $h^0(L)-h^0(L-E)=3$.

\smallskip

By Lemma \ref{A.E=3}, the intersection number $A\cdot E=3$. Then Theorem \ref{Thr>1} shows that $H^1(X,\Omega_{X}^1\otimes A)\neq 0$ if and only if $\pi$ has a fiber of type $\mathrm{IV}$. This implies the conclusion.
\end{proof}

\begin{proposition}\label{r=3bott}
Let $X$ be the elliptic surface $X_{a,3}\subset Z=\mathbb{P}^1\times \mathbb{P}^2$ with elliptic fibration $\pi:X_{a,3}\rightarrow \mathbb{P}^1$ and $A$ be the ample line bundle $\mathcal{O}_X(m,1)$ on $X$, where $a\geq1$ and $m\geq11\beta-1$. Then $(X, A)$ satisfies Bott vanishing if and only if $\pi$ has no fibers of type $\mathrm{IV}$.
\end{proposition}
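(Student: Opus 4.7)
The plan is to reduce the statement to the previously established Proposition \ref{H1=0r=3}, exactly in parallel with the $r=2$ case proved above. By Proposition \ref{pairbott}, once I verify that the line bundle $A-K_X=A-(\beta-2)E$ is nef and big, Bott vanishing for the pair $(X,A)$ becomes equivalent to the single vanishing $H^1(X,\Omega_X^1\otimes A)=0$, and then Proposition \ref{H1=0r=3} gives the desired criterion.

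So the only real work is to check that $A-(\beta-2)E$ is nef and big. By Lemma \ref{Kx}, the canonical bundle on $X=X_{a,3}$ is $K_X=(\beta-2)E$, where $\beta=a$ (computed in the discussion preceding Lemma \ref{A.E=3}). A fiber class $E$ corresponds to the restriction of $\mathcal{O}_Z(1,0)$ to $X$, so
\[
A-(\beta-2)E \;=\; \mathcal{O}_X(m-\beta+2,\,1).
\]
The hypotheses $\beta\geq 1$ and $m\geq 11\beta-1$ give $m-\beta+2\geq 10\beta+1>0$, and therefore $\mathcal{O}_Z(m-\beta+2,1)$ is ample on $Z=\mathbb{P}^1\times\mathbb{P}^2$. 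Restricting to the closed subscheme $X$ preserves ampleness, so $A-K_X$ is ample, hence nef and big.

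With this in hand, Proposition \ref{pairbott} applies and tells me that $(X,A)$ satisfies Bott vanishing if and only if $H^1(X,\Omega_X^1\otimes A)=0$. Proposition \ref{H1=0r=3} (whose hypotheses $a\geq 1$ and $m\geq 11\beta-1$ coincide with those of the current statement) then characterizes this vanishing as the absence of fibers of type $\mathrm{IV}$ for $\pi$, completing the proof. There is no real obstacle — the entire argument is a short bookkeeping step assembling Lemma \ref{Kx}, Proposition \ref{pairbott}, and Proposition \ref{H1=0r=3}.
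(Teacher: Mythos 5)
Your proposal is correct and follows essentially the same route as the paper: verify that $A-K_X=\mathcal{O}_X(m-\beta+2,1)$ is ample (hence nef and big) using $\beta=a\geq1$ and $m\geq 11\beta-1$, then combine Proposition \ref{pairbott} with Proposition \ref{H1=0r=3}. No gaps.
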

\begin{proof}


Since $\beta=a\geq 1$, then $m\geq11\beta-1$ imples $m-\beta+2>0$. Hence $A-K_X=A-(\beta-2)E=\mathcal{O}_X(m-\beta+2,1)$ is ample, so is nef and big.
The conclusion follows from Proposition \ref{pairbott} and Proposition \ref{H1=0r=3}.
\end{proof}

\subsection{$A\cdot E=4$}\label{e.g.r=4}
{\bf Elliptic fibrations with $r=A\cdot E=4$ for an ample line bundle $A$ on the elliptic surfaces, where $E$ is a fiber.}

\smallskip

Let surface $X$ be the complete intersection of two hypersurfaces $X_{a, 2}$ and $X_{b, 2}$ in $Z=\mathbb{P}^1\times \mathbb{P}^3$, where $a$ and $b$ are positive integers.
Let $p,q$ be the projection from $Z$ to $\mathbb{P}^1$ and $\mathbb{P}^3$ respectively. Let $\pi$ be the restriction of $p$ to $X$. Every fiber $E$ of $\pi$ is a complete intersection of two hypersurfaces $S_1$ and $S_2$ in $\mathbb{P}^3$ of degree $2$.  The canonical bundle of $S_1$ is $$\omega_{S_1}=(\omega_{\mathbb{P}^3}\otimes\mathcal{O}(S_1))|_{S_1}=(\mathcal{O}_{\mathbb{P}^3}(-4)\otimes\mathcal{O}_{\mathbb{P}^3}(2))|_{S_1}=\mathcal{O}_{S_1}(-2).$$ Then the canonical bundle of $E$ is trivial since $$\omega_E=(\omega_{S_1}\otimes\mathcal{O}(E))|_E=\big(\mathcal{O}_{S_1}(-2)\otimes\mathcal{O}_{S_1}(2)\big)|_E=\mathcal{O}_E.$$ Thus $E$ is an elliptic curve.
Therefore $\pi:X\rightarrow \mathbb{P}^1$ is an elliptic fibration. The canonical line bundle $\omega_{X_{a,2}}$ equals $$(\omega_Z\otimes\mathcal{O}(X_{a,2}))|_{X_{a,2}}=\big(\mathcal{O}_Z(-2,-4)\otimes \mathcal{O}_Z(a,2)\big)|_{X_{a,2}}=\mathcal{O}_{X_{a,2}}(a-2,-2).$$ Then the canonical line bundle of $X$ is $$\omega_X=\big(\omega_{X_{a,2}}\otimes \mathcal{O}(X)\big)|_{X}=\big(\mathcal{O}_{X_{a,2}}(a-2,-2)\otimes \mathcal{O}_{X_{a,2}}(b,2)\big)|_{X}=\mathcal{O}_X(a+b-2,0)$$ which equals $\pi^*(\mathcal{O}_{\mathbb{P}^1}(a+b-2))$.

\smallskip

If $a+b=2$, the canonical bundle $\omega_X$ is trivial. Hence $X$ is a $K3$ elliptic surface with $\kappa=0$. The Bott vanishing for $K3$ elliptic surfaces is considered in \cite{BTbott}.
In this paper, we are more interested in the case that $a+b\geq3$ in which $X$ has Kodaira dimension $\kappa=1$.
Let $\beta=\chi(\mathcal{O}_X)$ be the Euler characteristic of the structure sheaf of $X$.
With the similar computation in Section \ref{e.g.r=3},  we have $\beta=a+b$.
This matches with the result that $\omega_X=\pi^*(\mathcal{O}_{\mathbb{P}^1}(\beta-2))$ in Lemma \ref{Kx}.

\smallskip

Let $A$ be the ample line bundle $\mathcal{O}_X(m,1)=i^*p^*\mathcal{O}_{\mathbb{P}^1}(m)\otimes i^*q^*\mathcal{O}_{\mathbb{P}^3}(1)$, where $i$ is the embedding from $X$ to $Z=\mathbb{P}^1\times \mathbb{P}^3$. The following lemma computes the intersection number of the ample line bundle with a fiber of $\pi$.

\begin{lemma}\label{A.E=4}
Let $E$ be any fiber of $\pi:X\rightarrow \mathbb{P}^1$. The intersection number $A\cdot E=4$ and $A^2=2a+2b+8$, where $A=\mathcal{O}_X(m,1)$.
\end{lemma}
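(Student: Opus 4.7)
The plan is to mirror the approach of Lemma \ref{A.E=3}, doing all intersection computations inside the cohomology ring of the ambient product $Z=\mathbb{P}^1\times\mathbb{P}^3$ and then restricting (equivalently, pushing forward after intersecting with the fundamental class $[X]\in H^*(Z,\mathbb{Z})$). Let $D=p^*c_1(\mathcal{O}_{\mathbb{P}^1}(1))$ and $H=q^*c_1(\mathcal{O}_{\mathbb{P}^3}(1))$. Then $H^*(Z,\mathbb{Z})$ is generated by $D$ and $H$ subject to $D^2=0$, $H^4=0$, and $D\cdot H^3=[\mathrm{pt}]$.

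First I would write down the fundamental class of $X$ in $Z$. Since $X$ is the transverse complete intersection of the hypersurfaces $X_{a,2}$ and $X_{b,2}$, its class is the product
\begin{equation*}
[X]=(aD+2H)(bD+2H)=2(a+b)DH+4H^{2},
\end{equation*}
where the $abD^2$-term vanishes because $D^2=0$. Next, since $A=\mathcal{O}_X(m,1)$ and $E$ is a fiber of $\pi=p|_X$, the corresponding classes in $H^2(X,\mathbb{Z})$ are the restrictions $[A]=(mD+H)|_X$ and $[E]=D|_X$.

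Then I would compute the two intersection numbers by expanding $(mD+H)\cdot D\cdot [X]$ and $(mD+H)^2\cdot [X]$ in $H^*(Z,\mathbb{Z})$ and reducing modulo $D^2=0$ and $H^4=0$. For the first,
\begin{equation*}
(mD+H)\cdot D\cdot\bigl(2(a+b)DH+4H^{2}\bigr)=DH\cdot\bigl(2(a+b)DH+4H^{2}\bigr)=4\,DH^{3}=4[\mathrm{pt}],
\end{equation*}
giving $A\cdot E=4$. For the second, expand $(mD+H)^2=2mDH+H^{2}$ (dropping $m^2D^2=0$), multiply by $2(a+b)DH+4H^{2}$, and collect the $DH^{3}$ coefficient; all $D^2H^2$ contributions vanish.

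There is no real obstacle here beyond careful bookkeeping of the two relations $D^2=0$ and $H^4=0$; the computation is the same style as the $\mathbb{P}^1\times\mathbb{P}^2$ calculation done in Lemma \ref{A.E=3}. The only thing worth double-checking is that every $m$-dependence is correctly tracked, since one naively expects $A^2$ to depend on $m$ (as in the $r=2,3$ cases $A^2=4m$ and $A^2=6m+a$); the final answer should be verified against the stated formula, and if a discrepancy in the $m$-term appears it should be reconciled with the statement before concluding.
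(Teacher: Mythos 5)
Your proposal is correct and follows essentially the same route as the paper's own proof: compute $[X]=(aD+2B)(bD+2B)$ in $H^*(\mathbb{P}^1\times\mathbb{P}^3,\mathbb{Z})$ and evaluate $(mD+B)\cdot D\cdot[X]$ and $(mD+B)^2\cdot[X]$ using $D^2=0$, $B^4=0$, $D\cdot B^3=[\mathrm{pt}]$. The discrepancy you flagged is real: the computation yields $A^2=2a+2b+8m$, exactly as in the paper's proof, so the ``$8$'' in the lemma statement is a typo for ``$8m$'' (the same slip occurs in the statement of Lemma \ref{A.E=3}, where $6+a$ should be $6m+a$).
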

\begin{proof}
Let $D, B\in H^2(\mathbb{P}^1\times \mathbb{P}^3, \mathbb{Z})$ be the first Chern class of $p^*(\mathcal{O}_{\mathbb{P}^1}(1))$ and $q^*(\mathcal{O}_{\mathbb{P}^3}(1))$ respectively. Then the cohomology ring $ H^*(\mathbb{P}^1\times \mathbb{P}^2, \mathbb{Z})$ is generated by $D, B$ with relations $D^2=0$, $B^4=0$ and $D\cdot B^3$ equals the class of a point $[\cdot]\in H^8(\mathbb{P}^1\times \mathbb{P}^3, \mathbb{Z})$. The class corresponding to $X$ in $H^4(\mathbb{P}^1\times \mathbb{P}^3, \mathbb{Z})$ is $[X]=[X_{a,2}]\cdot[X_{a,2}]=(aD+2B)\cdot(bD+2B)$. Then $$(D\cdot B)|_X=D\cdot B\cdot [X]=D\cdot B\cdot (aD+2B)\cdot(bD+2B)=4[\cdot].$$ The class corresponding to fiber $E$ in $ H^2(X, \mathbb{Z})$ is $[E]=D|_X$.
Also the Chern class of the line bundle $A$ in $H^2(X, \mathbb{Z})$ equals $(mD+B)|_X$. Thus
\begin{align*}
&((mD+B)\cdot D)|_X=(mD+B)\cdot D\cdot[X]=B\cdot D[X]=4[\cdot],
\\&(A\cdot A)|_X= (mD+B)^2|_X=2m(B\cdot D)|_X+B^2|_X=8m[\cdot]+ B^2|_X,
\\& B^2|_X=B^2\cdot[X]=B^2\cdot(aD+2B)\cdot(bD+2B))=(2a+2b)[\cdot].
\end{align*}
Therefore the intersection number $A\cdot E=4$ and $A^2=2a+2b+8m$.
\end{proof}

Let $A=\mathcal{O}_X(m,1)$. If $m\geq a+b$, then $A^2\geq 10(a+b)=10\beta$. Then by Remark \ref{Euleromega1}, it is possible that Bott vanishing holds for $(X, A)$. We will show Bott vanishing holds for $(X, A)$ if $m$ is big enough.

\begin{proposition}\label{H1=0r=4}
Let $X$ be an elliptic surface constructed as the complete intersection of two hypersurfaces $X_{a, 2}$ and $X_{b, 2}$ in $Z=\mathbb{P}^1\times \mathbb{P}^3$ and $A$ be the ample line bundle $\mathcal{O}_X(m,1)$ on $X$,, where $a$ and $b$ are positive integers. If $m> 12(a+b)-2$, then $H^1(X,\Omega_{X}^1\otimes A)$ is zero.
\end{proposition}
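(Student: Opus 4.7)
The plan is to derive this directly from Corollary \ref{cor>1}. By Lemma \ref{A.E=4}, the intersection number $r := A\cdot E = 4$, and Corollary \ref{cor>1} characterizes the non-vanishing of $H^1(X,\Omega_X^1\otimes A)$ in terms of existence of singular fibers of types $\mathrm{II}$, $\mathrm{III}$, or $\mathrm{IV}$ only for $r\in\{1,2,3\}$. Since $r = 4$ falls outside these three cases, the ``if and only if'' condition in that corollary becomes vacuously false, giving $H^1(X,\Omega_X^1\otimes A) = 0$ immediately.

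The only substantive step is to verify the hypothesis of Corollary \ref{cor>1}, namely that $A - (12\beta - 2)E$ is nef and big. A fiber of $\pi$ is cut out by the pullback of a hyperplane on $\mathbb{P}^1$, so $E = i^*p^*\mathcal{O}_{\mathbb{P}^1}(1) = \mathcal{O}_X(1,0)$. Combining this with the identity $\beta = a + b$ established earlier yields
$$A - (12\beta - 2)E \;=\; \mathcal{O}_X\bigl(m - 12(a+b) + 2,\;1\bigr).$$
The hypothesis $m > 12(a+b) - 2$ gives $m - 12(a+b) + 2 \geq 1$, so this line bundle is the restriction to $X$ of $\mathcal{O}_Z(k,1)$ with $k \geq 1$, and the latter is ample on $Z = \mathbb{P}^1 \times \mathbb{P}^3$. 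Since the restriction of an ample line bundle to a closed subvariety is ample, $A - (12\beta - 2)E$ is ample, and in particular nef and big.

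With the hypothesis verified, Corollary \ref{cor>1} applies and, as observed above, delivers the desired vanishing. There is no real obstacle beyond the observation that $r = 4$ places us outside all exceptional singular-fiber cases; everything else is a one-line ampleness check on $\mathbb{P}^1 \times \mathbb{P}^3$.
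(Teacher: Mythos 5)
Your proof is correct and follows essentially the same route as the paper: verify that $A-(12\beta-2)E=\mathcal{O}_X(m-12(a+b)+2,\,1)$ is ample (hence nef and big) under the hypothesis $m>12(a+b)-2$, and then apply Corollary \ref{cor>1} with $r=A\cdot E=4$ from Lemma \ref{A.E=4}, where none of the exceptional fiber-type conditions can occur, forcing the vanishing.
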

\begin{proof}
Since $m> 12(a+b)-2$ and $\beta=a+b$, the line bundle $A-(12\beta-2)=\mathcal{O}_X(m-(12\beta-2),1)=i^*p^*\mathcal{O}_{\mathbb{P}^1}(m-(12\beta-2))\otimes i^*q^*\mathcal{O}_{\mathbb{P}^2}(1)$ is ample, thus is nef and big. Since $\beta=a+b\geq1$, the cohomology space $H^1(X,\Omega_{X}^1\otimes A)=0$ by Corollary \ref{cor>1}.
\end{proof}

\begin{proposition}\label{r=4bott}
Let $X$ be an elliptic surface constructed as the complete intersection of two hypersurfaces $X_{a, 2}$ and $X_{b, 2}$ in $Z=\mathbb{P}^1\times \mathbb{P}^3$ and $A$ be the ample line bundle $\mathcal{O}_X(m,1)$ on $X$,, where $a$ and $b$ are positive integers. If $m> 12(a+b)-2$, then $(X, A)$ satisfies Bott vanishing.
\end{proposition}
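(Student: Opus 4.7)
The plan is to follow the template established in Propositions \ref{H1=0r=2} and \ref{r=3bott}: reduce Bott vanishing for the pair $(X,A)$ to the vanishing of $H^1(X,\Omega^1_X\otimes A)$ via Proposition \ref{pairbott}, and then invoke Proposition \ref{H1=0r=4} to finish.

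First, I would check the hypothesis of Proposition \ref{pairbott}, namely that $A-K_X = A-(\beta-2)E$ is nef and big. By Lemma \ref{Kx} the canonical bundle is $K_X=(\beta-2)E$ with $\beta=a+b\geq 2$, so $A-K_X=\mathcal{O}_X(m-\beta+2,1)$. The hypothesis $m>12(a+b)-2=12\beta-2$ gives $m-\beta+2>11\beta>0$, so this line bundle is the restriction to $X$ of the ample bundle $\mathcal{O}_Z(m-\beta+2,1)$ on $Z=\mathbb{P}^1\times\mathbb{P}^3$, hence is ample on $X$ and in particular nef and big. Proposition \ref{pairbott} then reduces the problem to showing $H^1(X,\Omega^1_X\otimes A)=0$.

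For the remaining vanishing, the hypothesis $m>12(a+b)-2=12\beta-2$ is exactly what Proposition \ref{H1=0r=4} requires, so that proposition yields $H^1(X,\Omega^1_X\otimes A)=0$ immediately and the conclusion follows. There is no real obstacle here; the only thing to be careful about is matching up the numerics (in particular that $\beta=a+b$, as computed in the discussion preceding Lemma \ref{A.E=4}) so that the strict inequality $m>12\beta-2$ gives both ampleness of $A-K_X$ and the hypothesis of Proposition \ref{H1=0r=4}.
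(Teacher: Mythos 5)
Your proposal is correct and follows essentially the same route as the paper: verify that $A-K_X=\mathcal{O}_X(m-\beta+2,1)$ is ample (hence nef and big) using $\beta=a+b$ and $m>12\beta-2$, then conclude via Proposition \ref{pairbott} together with Proposition \ref{H1=0r=4}. No gaps.
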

\begin{proof}


Since $\beta=a+b\geq 1$, then $m>12(a+b)-2=12\beta-2$ imples $m-\beta+2>0$. Hence $A-K_X=A-(\beta-2)E=\mathcal{O}_X(m-\beta+2,1)$ is ample, so is nef and big. 
The conclusion follows from Proposition \ref{pairbott} and Proposition \ref{H1=0r=4}.
\end{proof}

\end{document}